\makeatletter\@addtoreset{equation}{section}\makeatother
\makeatletter\@addtoreset{figure}{section}\makeatother
\makeatletter\@addtoreset{table}{section}\makeatother
\makeatletter\@addtoreset{equation}{section}\makeatother
\makeatletter\@addtoreset{figure}{section}\makeatother
\makeatletter\@addtoreset{table}{section}\makeatother
\newtheorem{theorem}{Theorem}[section]
\newtheorem{prop}[theorem]{Proposition}
\newtheorem{lemma}[theorem]{Lemma}
\newtheorem{theoremL}{Theorem}
\newtheorem{conjL}{Conjecture}
\newcommand{\R}{{\mathbb R}}
\newcommand{\Z}{{\mathbb Z}}
\newcommand{\op}[1]{\!\!\mathop{\rm ~#1}\nolimits}
\newenvironment{remark}{\refstepcounter{theorem}\par\medskip\noindent{\bf
Remark~\thetheorem~~}}{\unskip\nobreak\hfill\hbox{ $\oslash$}\par\bigskip}
\newenvironment{question}{\refstepcounter{theorem}\par\medskip\noindent{\bf
Question~\thetheorem~~}}{\unskip\nobreak\hfill\hbox{ $\oslash$}\par\bigskip}
\newenvironment{example}{\refstepcounter{theorem}\par\medskip\noindent{\bf
Example~\thetheorem~~}}{\unskip\nobreak\hfill\hbox{ $\oslash$}\par\bigskip}
\renewcommand{\geq}{\geqslant}
\renewcommand{\leq}{\leqslant}
\date{\today}
\begin{document}

\title[Fermat and the number of fixed points of periodic flows]{Fermat and the number of fixed points of periodic flows}

\author[L. Godinho]{Leonor Godinho}
\thanks{LG and SS were partially supported by Funda\c{c}\~ao para a Ci\^encia e Tecno\-logia (FCT / Portugal) through projects EXCL/MAT-GEO/0222/2012 and POCTI/MAT/117762/2010.}
\address{Departamento de Matem\'{a}tica, Centro de An\'{a}lise Matem\'{a}tica, Geometria e Sistemas Din\^{a}micos-LARSYS, Instituto Superior T\'ecnico, Av. Rovisco Pais 1049-001 Lisbon, Portugal}
\email{lgodin@math.ist.utl.pt}

\author[\'A. Pelayo]{ \'Alvaro Pelayo}
\thanks{ AP was partially
supported by an 
NSF CAREER DMS-1055897.}
 \address{ Department of Mathematics\\
 Washington University in St Louis\\
One Brookings Drive, Campus Box 1146\\
St Louis, MO 63130-4899, USA.}
\email{apelayo@math.wustl.edu}

\author[S. Sabatini]{Silvia Sabatini}
\thanks{SS was partially supported by the Funda\c{c}\~ao para a Ci\^encia e Tecno\-logia (FCT / Portugal) through the postdoctoral fellowship SFRH/BPD/86851/2012}
\address{Departamento de Matem\'{a}tica, Centro de An\'{a}lise Matem\'{a}tica, Geometria e Sistemas Din\^{a}micos-LARSYS, Instituto Superior T\'ecnico, Av. Rovisco Pais 1049-001 Lisbon, Portugal}
\email{sabatini@math.ist.utl.pt}

\begin{abstract} 
We obtain a general lower bound for the number of fixed points of a circle action on a compact almost complex manifold $M$
of dimension $2n$ with nonempty fixed point set,
provided the Chern number $c_1c_{n-1}[M]$ vanishes. The proof combines techniques originating in equivariant K\--theory with celebrated number theory results on polygonal numbers, 
introduced by Pierre de Fermat. 
This lower bound confirms in many cases a conjecture of Kosniowski from 1979, and is better than existing bounds for some symplectic actions.
Moreover, if the fixed point set is discrete, we prove divisibility properties for the number of fixed points,
improving similar statements obtained by Hirzebruch in 1999. 
Our results  apply, for example, to a class of manifolds which do not support any Hamiltonian circle action, namely those for which the first Chern class is torsion. 
This includes, for instance, all symplectic Calabi Yau manifolds. 
\end{abstract}

\maketitle

\section{Introduction}\label{sum}

Finding the minimal number of fixed points of a circle action on a compact manifold is one of the most pressing unsolved problems in equivariant geometry\footnote{In the terminology of dynamical systems, circle actions are regarded as
\emph{periodic flows} and the fixed points of the action correspond to the equilibrium points
of the flow.}.
It is deeply connected with the question of whether there exists a symplectic non-Hamiltonian $S^1$\--action
on a compact symplectic manifold with nonempty and discrete fixed point set, which is probably the hardest and most
interesting open problem
in the subject. Much of the activity concerning this question 
originated in a result by T. Frankel~\cite{Frankel1959} for K\"ahler manifolds, in which he showed that
a K\"ahler $S^1$\--action on a compact K\"ahler manifold $M$ is Hamiltonian if and only if it has fixed points. In this case,
this implies that the action has at least
$\frac{1}{2}\dim M +1$ fixed points, since they coincide with  
the critical points of the corresponding Hamiltonian function (a perfect Morse-Bott function).
For the larger class of unitary
manifolds, a conjecture in this direction was made  by Kosniowski \cite{Ko} in 1979 and is open in general.

\begin{conjL}[Kosniowski '79]\label{K1979}
Let $M$ be a $2n$-dimensional compact unitary $S^1$-manifold with isolated fixed points. 
If $M$ does not bound equivariantly then the number of fixed points is greater than $f(n)$, where $f(n)$ is some linear function.
\end{conjL}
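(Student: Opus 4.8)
\par\medskip\noindent Kosniowski's conjecture is still open in full generality, so what I would aim for is a proof under the additional hypothesis that the Chern number $c_1c_{n-1}[M]$ vanishes --- a condition satisfied, for instance, whenever $c_1$ is torsion, and in particular by every symplectic Calabi--Yau manifold --- which should yield a concrete linear $f(n)$ in all those cases. First I would reduce to the case of \emph{isolated} fixed points: a non-isolated fixed point component can be handled by an equivariant blow-up, or by passing to a subcircle acting with isolated components and estimating separately. At an isolated fixed point $p$ the isotropy representation on $T_pM$ decomposes as $\bigoplus_{j=1}^n L_{w_j(p)}$ with nonzero integer weights $w_j(p)$; record the \emph{index} $\lambda_p=\#\{j:w_j(p)<0\}$ and set $N_k=\#\{p\in M^{S^1}:\lambda_p=k\}$, so that the number of fixed points is $N=\sum_{k=0}^n N_k=c_n[M]=\chi(M)$. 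Reversing the circle action changes the sign of every weight, giving the symmetry $N_k=N_{n-k}$; and Atiyah--Bott--Berline--Vergne localization applied to the rigid Hirzebruch $\chi_y$-genus gives the identity $\chi_y(M)=\sum_{k=0}^n N_k(-y)^k$.

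Next I would extract linear constraints on the $N_k$ from cohomology. Evaluating the $\chi_y$-identity and its first two derivatives at $y=-1$ and using the Libgober--Wood formulas for the corresponding Chern numbers yields
$$\sum_{k=0}^n k\,N_k=\tfrac{n}{2}\,N,\qquad \sum_{k=0}^n k(k-1)\,N_k=\frac{3n^2-5n}{12}\,N+\frac16\,c_1c_{n-1}[M].$$
Under the hypothesis $c_1c_{n-1}[M]=0$ the second identity becomes a relation purely between $N$ and $\sum_k k(k-1)N_k$; combined with the first and with $\sum_k N_k=N$ it forces the ``index distribution'' $k\mapsto N_k$ to have mean $n/2$ and variance exactly $n/12$, hence to be sharply concentrated near $n/2$. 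To pin down $N$ itself I would then bring in further rigidity: localization of $\chi_y$-genera twisted by equivariant line bundles (equivalently, rigidity of a family of equivariant $K$-theory classes, in the style used to prove the Atiyah--Hirzebruch and Kosniowski vanishing theorems) produces additional linear identities $\sum_k P_r(k)\,N_k=0$ with explicit polynomials $P_r$; eliminating variables and using the symmetry $N_k=N_{n-k}$ together with the integrality $N_k\in\ZM_{\geq 0}$ should reduce the whole system to an equation expressing $n$ as a sum of a number of \emph{polygonal numbers} controlled by $N$.

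The final, number-theoretic step is where Fermat enters. Fermat's polygonal number theorem --- together with its refinements by Cauchy, Legendre and P\'epin describing exactly which integers are sums of a prescribed number of $m$-gonal numbers --- determines precisely the $n$ admitting a representation of the type forced above. Reading this contrapositively, if $N$ were smaller than an explicit linear function $f(n)$, then $n$ could not be so represented, contradicting the identity coming from localization; hence $N>f(n)$, which is Kosniowski's bound for all $M$ with $c_1c_{n-1}[M]=0$. The same system of identities, read modulo small integers, also yields divisibility properties of $N$ in the isolated case, refining Hirzebruch's 1999 statements.

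The main obstacle I anticipate is the passage in the second and third paragraphs: one must produce \emph{enough} independent rigidity identities, of exactly the right algebraic shape, for the elimination to collapse to a genuine ``sum of polygonal numbers'' equation rather than a mere quadratic inequality, and one must ensure that the resulting count of summands is controlled by $N$ and not by some larger weighted count. Secondary difficulties are verifying that the extreme strata satisfy $N_0\geq 1$ --- or identifying the correct replacement in the non-Hamiltonian case, where a fixed point with all positive weights need not exist --- and making the reduction to isolated fixed points lossless enough that the final bound concerns the actual number of fixed point \emph{components}. Once the geometry is funnelled into an arithmetic statement of Fermat type the number theory is classical; forcing it into exactly that form is the crux.
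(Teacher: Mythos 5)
First, be aware that the statement you were given is Kosniowski's \emph{conjecture}: the paper does not prove it and explicitly leaves it open, confirming it only --- under the additional hypothesis $c_1c_{n-1}[M]=0$ --- for the finite list of dimensions in Proposition~\ref{prop:2}. Your first two paragraphs do reconstruct the paper's actual mechanism: the identity $c_1c_{n-1}[M]=\sum_i N_i\bigl(6i(i-1)+\tfrac{5n-3n^2}{2}\bigr)$ from \cite{GoSa12}, combined with Hattori's symmetry $N_i=N_{n-i}$, is exactly your ``mean $n/2$, variance $n/12$'' pair of constraints, and the paper turns these, together with $N_i\in\Z_{\geq 0}$, into a minimization problem solved by the Lagrange--Legendre--Euler--Gauss--Ewell theorems. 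But no further rigidity identities are used, and none are available: the paper notes that $c_n$ and $c_1c_{n-1}$ are the \emph{only} Chern numbers with known expressions in the $N_i$ alone, so the additional relations $\sum_k P_r(k)N_k=0$ on which your third paragraph relies have no source.

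The genuine gap is your concluding claim that the arithmetic forces $N>f(n)$ for a \emph{linear} $f$. It cannot. For $n=2m$ the constraint collapses to $N=\frac{12}{m}\sum_{k\geq 1}k^2N_{m-k}$ together with $\sum_{k\geq 1}N_{m-k}\leq \frac{6}{m}\sum_{k\geq 1}k^2N_{m-k}$, i.e.\ a suitable multiple of $m/\gcd(m,12)$ must be a sum of a number of squares controlled by $N$. Since Lagrange's theorem guarantees that \emph{every} positive integer is a sum of at most four squares (and Gauss's theorem gives at most three triangular numbers in the odd case), this condition is always satisfiable with a number of summands bounded by an absolute constant independent of $n$; hence the lower bound extractable from this single identity is uniformly bounded, namely $\mathcal{B}(n)\leq 12$ for $n$ even and $\mathcal{B}(n)\leq 24$ for $n$ odd. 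For instance $\mathcal{B}(12)=2<7=\lfloor 12/2\rfloor+1$, so the method does not establish Kosniowski's bound for $\dim M=24$ even assuming $c_1c_{11}[M]=0$. The correct output of this line of argument is Theorem~\ref{thm:MAIN} (a bounded, arithmetically determined $\mathcal{B}(n)$) together with the divisibility statements of Theorem~\ref{thm:D}, with Kosniowski's conjecture confirmed only in the dimensions of Proposition~\ref{prop:2}; your reading of the final number-theoretic step reverses the direction in which Fermat's theorems cut.
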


In fact, Kosniowski suggested that, most likely, $f(n)$ is $\frac{n}{2}$ since this bound works in low dimensions, leading to the conjecture that the number of fixed points 
on $M$ is at least $\lfloor \frac{n}{2}\rfloor +1$.

Several other lower bounds  were obtained in the literature, by retrieving information from a nonvanishing  Chern number of the manifold. For example,
Hattori \cite{Ha84} showed that,  under a certain technical condition, if $c_1^n[M]$ does not vanish (implying that $c_1$ is not torsion)
then any $S^1$-action on an almost complex manifold preserving the a.c.\ structure
must have at least $n+1$ fixed points (see Theorem~\ref{thm:Hattori}). Since then many other results followed \cite{PeTo2010,LiLi2010,CKP12}; we review these in Section~\ref{s2}.
  
It is therefore natural to study the situation in which the first Chern class is indeed torsion.
In the symplectic case this condition automatically implies that the manifold cannot support any Hamiltonian circle action (see Proposition~\ref{prop:Ham}),
and is, for instance, satisfied by the important family of symplectic Calabi-Yau manifolds, for which we have $c_1=0$.
Since the existence of a symplectic manifold admitting a non-Hamiltonian circle action with discrete fixed point set is still unknown, and there is very little information 
on the required  topological properties of the possible candidates, our results shed some light on this problem.

In this note we make the weaker assumption that $c_1c_{n-1}[M]$ is zero (cf.\ Section \ref{the hypothesis}).
 The choice of this Chern number is motivated by its expression in terms
of numbers of fixed points obtained in \cite[Theorem 1.2]{GoSa12}.
Interestingly, on a compact symplectic manifold of dimension 6,
 a circle action is non-Hamiltonian if and only if $c_1c_2[M]= 0$ (cf. Proposition~\ref{cv}).  

Using  this expression, we show in Theorem~\ref{ab} that, whenever  $c_1c_{n-1}[M]$ vanishes,
a \emph{lower bound} $\mathcal{B}(n)$ for the number of fixed points of a circle action on an almost complex manifold can be obtained from the minimum values of certain 
integer-valued functions restricted to a set of integer points in  a specific hyperplane. 
These minimization problems are then solved in Theorems~\ref{thm:B} and \ref{thm:C}, using celebrated number theory results  on the possible representations of a  positive integer number as a sum of  polygonal numbers (namely  squares and triangular numbers). These were originally stated by Fermat in 1640 and proved by Legendre, Lagrange, Euler, Gauss and  Ewell (see Section \ref{sec:prent}).
The lower bounds obtained  are summarized in the following theorem.

\begin{theoremL}\label{thm:MAIN} 
Let $(M,J)$ be a $2n$-dimensional compact connected almost complex manifold equipped with a $J$-preserving  $S^1$\--action with 
nonempty fixed point set and 
such that $c_1c_{n-1}[M]=0$.
Then the number of fixed points of the $S^1$-action is at least $\mathcal{B}(n)$, where $\mathcal{B}(n)$ is given as follows. 

For \framebox{$n=2m$} and $r:=\gcd(m,12)$, 
\begin{enumerate}
\itemsep2pt \parskip0pt \parsep0pt

\item[{\rm (i)}] if $r=1$ then $\mathcal{B}(n)=12$;
\item[{\rm (ii)}] if $r=2$ then

\begin{tabular}{ll}
$  \bullet \,  \mathcal{B}(n)=6$  & if $n \not\equiv 28 \pmod{32}$, \\
 $\bullet \,  \mathcal{B}(n)=12$ & otherwise;
 \end{tabular}

\item[{\rm (iii)}] if $r=3$ then 

\begin{tabular}{ll}
$\bullet \,  \mathcal{B}(n)= 4$ & if all  prime factors of $\frac{n}{6}$ congruent to $3\pmod 4$ \\ & occur  with even exponent, \\
$\bullet \,  \mathcal{B}(n)= 8$ & otherwise;
 \end{tabular}

\item[{\rm (iv)}] if $r=4$ then 

\begin{tabular}{ll}
 $\bullet \, \mathcal{B}(n)= 3$ & if $\frac{n}{2}$ is a square, \\
 $\bullet \,  \mathcal{B}(n)= 6$  & if $\frac{n}{2}$ is not a square and $n\neq 4^k(8t+7)\, \forall k,t\in  \mathbb{Z}_{\geq0},$ \\
$\bullet \,  \mathcal{B}(n) = 9$ & otherwise;
 \end{tabular}

\item[{\rm (v)}] if $r=6$ then 

\begin{tabular}{ll}
$\bullet \,  \mathcal{B}(n)= 2$ & if $\frac{n}{12}$ is a square, \\
$\bullet \,  \mathcal{B}(n)=4$ & if $\frac{n}{12}$ is not a square and all prime factors of $\frac{n}{6}$ \\ & congruent to   $3\pmod 4$ occur with even exponent, \\
$\bullet \, \mathcal{B}(n)= 6$ & if $\frac{n}{12}$ is not a square,  at least one prime factor of $\frac{n}{6}$ \\ & congruent  to $3\pmod 4$ occurs with an odd exponent  \\  & and $n \not\equiv 28 \pmod{32}$, \\
$\bullet \,  \mathcal{B}(n)= 8$ & otherwise; 
 \end{tabular} 

\item[{\rm (vi)}] if $r=12$ then 

\begin{tabular}{ll}
$\bullet \, \mathcal{B}(n)= 2$ & if $\frac{n}{12}$  is a square, \\
$\bullet \, \mathcal{B}(n)=3$ &  if $\frac{n}{2}$ is a square, \\
$\bullet \, \mathcal{B}(n)= 4$  & if none of the above holds and all prime factors  of $\frac{n}{6}$ \\ & congruent to $3\pmod 4$ occur with even exponent, \\
$\bullet \,\mathcal{B}(n)=6$  & if none of the above holds  and $n\neq  4^k(8t+7)\, \forall k,t\!\in \!\mathbb{Z}_{\geq0}$,\\
$\bullet \, \mathcal{B}(n)= 7$ & otherwise.
\end{tabular}

\end{enumerate}
For \framebox{$n=2m+1$} and $r:= \gcd(m-1,12)$,

\begin{enumerate}
\itemsep2pt \parskip0pt \parsep0pt
\item[{\rm (i)}] if $r\leq 4$ then $\mathcal{B}(n)=\frac{24}{r}$;
\item[{\rm (ii)}] if $r=6$ then 

\begin{tabular}{ll}
$  \bullet \, \mathcal{B}(n)= 4$ & if  every prime factor of $\frac{n}{3}$ congruent to $3\pmod 4$ \\ & occurs with even exponent,  \\
 $\bullet \, \mathcal{B}(n)= 8$ & otherwise; 
\end{tabular}

\item[{\rm (iii)}] if $r=12$ then 

\begin{tabular}{ll}
$  \bullet \,\mathcal{B}(n)= 2$ & if $\frac{n-3}{24}$  is a triangular number, \\
$  \bullet \, \mathcal{B}(n)=4$ & if  $\frac{n-3}{24}$ is not a triangular number and every prime \\ & factor of $\frac{n}{3}$   congruent to $3\pmod 4$ occurs with  \\ &  even exponent,  \\
$  \bullet \, \mathcal{B}(n)= 6$  & otherwise.
\end{tabular}
\end{enumerate}
\end{theoremL}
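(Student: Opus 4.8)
The plan is to reduce the theorem to the two explicit minimization problems that have already been set up, and then to solve those by classical number theory. If the fixed point set is not discrete then it is infinite and there is nothing to prove, so I may assume the fixed points are isolated. The hypotheses of the theorem are exactly those of Theorem~\ref{ab}, and applying it gives that the number of fixed points is at least $\mathcal{B}(n)$, where $\mathcal{B}(n)$ is \emph{defined} as the minimum of a certain integer-valued function over the integer points of an explicit hyperplane, both the function and the hyperplane being read off from the formula for $c_1c_{n-1}[M]$ in \cite[Theorem~1.2]{GoSa12} together with the constraint $c_1c_{n-1}[M]=0$. Hence the entire content of the statement is the explicit evaluation of $\mathcal{B}(n)$, and it remains to carry out these minimizations.

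Next I would split according to the parity of $n$, since the hyperplane and the set of admissible integers depend on whether $n=2m$ or $n=2m+1$: the even case is Theorem~\ref{thm:B} and the odd case is Theorem~\ref{thm:C}. In both, an elementary affine change of variables turns the minimization into a question of the following shape: find the least number of terms needed to write an explicit integer $T(n)$, depending on $m$ and on $r$, as a sum of integers taken from an explicit set $\Sigma_n$ which, after rescaling, is the set of perfect squares or the set of triangular numbers, with a prescribed number of the terms allowed to vanish. The divisibility of $m$ by $2$, $3$ and $4$ is exactly what splits the discussion into the six subcases $r=\gcd(m,12)\in\{1,2,3,4,6,12\}$ when $n=2m$ and the three ranges $r\le 4$, $r=6$, $r=12$ with $r=\gcd(m-1,12)$ when $n=2m+1$; the coefficient $24$ that the quadratic form carries in the odd case is the source of the bounds $24/r$ there.

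Then I would feed in the results recalled in Section~\ref{sec:prent}. Lagrange's four-square theorem and Gauss's theorem that every nonnegative integer is a sum of three triangular numbers supply the uniform upper bounds, which is why $\mathcal{B}(n)$ stays bounded --- by $12$ for $n$ even and by $24$ for $n$ odd --- no matter how large the dimension is. Legendre's three-square theorem ($N$ is a sum of three squares if and only if $N\neq 4^k(8t+7)$), Fermat's two-square theorem ($N$ is a sum of two squares if and only if every prime $\equiv 3\pmod 4$ divides $N$ to an even power), and the classical criteria (Gauss, Ewell) for representations by two and three triangular numbers --- which follow from the square versions via $N\mapsto 4N+1$ and $N\mapsto 8N+1$ --- then pin down exactly when the minimal number of summands equals $1$, $2$ or $3$. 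Translating these thresholds back through $T(n)$ and $\Sigma_n$ produces precisely the arithmetic side conditions in the statement: that $\frac n2$ be a square, that $\frac n{12}$ be a square, that $\frac{n-3}{24}$ be a triangular number (equivalently that $\frac n3$ be a square), that $n\equiv 28\pmod{32}$, that every prime $\equiv 3\pmod 4$ dividing $\frac n6$ for $n$ even, or $\frac n3$ for $n$ odd, occur with even exponent, and that $n\neq 4^k(8t+7)$ --- together with the corresponding values of $\mathcal{B}(n)$.

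The main obstacle is the bookkeeping in this last translation. One has to perform the affine reduction while keeping exact control of which coordinates are forced into $\Sigma_n$, which of them may vanish, and what congruences $T(n)$ satisfies modulo $8$, $24$ and $32$; the $4^k(8t+7)$ obstruction interacts with the parity and divisibility constraints imposed by $r$ in a way that forces the nested alternatives in the statement, for instance the extra $n\equiv 28\pmod{32}$ branch in the cases $r=2$ and $r=6$, or the coexistence of the $\frac n{12}$-square and $\frac n2$-square branches when $r=12$. The genuinely substantive work lies upstream --- in establishing Theorem~\ref{ab} from equivariant K-theory and the $c_1c_{n-1}$ formula, and in proving Theorems~\ref{thm:B} and~\ref{thm:C} --- so that, once those are in hand, the present statement is the organized collation of their conclusions.
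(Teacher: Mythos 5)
Your proposal is correct and follows the paper's own route exactly: reduce to the discrete case (a non-discrete fixed point set is infinite, so the bound is trivial), invoke Theorem~\ref{ab} to recast the problem as minimizing $F_1$ over $\mathcal{Z}_1$ (for $n=2m$) or $F_2$ over $\mathcal{Z}_2$ (for $n=2m+1$), and then evaluate those minima via the square and triangular number theorems of Lagrange, Legendre, Euler, Gauss and Ewell, which is precisely the content of Theorems~\ref{thm:B} and~\ref{thm:C}. The assembled statement is then the collation of those two theorems, as you say.
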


In some dimensions, the lower bounds obtained confirm Kosniowski's  conjecture, and, in some cases, they are better than $n+1$,  the existing lower bound  for Hamiltonian and some 
almost complex actions. We give a complete list of these dimensions in Propositions~\ref{prop:2} 
and \ref{prop:3}.

Under the same vanishing condition on $c_1c_{n-1}[M]$, the expression of this Chern number given in \cite[Theorem 1.2]{GoSa12} can  also be used  to prove 
\emph{divisibility results} for the number of fixed points, provided that the fixed point set is discrete. 
\begin{theoremL}\label{thm:D}
Let $(M,J)$ be a $2n$-dimensional compact connected almost complex manifold equipped with a $J$-preserving  $S^1$\--action with 
nonempty, discrete fixed point set $M^{S^1}$ and 
such that $c_1c_{n-1}[M]=0$. Then if  $n=2m$  is even,
\begin{equation}\label{eq:div1}
\lvert M^{S^1} \rvert\equiv  0 \pmod{ \frac{12}{r}} \quad \text{with $r=\gcd{(m,12)}$}
\end{equation}
and, if $n=2m+1$ is odd, 
\begin{equation}\label{eq:div2}
\lvert M^{S^1} \rvert\equiv 0 \pmod{ \frac{24}{r}} \quad \text{with $r=\gcd{(m-1,12)}$.}
\end{equation}
\end{theoremL}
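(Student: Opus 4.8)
The plan is to reduce Theorem~\ref{thm:D} to a statement about a single integer quantity and then extract its divisibility from the arithmetic already assembled for Theorem~\ref{thm:MAIN}. The starting point is the formula from \cite[Theorem 1.2]{GoSa12} that expresses $c_1c_{n-1}[M]$ as a combination of the numbers of fixed points of each ``type'' (recorded, say, by the multiset of weights or by the integers $N_k$ counting fixed points whose negative-weight count is $k$). When the fixed point set is discrete, this formula becomes a linear Diophantine relation: a weighted sum of the $N_k$, with universal coefficients depending only on $n$, equals $\frac{n}{12}$ or $\frac{n-24}{24}$ or whatever normalization \cite{GoSa12} produces, times $|M^{S^1}|$, with $c_1c_{n-1}[M]$ appearing as an additive term that we now set to zero. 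Setting $c_1c_{n-1}[M]=0$ turns this into an identity of the shape $(\text{something})\cdot|M^{S^1}| = (\text{integer linear combination of the }N_k)$, where the ``something'' is a fixed rational number whose denominator is exactly $12$ (for $n$ even) or $24$ (for $n$ odd) after clearing the universal factors. Reading off the denominator of $\frac{n}{12}$ in lowest terms gives $\frac{12}{\gcd(n/2,12)}=\frac{12}{\gcd(m,12)}$ in the even case, and similarly $\frac{24}{\gcd(m-1,12)}$ in the odd case; since the right-hand side is an integer, $|M^{S^1}|$ must be divisible by that denominator.

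Concretely, I would first recall (or re-derive in a half-line) the exact form of the GoSa formula specialized to isolated fixed points, isolating the coefficient of $|M^{S^1}|$ and the coefficient of $c_1c_{n-1}[M]$. Second, I would impose $c_1c_{n-1}[M]=0$ and rearrange so that $|M^{S^1}|$ is expressed as $\frac{12}{n}$ (up to the explicit universal constant) times an integer, separating the even and odd parity cases because the combinatorics of the Todd-genus-type coefficients differ there. Third, I would carry out the elementary number theory: write $n=2m$, observe $\gcd(n,24)$ or $\gcd(m,12)$ controls how much of the denominator $12$ survives, and conclude $\frac{12}{\gcd(m,12)}\mid |M^{S^1}|$; then repeat with $n=2m+1$ to get $\frac{24}{\gcd(m-1,12)}\mid|M^{S^1}|$. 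The parity split and the precise value of the normalizing constant (whether it is $\frac{1}{12}$, $-\frac{1}{12}$, $\frac{1}{720}$ rescaled, etc.) are the only things one has to be careful about; everything else is bookkeeping.

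The main obstacle I expect is purely one of bookkeeping rather than of ideas: one must be sure that the integer on the right-hand side really is an integer, i.e.\ that the universal coefficients multiplying the $N_k$ in the GoSa expansion are integers (or at least that the combination is), so that no hidden denominator sneaks in and weakens the conclusion. This is where the equivariant $K$-theory input is essential — the relevant quantities are indices of twisted Dirac-type operators or Todd genera of the reduced spaces, hence integers — and I would cite that integrality explicitly. A secondary subtlety is making sure the claimed modulus is optimal-looking, i.e.\ that $\gcd(m,12)$ and not some coarser gcd appears; this follows once one checks that the only prime powers that can divide the denominator of $\frac{n}{12}$ are those dividing $12$, which is immediate. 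With integrality in hand, the divisibility in \eqref{eq:div1} and \eqref{eq:div2} drops out by clearing denominators, and the proof is short.
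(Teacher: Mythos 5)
Your overall strategy -- rearrange the fixed-point expression for $c_1c_{n-1}[M]$ from \cite[Theorem 1.2]{GoSa12} into an identity of the form $(\text{rational number})\cdot\lvert M^{S^1}\rvert=(\text{integer})$ and read the divisibility off the denominator -- is the strategy of the paper (which deduces Theorem~\ref{thm:D} from \eqref{eq:pmod} and \eqref{eq:divodd} in the proofs of Theorems~\ref{thm:B} and \ref{thm:C}). But as written your sketch has a genuine gap: the raw formula \eqref{eq:GS} does \emph{not} produce a coefficient of $\lvert M^{S^1}\rvert$ with denominator exactly $12$ (resp.\ $24$). Rearranging \eqref{eq:GS} directly gives $\lvert M^{S^1}\rvert=\frac{12\sum_i i(i-1)N_i}{n(3n-5)}$, whose denominator is $n(3n-5)$, and this yields a strictly weaker modulus in general. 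For instance, for $n=4$ this gives only $3\mid\lvert M^{S^1}\rvert$, whereas the theorem asserts $6\mid\lvert M^{S^1}\rvert$. The missing ingredient is Hattori's symmetry $N_i=N_{n-i}$ (\cite[Proposition 2.11]{Ha84}), which is the nontrivial equivariant input here. Pairing $g(m-k,2m)+g(m+k,2m)=2(6k^2-m)$, solving the resulting constraint for the middle term $N_m$, and substituting into the fixed-point count is what produces the clean identity $\lvert M^{S^1}\rvert=\frac{12}{m}\sum_{k\geq1}k^2N_{m-k}$ (formula \eqref{eq:B2}); only then does the gcd argument give $\frac{12}{\gcd(m,12)}\mid\lvert M^{S^1}\rvert$.

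The odd case has an analogous gap plus a factor-of-$2$ issue. The correct identity is $\lvert M^{S^1}\rvert=\frac{24}{m-1}\sum_{k\geq1}\frac{k(k+1)}{2}N_{m-k}$, and the modulus $\frac{24}{\gcd(m-1,12)}$ is obtained by applying the denominator argument to $\frac{m-1}{12}\cdot\frac{\lvert M^{S^1}\rvert}{2}\in\mathbb{Z}$, using that $\lvert M^{S^1}\rvert$ is even (again a consequence of $N_i=N_{n-i}$, since there is no middle term when $n$ is odd). Your recipe of ``reading off the denominator of $\frac{m-1}{24}$'' would instead give $\frac{24}{\gcd(m-1,24)}$, which is weaker whenever $8\mid(m-1)$ (e.g.\ $n=19$: you would get $3$ instead of the claimed $6$). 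Finally, your concern about integrality of the coefficients via index-theoretic arguments is a red herring: the $N_k$ are counts of fixed points, hence integers by definition, and the only integrality needed is that of $\sum k^2N_{m-k}$, which is trivial; the real content is the symmetry and the elimination of $N_m$.
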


\begin{remark}
In particular, if $n=\frac{1}{2}\dim(M)$ is odd then $\lvert M^{S^1} \rvert$ is always even.
\end{remark}

Note that, when the fixed point set is discrete, the number of fixed points coincides with the Euler characteristic of the a.c. manifold. 
Coincidently,   in  a letter to V.\ Gritsenko,  Hirzebruch  also obtains divisibility results for the Euler characteristic of an almost complex manifold $M$ satisfying  $c_1c_{n-1}[M]=0$ \cite{Hi99}.  
By using our methods we are able to improve his results whenever $\dim M\not\equiv 0 \pmod{6}$ (cf. Theorem~\ref{thm:E}). 
Under the stronger condition that $c_1=0$ in integer cohomology, we can combine Hirzebruch's results with ours obtaining, in some cases, 
a better lower bound for the number of fixed  points (see Theorems~\ref{thm:F} and \ref{thm:G}). 
For example, when $\dim M=4$ and $c_1=0$ we prove that the number of fixed points is at least $24$. 
This will be true, in general, whenever $\dim M \equiv 4 \pmod{16}$ and $\dim M \not \equiv 0 \pmod 6$.


In Section~\ref{sec:ex} we provide several examples that show how some of the lower bounds obtained are sharp and illustrate our divisibility results for the number of fixed points. 
In particular, we give examples of sharp lower bounds in dimensions $4, 6, 10, 12$ and $18$. It would be interesting to know the answer to the following question.
\begin{question}
\emph{Does there exist a compact almost complex $S^1$-manifold $M$ of dimension $8$ with $c_1c_3[M]=0$ and with exactly $6$ fixed points?}
\end{question}
The existence of such a manifold would also provide an example with a sharp lower bound in dimension $14$. 
In the following table we illustrate some of the results obtained in this work.
\vspace{0.3cm}

\renewcommand{\arraystretch}{1.1}
\begin{tabular}{| l ||  l | c | c  |} 
\hline  & & & \\ \multirow{3}{*}{ {\color{blue!60!black} $\dim M= 2n $}}  & Possible values   & Kosniowski's  & Lower bound for  \\
 & of $\lvert M^{S^1}\rvert$ if  & lower bound  & Hamiltonian actions \\ 
 & $c_1c_{n-1}[M]=0$ &  $\lfloor \frac{n}{2} \rfloor + 1$ &  $n+1$\\ & & & \\ \hline 
\hline {\color{blue!60!black} $4^*$} & {\bf 12}, 24, 36, \ldots  & 2 & 3 \\  \hline 
{\color{blue!60!black} $6$}  & {\bf 2}, 4, 6, \ldots & 2  & 4 \\  \hline 

{\color{blue!60!black} $8$}  &  {\bf 6}, 12, 18, \ldots & 3 & 5 \\  \hline 

{\color{blue!60!black} $10$} &  {\bf 24}, 48, 72, \ldots & 3 & 6  \\  \hline 

{\color{blue!60!black} $12$} &  {\bf 4}, 8, 12, \ldots & 4 & 7 \\  \hline 

{\color{blue!60!black} $14$} &   {\bf 12}, 24, 36, \ldots  & 4 & 8 \\  \hline 

{\color{blue!60!black} $16$} &   {\bf 3}, 6, 9, \ldots & 5 & 9 \\  \hline 

{\color{blue!60!black} $18$}  &  {\bf 8}, 16, 24, \ldots & 5 & 10\\  \hline 

{\color{blue!60!black} $20^*$}&  {\bf 12}, 24, 36, \ldots &  6 & 11\\  \hline 

{\color{blue!60!black} $22$} &  {\bf 6}, 12, 18, \ldots & 6 & 12\\   \hline 

{\color{blue!60!black} $24$} &  {\bf 2}, 4, 6, \ldots & 7 & 13 \\  \hline 

{\color{blue!60!black} $26$} &  {\bf 24}, 48, 72, \ldots  & 7 & 14\\  \hline 

{\color{blue!60!black} $28$}  &  {\bf 12}, 24, 36, \ldots & 8 & 15\\  \hline 

{\color{blue!60!black} $30$} &  {\bf 4}, 8, 12, \ldots & 8 & 16\\   \hline 

\multicolumn{4}{l}{* if $c_1=0$ then  the possible values of $\lvert M^{S^1}\rvert$ are  {\bf 24}, 48, 72, \ldots }
 
\end{tabular}

\renewcommand{\arraystretch}{1}

{\emph{Acknowledgements}.  
This paper started at the Bernoulli Center
in Lausanne (EFPL) during the program on Semiclassical Analysis and Integrable
Systems organized by \'Alvaro Pelayo, Nicolai Reshetikhin, and   San V\~u Ng\d oc, from
July 1 to December 31, 2013.    We would like to thank D.\ McDuff and T.\ S.\ Ratiu for useful comments and discussions}.

\section{Preliminares} \label{s2}

We review some results which are relevant  for this article, including some which we will need in the proofs.

\subsection{Origins}

It has been a long standing problem to estimate the minimal number of fixed points of a smooth  circle action on a compact smooth manifold with nonempty fixed point set. If the manifold is symplectic, i.e.\ if it admits 
 a closed, non\--degenerate
two\--form $\omega \in \Omega^2(M)$ (\emph{symplectic form}), we say that an $S^1$\--action on  $(M,\omega)$ 
is \emph{symplectic} if it preserves $\omega$.  If
$\mathcal{X}_M$ is the vector field induced by the $S^1$ action then we say that the action is \emph{Hamiltonian} if the $1$\--form
$\iota_{\mathcal{X}_M} \omega:=\omega(\mathcal{X}_M,\cdot)$ is exact, that is,  if there exists 
 a smooth map $\mu \colon M \to \R$ such that
$- \op{d}\! \mu = \iota_{\mathcal{X}_M} \omega.$
The map $\mu$ is called a \emph{momentum map}.  
If a symplectic manifold is equipped with a Hamiltonian $S^1$-action then the following fact is well-known (cf. Section~\ref{fr}).

\begin{prop}\label{prop}
A Hamiltonian $S^1$-action on a $2n$-dimensional compact symplectic manifold has at least $n+1$ fixed points.
\end{prop}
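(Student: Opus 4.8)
The plan is to recall the standard Morse-theoretic argument via the momentum map. First I would observe that since $S^1$ is compact and acts symplectically with momentum map $\mu\colon M\to\R$, the critical points of $\mu$ are exactly the fixed points of the action: indeed $\op{d}\!\mu_p=0$ if and only if $(\iota_{\ham{M}}\omega)_p=0$, and since $\omega$ is nondegenerate this happens if and only if $(\ham{M})_p=0$, i.e.\ $p$ is a fixed point. Since $M$ is compact, $\mu$ attains a maximum and a minimum, so the fixed point set is in particular nonempty.

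Next I would invoke the classical fact (due to Atiyah, and Frankel in the K\"ahler case) that $\mu$ is a Morse-Bott function whose critical submanifolds are the connected components of $M^{S^1}$, and that all Morse-Bott indices and coindices are even. The even-index property follows from linearizing the action near a fixed component: the normal bundle splits into two-dimensional weight spaces on which $S^1$ rotates, and $\mu$ behaves like $\sum \pm n_j|z_j|^2$ on these, so both the index and coindex are even. Consequently $\mu$ is a \emph{perfect} Morse-Bott function over any field: the Morse-Bott inequalities become equalities because the vanishing of odd-index critical data forces the spectral sequence to degenerate, giving
\begin{equation}
\sum_{i} \dim H^i(M;\Q) \;=\; \sum_{C} \sum_j \dim H^{j}(C;\Q),
\end{equation}
where $C$ ranges over the connected components of $M^{S^1}$.

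From perfectness I would extract the count of fixed points. Each critical component $C$ contributes at least its own Euler characteristic's worth, but more simply: perfectness implies $\sum_i \dim H^i(M;\Q) = \sum_C \sum_j \dim H^j(C;\Q) \geq \sum_C (\text{number of generators}) \geq$ the number of components, and in particular each component carries a class in degree equal to its index and another in degree index plus $\dim C$. The cleanest route to ``at least $n+1$'': the critical values of $\mu$ are finitely many, say $c_0<c_1<\dots<c_k$ with $k+1$ distinct values, and as the level crosses each $c_\ell$ the Betti numbers jump, so $\sum_i \dim H^i(M;\Q)\geq 2(k+1)$ if one is careful, but the standard statement I would use is that $\mu$, being a perfect Morse-Bott function with only even indices, forces $M$ to have nontrivial cohomology in each even degree $0,2,4,\dots,2n$; hence $\sum_i\dim H^i(M;\Q)\geq n+1$. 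Matching this with the right-hand side $\sum_C\sum_j\dim H^j(C;\Q)$ and noting that each nonempty component contributes at least $1$ (from $H^0$), while a component of positive dimension contributes at least $2$, a short bookkeeping argument shows the number of fixed points is at least $n+1$; when the fixed set is discrete this is immediate since then each $C$ is a point contributing exactly one class in one even degree, and there must be classes in all of degrees $0,2,\dots,2n$.

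The main obstacle, such as it is, is being careful about the passage from ``perfect Morse-Bott function with even indices'' to the precise inequality on the number of components, since a single positive-dimensional component can account for several cohomology degrees at once. I would handle this by the level-set argument: order the distinct critical values $c_0<\dots<c_k$; crossing $c_\ell$ changes the homotopy type by attaching cells of dimension equal to the indices occurring at level $c_\ell$ (all even), and since $\mu$ is perfect each such crossing strictly increases total Betti number by at least the number of critical components at that level; combined with the fact that $H^0$ and $H^{2n}$ are one-dimensional and the minimum and maximum are isolated critical values, one gets at least $n+1$ fixed points. This is exactly the argument referenced in Section~\ref{fr} of the paper, so I would simply cite Frankel's theorem together with the Atiyah-Bott-Frankel convexity/Morse theory package and present the counting step as above.
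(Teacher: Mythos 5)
Your overall architecture (fixed points $=$ critical points of $\mu$; $\mu$ is Morse--Bott with even indices, hence perfect; convert the cohomology count into a fixed-point count) is the same as the paper's, which disposes of the positive-dimensional case by noting that a positive-dimensional critical submanifold already contains infinitely many points and then runs the perfect-Morse argument when the fixed set is discrete. But there is a genuine gap at the one step that actually produces the number $n+1$. You assert that ``$\mu$, being a perfect Morse--Bott function with only even indices, forces $M$ to have nontrivial cohomology in each even degree $0,2,\dots,2n$.'' That implication is false as a statement about Morse theory: the height function on $S^{4}$ is a perfect Morse function whose two critical points have even indices $0$ and $4$, yet $H^{2}(S^{4};\Q)=0$. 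Perfectness tells you that the critical-point data computes the Betti numbers; it says nothing about which Betti numbers are nonzero. Your level-crossing variant has the same defect: it yields $\sum_i b_i(M)\geq \#\{\text{critical components}\}$, which, combined with the equality coming from perfectness, gives no lower bound at all --- a priori there could be only two critical values (minimum and maximum) and two fixed points.

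The missing ingredient is the symplectic form itself. Since $\omega^{n}$ is a volume form on the compact manifold $M$, the class $[\omega]^{n}\in H^{2n}(M;\R)$ is nonzero, hence $[\omega]^{k}\neq 0$ for every $k=0,\dots,n$, and therefore $b_{2k}(M)\geq 1$ for $k=0,\dots,n$. This is what forces $\sum_{i}b_{i}(M)\geq n+1$, and only then does perfectness (number of fixed points $=\sum_i b_i(M)$ in the discrete case) deliver the claimed bound. This is precisely the step supplied in Section~\ref{fr} of the paper (``the classes $[\omega^k]$ are nontrivial\dots'') and the one your write-up never invokes, so as written the proof does not go through.
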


Since Frankel \cite{Frankel1959} showed that 
a K\"ahler $S^1$-action on a compact K\"ahler manifold is Hamiltonian
if and only if it has fixed points,
a lower bound for the number of fixed points of a circle action was thus obtained in the K\"ahler case.

\begin{theorem}[Frankel '59]\label{general2}
A K\"ahler $S^1$-action on a $2n$-dimensional compact connected K\"ahler manifold is Hamiltonian if and only if it has fixed points, in which case it has at least $n+1$ fixed points.
\end{theorem}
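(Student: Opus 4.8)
The plan is to separate the three assertions — the easy implication, the fixed‑point count, and the substantive part, which is Frankel's theorem proper~\cite{Frankel1959}. First, \emph{Hamiltonian $\Longrightarrow$ nonempty fixed point set} is immediate: if $\mu\colon M\to\R$ is a momentum map then, $M$ being compact, $\mu$ attains a maximum and a minimum, and at such a point $\iota_{\mathcal{X}_M}\omega=-\op{d}\!\mu$ vanishes, so nondegeneracy of $\omega$ forces $\mathcal{X}_M=0$ there. The sharper estimate $|M^{S^1}|\geq n+1$ in this case is exactly Proposition~\ref{prop} (run perfect Morse--Bott theory for $\mu$, whose critical set is $M^{S^1}$ with only even indices, and use $b_{2k}(M)\geq 1$ for $0\leq k\leq n$, since $[\omega]^k\neq 0$). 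So the whole theorem reduces to the converse: \emph{a Kähler circle action with a fixed point is Hamiltonian}.

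Write $X:=\mathcal{X}_M$. Since the action preserves $\omega$ and $J$, the field $X$ is symplectic ($\mathcal{L}_X\omega=0$, hence $\op{d}(\iota_X\omega)=0$) and holomorphic ($\mathcal{L}_XJ=0$, so its $(1,0)$-part $X^{1,0}=\tfrac12(X-iJX)$ is a holomorphic section of $T^{1,0}M$). The action is Hamiltonian precisely when the de Rham class $[\iota_X\omega]\in H^1(M;\R)$ vanishes, so I would Hodge-decompose $\iota_X\omega=\beta+\op{d}\!f$ with $\beta$ the harmonic representative and reduce the theorem to showing $\beta=0$.

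The key is the Hodge-theoretic fact that on a compact Kähler manifold every real harmonic $1$-form is $\sigma+\overline\sigma$ for some holomorphic $1$-form $\sigma\in H^0(M,\Omega^1_M)$. Writing $\beta$ this way, the function $\sigma(X^{1,0})$ is holomorphic on the compact connected $M$, hence constant; evaluating at a fixed point $p$, where $X^{1,0}_p=0$, forces $\sigma(X^{1,0})\equiv 0$. Using that $\beta$ is co-closed (so it is $L^2$-orthogonal to the exact form $\op{d}\!f$) and that $(\iota_X\omega)^\sharp=JX$ for the Kähler metric $g=\omega(\cdot,J\cdot)$, one obtains
\begin{equation*}
\|\beta\|_{L^2}^2=\int_M\langle\beta,\iota_X\omega\rangle\,dV=\int_M\beta(JX)\,dV.
\end{equation*}
Now $JX=iX^{1,0}-iX^{0,1}$ with $X^{0,1}=\overline{X^{1,0}}$, while $\sigma$ annihilates $(0,1)$-vectors and $\overline\sigma$ annihilates $(1,0)$-vectors, so the integrand equals $i\,\sigma(X^{1,0})-i\,\overline{\sigma(X^{1,0})}=0$. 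Hence $\beta\equiv 0$, the class $[\iota_X\omega]$ vanishes, the action is Hamiltonian, and by Proposition~\ref{prop} it has at least $n+1$ fixed points.

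The only real obstacle is the converse direction, and inside it the single analytic input — the identification of harmonic $1$-forms on a compact Kähler manifold with real parts of holomorphic ones; everything else is bookkeeping with $(p,q)$-types, the fixed point serving only to pin down an a priori constant (without it, translations on complex tori show the ``only if'' can fail). If one wanted to avoid even that input, the same skeleton goes through via the Albanese map: the $S^1$-action, fixing $p$, acts trivially on the Albanese torus — a connected group maps trivially to its discrete automorphism group — so the Albanese map is $S^1$-invariant, $X$ is tangent to its fibres, and $[\iota_X\omega]$, being pulled back from the torus, vanishes. I would present the harmonic-form computation, as it is the shortest self-contained route.
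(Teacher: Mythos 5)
Your proposal is correct, but it is doing more than the paper does: the paper never proves Theorem~\ref{general2} itself. It cites Frankel~\cite{Frankel1959} for the equivalence ``Hamiltonian $\Longleftrightarrow$ fixed points exist'' and only supplies, in Section~\ref{fr}, the argument for the count $|M^{S^1}|\geq n+1$ (the momentum map is a perfect Morse--Bott function with critical set $M^{S^1}$ and even indices, and $b_{2k}(M)\geq 1$ because $[\omega]^k\neq 0$) --- exactly the argument you reuse through Proposition~\ref{prop}, and your easy direction (extremes of $\mu$ are zeros of $\mathcal{X}_M$ by nondegeneracy of $\omega$) is the standard observation the paper leaves implicit. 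What you add is a self-contained proof of the substantive converse: Hodge-decompose $\iota_X\omega=\beta+\op{d}\!f$, write the real harmonic representative as $\beta=\sigma+\overline{\sigma}$ with $\sigma$ holomorphic, use holomorphy and connectedness to make $\sigma(X^{1,0})$ a constant killed by the fixed point, and conclude $\|\beta\|_{L^2}^2=\int_M\beta(JX)\,dV=0$ by type considerations. This computation is sound (co-closedness of $\beta$ gives the $L^2$-orthogonality to $\op{d}\!f$, and $(\iota_X\omega)^\sharp=JX$ up to a sign that does not affect the vanishing), and your remark that the fixed point only pins down the constant --- with the torus as the counterexample otherwise --- correctly isolates where the hypothesis enters. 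The Albanese sketch is looser (the induced action is a priori affine, and the step from $S^1$-invariance of the Albanese map to $[\iota_X\omega]=0$ still secretly routes through the harmonic $1$-forms), but you rightly defer to the harmonic-form computation. In short: where the paper outsources the theorem to a reference and proves only the Morse-theoretic tail, you give a complete and correct proof whose only nontrivial input is the Kähler identification of harmonic $1$-forms with $H^0(M,\Omega^1_M)\oplus\overline{H^0(M,\Omega^1_M)}$.
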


Trying to extend this lower bound on the number of fixed points, Kosniowski considers unitary $S^1$-manifolds and proposes that if $M$ is a $2n$-dimensional unitary $S^1$-manifold with isolated fixed points which does not bound equivariantly, then the number of fixed points is greater than $f(n)$, where $f(n)$ is some linear function (Conjecture~\ref{K1979}).
In fact, Kosniowski suggests that, most likely, one has $f(n)=\frac{n}{2}$ since it works in low dimensions, leading to the conjecture that the number of fixed points on $M$ is at least $\lfloor \frac{n}{2}\rfloor +1$, where
$2n$ is the dimension of $M$.
\begin{remark}
Recall that a \emph{unitary $S^1$-manifold} is a stable complex manifold $M$, i.e.\ $TM\oplus \R^{2k}$ has a complex structure for some $k$ (where $\R^{2k}$ is the trivial real vector bundle over $M$), equipped with an $S^1$-action that preserves this structure.  Thus,  every $S^1$-almost complex manifold, and hence every $S^1$-symplectic manifold, is unitary. Moreover,  $M$ \emph{bounds} if it is cobordant with the empty set, meaning that it can be realized as the oriented boundary of a smooth oriented ${2n+1}$-manifold with boundary. In particular, if this is the case, all the Pontrjagin and Stiefel-Whitney numbers vanish. Note that  if $M$ is smooth and admits a semi-free circle action with isolated fixed points then it bounds \cite{LiLi2010}. 
\end{remark}

Another related result was proved by Hattori \cite[Theorem 5.1]{Ha84} for almost complex manifolds.  If $(M,J)$ is a $2n$-dimensional \emph{almost-complex manifold}, i.e.\ $J$ is a complex structure on the tangent bundle $TM$,  then one can consider the Chern classes $c_j\in H^{2j}(M,\Z)$ of $TM$  as well as any Chern number.  In particular, one can take  $c_1^n[M]$. On the other hand,  by taking   the restriction of the first equivariant Chern class of $TM$
 at  each fixed point $p \in M^{S^1}$, which one can naturally identify with the sum of the weights of the $S^1$-isotropy representation  $T_{p}M$,  one obtains a map
 $$
 c_1^{S^1}(M) \colon M^{S^1} \to \Z, \,\,\,\,\,\,\,p \mapsto c_1^{S^1}(M)(p) \in \Z,
 $$ 
 called  the \emph{Chern class map} of $M$.

\begin{theorem}[Hattori]\label{thm:Hattori}
If $M$ is a $2n$-dimensional almost-complex manifold such that $c_1^n[M]$  does not vanish and the Chern class map is injective,  then any $S^1$-action on $M$ must have at least $n+1$ fixed points.
\end{theorem}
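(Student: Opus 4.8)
\textbf{Proof proposal for Theorem~\ref{thm:Hattori} (Hattori).}
The plan is to combine the Atiyah--Bott--Berline--Vergne (ABBV) localization formula with a dimension/degree count, exploiting the nonvanishing of $c_1^n[M]$ and the injectivity of the Chern class map to rule out configurations with fewer than $n+1$ fixed points. First I would recall that a $J$-preserving $S^1$-action with isolated fixed points (which we may assume after replacing the action by a generic subcircle, using that $M^{S^1}$ nonempty forces the same for a generic subcircle, or by working directly with the components and the weights on normal bundles) produces, at each $p\in M^{S^1}$, weights $w_1(p),\dots,w_n(p)\in\Z\setminus\{0\}$, and that $c_1^{S^1}(M)(p)=\sum_{j=1}^n w_j(p)$. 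The key input is the localization formula applied to the equivariant cohomology class $(c_1^{S^1})^n\in H^{2n}_{S^1}(M)$: its pushforward to a point is
\begin{equation}\label{eq:loc}
\int_M (c_1^{S^1})^n = \sum_{p\in M^{S^1}} \frac{\bigl(c_1^{S^1}(M)(p)\bigr)^n}{\prod_{j=1}^n w_j(p)}.
\end{equation}

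Next I would observe that the left-hand side of \eqref{eq:loc}, interpreted in $H^*_{S^1}(\mathrm{pt})\cong\R[t]$, is a polynomial in $t$ of degree $0$ whose constant term is exactly the ordinary Chern number $c_1^n[M]$ (after the standard normalization $c_1^{S^1}=c_1 + (\text{something})\,t$, the top-degree part in the manifold directions contributes $c_1^n[M]$, independently of $t$). Hence the right-hand side, a priori a rational function of $t$ via $w_j(p)=w_j(p)\cdot t$, must in fact be the constant $c_1^n[M]\neq 0$. Now suppose for contradiction that $|M^{S^1}|\leq n$. Then I would run the familiar Kosniowski-style argument: consider the generating-function identity obtained by localizing the equivariant class $\prod$-type data, or more directly, examine the sum $\sum_p \frac{x^{c_1^{S^1}(M)(p)}}{\prod_j(1-x^{w_j(p)})}$ appearing in the equivariant index / rigidity circle of ideas, whose expansion encodes the Hirzebruch $\chi_y$-genus; using that $M$ is almost complex, the $\hat{A}$- or Todd-type localization identities force strong constraints when there are too few fixed points. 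The cleanest route, however, is probably purely via \eqref{eq:loc} together with the analogous localization identities for $1, c_1^{S^1}, (c_1^{S^1})^2,\dots,(c_1^{S^1})^{n-1}$: each of these has left-hand side a polynomial in $t$ of the appropriate (nonnegative, bounded) degree, which translates into a linear system on the quantities $\frac{1}{\prod_j w_j(p)}$ and the power sums of the $c_1^{S^1}(M)(p)$. With only $n$ fixed points the Vandermonde-type system in the $c_1^{S^1}(M)(p)$ (distinct, by injectivity of the Chern class map!) is overdetermined in a way that is incompatible with $c_1^n[M]\neq 0$; injectivity is exactly what guarantees the relevant Vandermonde determinant is nonzero so the system has a unique solution, and that solution makes the top Chern number vanish.

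The main obstacle I anticipate is making the degree bookkeeping in \eqref{eq:loc} and its lower-degree analogues fully rigorous: one must track precisely how $c_1^{S^1}$ decomposes into its $t$-graded pieces, confirm that the $t^0$-coefficient of $\int_M(c_1^{S^1})^k$ vanishes for $k<n$ and equals $c_1^n[M]$ for $k=n$, and verify that the resulting linear-algebra contradiction genuinely uses $|M^{S^1}|\leq n$ rather than a weaker bound. A secondary technical point is the reduction to isolated fixed points and the treatment of the weights' signs (equivalently, the choice of orientation/complex structure entering the denominators), which must be handled consistently so that the localization formula is applied with the correct normalization. Once these are in place, the contradiction with $c_1^n[M]\neq 0$ yields $|M^{S^1}|\geq n+1$.
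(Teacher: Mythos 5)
The paper does not actually prove this statement: it quotes it from Hattori \cite[Theorem 5.1]{Ha84}, whose original argument is K-theoretic, so there is no in-paper proof to match. Your cohomological localization route is nevertheless correct and is essentially the standard modern proof (the same mechanism underlying Theorems~\ref{general} and~\ref{thm:PL}). To record the skeleton cleanly: if $M^{S^1}$ is not discrete the conclusion is trivial, and if it is empty then ABBV gives $c_1^n[M]=\int_M (c_1^{S^1})^n=0$, contradicting the hypothesis; so assume $M^{S^1}=\{p_1,\ldots,p_N\}$ with $1\le N\le n$. Writing $a_i=c_1^{S^1}(M)(p_i)$ and $\Lambda_{p_i}=\prod_j w_j(p_i)\neq 0$, the pushforward $\int_M(c_1^{S^1})^k$ lands in $H^{2k-2n}_{S^1}(\mathrm{pt})=0$ for $0\le k\le n-1$, so localization yields $\sum_{i=1}^N a_i^k/\Lambda_{p_i}=0$ for those $k$; since injectivity of the Chern class map makes the $a_i$ distinct, the $N\times N$ Vandermonde subsystem forces every $1/\Lambda_{p_i}=0$, which is absurd (equivalently, it forces $\int_M(c_1^{S^1})^n=c_1^n[M]=0$). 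Two small corrections to your write-up: the phrase ``replacing the action by a generic subcircle'' is vacuous for an $S^1$-action --- the only reduction needed is the trivial one for non-discrete fixed sets --- and the detour through the $\chi_y$-genus generating function is unnecessary (that is closer to Hattori's actual K-theoretic argument, and to the techniques behind Theorem~\ref{GS}); your ``cleanest route'' is already complete once the Vandermonde step is written out. Note also that your argument in fact shows the slightly stronger statement that injectivity of the Chern class map plus $M^{S^1}\neq\emptyset$ suffices, with $c_1^n[M]\neq 0$ used only to rule out a fixed-point-free action; this is consistent with the refinements of \cite{PeTo2010} and \cite{LiLi2010} recalled in Section~\ref{s2}.
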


\begin{remark}
This result is improved in \cite[Corollary 1.5]{LiLi2010} where the condition on the injectivity of the Chern class map is removed.
\end{remark}

\subsection{Some recent contributions}

Following Kosniowski's conjecture and the theorems of Frankel and Hattori, many results have appeared in recent works. 

By using the Atiyah\--Bott and Berline\--Vergne localization formula in equivariant
cohomology,
Pelayo and Tolman \cite{PeTo2010} proved the following result which generalizes the known lower bound for the number of fixed points of Hamiltonian $S^1$-actions to some symplectic   non-Hamiltonian actions. Note, however, that there are no known examples of these actions with discrete fixed point sets.
\begin{theorem}[\cite{PeTo2010}]\label{general}
Let $S^1$  act symplectically
on a compact  symplectic $2n$-dimensional manifold $M$.
If the Chern class map $c_1^{S^1}(M) \colon M^{S^1}\to \Z$ is somewhere injective\footnote{Let $f\colon X \to Y$ be 
a map between sets then
$f$ is \emph{somewhere injective}
if 
there exists  $y\in Y$ such that $f^{-1}(\{y\})$
is the singleton.}  then
 the $S^1$\--action has at least  $n + 1$ fixed points.
\end{theorem}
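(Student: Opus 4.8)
The plan is to combine the Atiyah--Bott--Berline--Vergne (ABBV) localization formula with a Vandermonde-type determinant argument. First I would dispose of the non-discrete case: the fixed set $M^{S^1}$ is a closed submanifold of $M$, so if it is infinite it must contain a component of positive dimension, and then the conclusion $|M^{S^1}|\geq n+1$ is trivial. Hence assume $M^{S^1}$ is finite. Fix an $S^1$-invariant, $\omega$-compatible almost complex structure, so that $TM$ becomes a complex $S^1$-equivariant bundle with equivariant Chern classes; at each $p\in M^{S^1}$ the isotropy representation $T_pM$ splits into one-dimensional weight spaces with nonzero integer weights $w_1(p),\dots,w_n(p)$ (nonzero precisely because $p$ is isolated), and by definition $c_1^{S^1}(M)(p)=\Sigma_p:=\sum_{k=1}^n w_k(p)$. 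The somewhere-injectivity hypothesis guarantees both that $M^{S^1}\neq\emptyset$ and that some integer value is attained by exactly one fixed point $p_0$.

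The next step is to apply ABBV to the classes $(c_1^{S^1}(M))^j\in H^{2j}_{S^1}(M;\R)$ for $j=0,1,\dots,n-1$. Writing $\R[t]=H^{*}_{S^1}(\mathrm{pt};\R)$, the class $(c_1^{S^1}(M))^j$ restricts to $\Sigma_p^{\,j}\,t^j$ at $p$, while the equivariant Euler class of the normal bundle to $p$ is $t^n\prod_{k=1}^n w_k(p)\neq 0$. Since localization is valid for any compact $S^1$-manifold, whether or not the action is Hamiltonian, it gives
\[
\int_M (c_1^{S^1}(M))^j \;=\; t^{\,j-n}\sum_{p\in M^{S^1}}\frac{\Sigma_p^{\,j}}{\prod_{k=1}^n w_k(p)}
\]
as an identity in $\R(t)$. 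For $j<n$ the left-hand side vanishes for degree reasons (integration over $M$ lowers degree by $2n$), and since $t^{\,j-n}$ is invertible this yields the $n$ relations
\[
\sum_{p\in M^{S^1}}\frac{\Sigma_p^{\,j}}{\prod_{k=1}^n w_k(p)} = 0 ,\qquad j = 0,1,\dots,n-1 .
\]

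Finally, suppose for contradiction that $|M^{S^1}|\leq n$. Let $v_1,\dots,v_m$ be the distinct values of $p\mapsto\Sigma_p$, so $m\leq |M^{S^1}|\leq n$, and set $b_i:=\sum_{p:\,\Sigma_p=v_i}\big(\prod_{k=1}^n w_k(p)\big)^{-1}$. The relations above become $\sum_{i=1}^m b_i\,v_i^{\,j}=0$ for $j=0,\dots,n-1$, in particular for $j=0,\dots,m-1$; as the $m\times m$ Vandermonde matrix $(v_i^{\,j})$ has nonzero determinant $\prod_{i<i'}(v_{i'}-v_i)$, every $b_i$ must vanish. But the value attained only by $p_0$ equals some $v_{i_0}$, whence $b_{i_0}=\big(\prod_{k=1}^n w_k(p_0)\big)^{-1}\neq 0$, a contradiction. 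Therefore the action has at least $n+1$ fixed points.

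The only genuinely delicate point is the passage through equivariant cohomology for a possibly non-Hamiltonian action: one has to be certain that ABBV and the degree-vanishing $\int_M(c_1^{S^1}(M))^j=0$ for $j<n$ hold in this generality — which they do, since ABBV is a statement about arbitrary compact $S^1$-manifolds and pushforward to a point strictly decreases degree. Beyond that, the argument is the structural observation that it suffices to have one value of the Chern class map attained exactly once, together with the nonsingularity of a square Vandermonde system.
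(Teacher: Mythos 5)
Your argument is correct, and it is essentially the standard proof of this result: the paper under review only quotes Theorem~\ref{general} from \cite{PeTo2010} without reproving it, and your combination of ABBV localization applied to $(c_1^{S^1}(M))^j$ for $j<n$, degree-vanishing of the pushforward, and a Vandermonde argument on the distinct values of the Chern class map is exactly the mechanism Pelayo and Tolman use (the paper itself signals this in Section~\ref{the hypothesis}, where it isolates the condition $\sum_{p:\,c_1^{S^1}(M)(p)=k}1/\Lambda_p\neq 0$ as the crucial input). No gaps; the handling of the non-discrete case and the justification that localization applies to non-Hamiltonian symplectic actions are both appropriate.
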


In particular, they obtain the following lower bounds for the number of fixed points.

\begin{theorem}[\cite{PeTo2010}]\label{maintheorem}
Let $S^1$ act symplectically
on a compact  symplectic manifold $M$ with nonempty fixed point set. Then
there are at least  two fixed points. In particular,
\begin{itemize}
\item if
$\dim M\ge 8$, 
then  there exist at least three fixed points;
\item if $\dim M\ge 6$, and $c_1^{S^1}(M) \colon M^{S^1}\to \Z$ is not identically zero,
 then there exist at least four fixed points.
 \end{itemize}
\end{theorem}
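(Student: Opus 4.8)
The plan is to push everything through the Atiyah--Bott--Berline--Vergne (ABBV) localization formula, after the following harmless reduction: if $M^{S^1}$ is not discrete the number of fixed points is infinite and all three assertions are vacuous, so assume $M^{S^1}$ is finite and nonempty. Choose an $S^1$-invariant $\omega$-compatible almost complex structure $J$; this leaves $M^{S^1}$ unchanged and turns each tangent space $T_pM$, $p\in M^{S^1}$, into an $S^1$-module $\bigoplus_{i=1}^n\CM_{w_i(p)}$ with nonzero integer weights $w_i(p)$, so that $c_1^{S^1}(M)(p)=\sum_{i}w_i(p)$ and the equivariant Euler class of $T_pM$ is $e_p=\bigl(\prod_i w_i(p)\bigr)t^{\,n}\neq 0$, with $t$ a generator of $H^*(BS^1)$. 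For the first assertion, apply ABBV to $1\in H^0_{S^1}(M)$: integration over $M$ drops degree by $2n>0$, so $\int_M 1=0$, while localization gives $\int_M 1=\sum_{p}e_p^{-1}$; hence $\sum_{p\in M^{S^1}}\bigl(\prod_i w_i(p)\bigr)^{-1}=0$ in $\QM$, and since a single nonzero summand cannot vanish we get $\lvert M^{S^1}\rvert\geq 2$.

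For the two refined bounds, first dispose of the case in which the Chern class map $c_1^{S^1}(M)$ is somewhere injective: then Theorem~\ref{general} gives at least $n+1$ fixed points, and $n+1\geq 5>3$ when $\dim M\geq 8$ while $n+1\geq 4$ when $\dim M\geq 6$, so both assertions hold. Hence we may assume $c_1^{S^1}(M)$ is \emph{not} somewhere injective, i.e.\ every value in its image is attained at least twice; since $\lvert M^{S^1}\rvert\geq 2$, this forces $c_1^{S^1}(M)$ to be constant, identically equal to $v\,t$ for some integer $v$, whenever $\lvert M^{S^1}\rvert\leq 3$ --- and $v\neq 0$ when $c_1^{S^1}(M)\not\equiv 0$.

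It remains to reach a contradiction in the two cases left open: $(\mathrm a)$ $\dim M\geq 8$ and $\lvert M^{S^1}\rvert=2$, and $(\mathrm b)$ $\dim M\geq 6$, $c_1^{S^1}(M)\not\equiv 0$ and $\lvert M^{S^1}\rvert\in\{2,3\}$; by the previous paragraph we are in the ``constant Chern class map'' regime. Applying ABBV to the equivariant Chern classes $c^{S^1}_j(M)$, $1\leq j\leq n-1$, together with the identity coming from $1$, and noting that each $c^{S^1}_j(M)$ integrates to $0$ over $M$ for degree reasons, one obtains the system $\sum_{p\in M^{S^1}}e_\ell\bigl(w_1(p)^{-1},\dots,w_n(p)^{-1}\bigr)=0$ for every $1\leq\ell\leq n$, while the order-$n$ counterparts recover the Chern numbers of $M$ and the Euler characteristic $\chi(M)=\lvert M^{S^1}\rvert$. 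For two fixed points, writing the weights at $p$ and $q$ as $\alpha_1,\dots,\alpha_n$ and $\beta_1,\dots,\beta_n$, this amounts to the identity $\prod_{i}(s-\alpha_i^{-1})+\prod_{j}(s-\beta_j^{-1})=2s^n$ in $\QM[s]$ (with an analogous balanced system for three points) --- a condition reminiscent of the Prouhet--Tarry--Escott problem which, on its own, is \emph{not} contradictory. The missing leverage comes from the \emph{integrality} of the weights $w_i(p)\in\ZM\setminus\{0\}$, the fact that each weight multiset has the fixed size $n$, and the dimension hypothesis; where these still do not suffice, one invokes a rigidity input --- for instance the rigidity of the equivariant $\chi_y$-genus of the unitary $S^1$-manifold $(M,J)$, or, equivalently, a direct analysis of the $S^1$-actions induced on the symplectic submanifolds $M^{\ZM_k}$, $k\geq 2$, tracking the weights and the residues $\prod_i w_i(p)\bmod k$ --- to exhaust the finitely many surviving configurations and conclude.

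The genuine difficulty is concentrated in this last step. Localization against powers of $c_1^{S^1}$ gives nothing once that class is constant, and localization against the higher equivariant Chern classes only balances the reciprocal-weight multisets without producing a contradiction on its own; the extra input must combine integrality of weights with a rigidity statement ($\chi_y$-genus, or the $\ZM_k$-fixed-point analysis), and arranging this weight bookkeeping so that a single argument delivers both ``$\geq 3$ for $\dim M\geq 8$'' and ``$\geq 4$ for $\dim M\geq 6$ with $c_1^{S^1}(M)\not\equiv 0$'' is the crux of the proof.
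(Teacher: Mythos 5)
This statement is quoted from \cite{PeTo2010} and the paper gives no proof of it, so your attempt can only be judged on its own terms. Your first assertion is proved correctly and by the standard route: reduce to a discrete fixed point set, pick an invariant compatible $J$, localize $1$ to get $\sum_{p}1/\Lambda_p=0$ with $\Lambda_p=\prod_i w_i(p)\neq 0$, and conclude $\lvert M^{S^1}\rvert\geq 2$. Your reduction of the two refined bounds is also sound as far as it goes: using Theorem~\ref{general} you may assume $c_1^{S^1}(M)$ is nowhere injective, and with at most three fixed points this forces it to be constant.

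The genuine gap is that you never close either of the remaining cases; your last paragraph concedes as much. Saying that one ``invokes a rigidity input'' ($\chi_y$-genus rigidity or a $\ZM_k$-fixed-point analysis) ``to exhaust the finitely many surviving configurations'' is not an argument --- you do not exhibit the rigidity identity you intend to use, you do not show why only finitely many weight configurations survive (a priori there are infinitely many), and you do not explain how the dimension hypotheses $\dim M\geq 8$ versus $\dim M\geq 6$ enter. All of the content of the theorem beyond $\lvert M^{S^1}\rvert\geq 2$ lives precisely in that step. You also miss an easy kill for the second bullet that is available once you have reached the constant case: by Hattori's Proposition 2.11 in \cite{Ha84} (invoked in this paper in the proof of Lemma~\ref{lemma:torsion}), the weights occur in pairs $w,-w$ globally, so $\sum_{p\in M^{S^1}}c_1^{S^1}(M)(p)=0$; a constant Chern class map on a nonempty finite fixed point set is therefore identically zero, contradicting $c_1^{S^1}(M)\not\equiv 0$ outright and giving at least four fixed points when $\dim M\geq 6$. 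For the first bullet the same weight-pairing shows that two fixed points force $\Lambda_p=-\Lambda_q$ and hence $n$ odd, which disposes of $n$ even but still leaves $n$ odd, $n\geq 5$; that residual case requires the finer analysis of isotropy submanifolds carried out in \cite{PeTo2010}, and your proposal does not supply it.
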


Following this result,
Ping Li and Kefeng Liu generalized Hattori's theorem  \cite{LiLi2010}.
\begin{theorem}[Li\--Liu]\label{thm:PL}
Let $M^{2mn}$ be an almost-complex manifold. If there exist positive integers $\lambda_1,\ldots,\lambda_u$ with  $\sum_{i=1}^u \lambda_i= m$ such that the corresponding Chern number $(c_{\lambda_1}\cdots c_{\lambda_u})^n[M]$ is nonzero, then any $S^1$-action on $M$ must have at least $n+1$ fixed points.
\end{theorem}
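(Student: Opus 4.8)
\textbf{Proof proposal for Theorem~\ref{thm:PL} (Li--Liu).}

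The plan is to reduce the statement to Hattori's theorem (Theorem~\ref{thm:Hattori}) by producing, out of the given nonvanishing Chern number $(c_{\lambda_1}\cdots c_{\lambda_u})^n[M]$, an auxiliary almost complex manifold on which the \emph{top} Chern power is nonzero. First I would observe that the hypothesis is equivalent to the statement that the cohomology class $\left(c_{\lambda_1}(TM)\cdots c_{\lambda_u}(TM)\right)^n \in H^{4mn}(M;\Q)$ is a nonzero multiple of the fundamental class, and that the $S^1$-action on $M$ lifts to an $S^1$-action on any natural bundle built functorially from $TM$. The key idea is to work with the flag-type bundle (or an iterated projectivization / partial flag bundle) $\pi\colon N \to M$ whose fibre is chosen so that the splitting principle realizes $c_{\lambda_1}\cdots c_{\lambda_u}$ as a polynomial in first Chern classes of line bundles over $N$; one then builds $N$ so that the relevant product of these line-bundle classes, integrated over the fibre, recovers exactly $(c_{\lambda_1}\cdots c_{\lambda_u})^n[M]$ up to a nonzero constant. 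The $S^1$-action on $M$ induces one on $N$ preserving the almost complex structure on $N$ (the vertical tangent bundle is a sum of the tautological line bundles, the horizontal part is $\pi^*TM$), and the fixed set $N^{S^1}$ fibres over $M^{S^1}$; crucially, if $M^{S^1}$ were empty then $N^{S^1}$ would be empty, and conversely the fixed points of $N$ sit over those of $M$.

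Second, I would arrange that $c_1(TN)^{\dim_\C N}[N]$ is a nonzero multiple of $(c_{\lambda_1}\cdots c_{\lambda_u})^n[M]$, hence nonzero, by a careful choice of the tower of projective bundles (this is where the combinatorial identities about symmetric functions and Chern roots enter — one picks the dimensions of the successive projectivized bundles to match $\lambda_1,\dots,\lambda_u$ and their multiplicities). Granting this, Hattori's theorem (in the form of Theorem~\ref{thm:Hattori}, or rather its strengthening in \cite[Corollary 1.5]{LiLi2010} that drops the injectivity hypothesis on the Chern class map, as noted in the remark following Theorem~\ref{thm:Hattori}) applies to $N$: since $c_1(TN)^{\dim_\C N}[N]\neq 0$, the $S^1$-action on $N$ has at least $\dim_\C N + 1$ fixed points. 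Finally I would translate this back: $|N^{S^1}|$ is a sum, over $p \in M^{S^1}$, of the number of fixed points in the fibre $N_p$, and each fibre contributes at most a fixed (finite) number $F$ of fixed points, so $|M^{S^1}| \geq |N^{S^1}|/F$. The dimension bookkeeping is then set up so that $(\dim_\C N + 1)/F \geq n+1$, or more precisely so that the count in each fibre together with the fibre dimension forces at least $n+1$ base fixed points; the cleanest route is to choose $N$ with $\dim_\C N - \dim_\C M = mn - n$ arranged so that one recovers exactly $n+1$.

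The main obstacle I anticipate is the bookkeeping in the second step: making the choice of iterated projective bundle precise enough that the top self-intersection of $c_1(TN)$ genuinely equals (a nonzero rational multiple of) $(c_{\lambda_1}\cdots c_{\lambda_u})^n[M]$, rather than that expression plus lower-order correction terms coming from the relation $c(TN) = \pi^*c(TM)\cdot c(T_{\text{vert}}N)$. One must verify that the unwanted cross terms vanish upon integration over the fibre for dimensional reasons, which requires tracking exactly which monomials in the Chern roots survive fibre integration; this is a purely computational point but it is the crux, and it is presumably handled in \cite{LiLi2010} via a clever choice of the bundle that diagonalizes the situation (for instance, taking products of projectivizations of $TM$ itself, so that the Chern roots of the fibre directions are pullbacks of the Chern roots of $TM$, and the top class of $TN$ becomes a Schur-type polynomial whose fibre integral is a single Chern monomial). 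A secondary, minor point is checking that the almost complex structure on $N$ can be chosen $S^1$-invariant and that the fixed point count in each fibre is uniform; both follow from the functoriality of the projective bundle construction and the fact that the fibrewise $S^1$-action is the one induced on $\PM(E)$ from a linear action on $E$, whose fixed points are the coordinate points of the weight decomposition.
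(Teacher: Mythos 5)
Your proposal does not follow the route of Li--Liu (the paper only quotes this theorem from \cite{LiLi2010}; the actual proof there is a direct Atiyah--Bott--Berline--Vergne localization argument, in the same spirit as the Pelayo--Tolman theorem quoted just above it). Concretely, one applies localization to the equivariant classes $\bigl(c^{S^1}_{\lambda_1}\cdots c^{S^1}_{\lambda_u}\bigr)^j$ for $j=0,1,\dots,n$: for $j<n$ the cohomological degree is less than $\dim M$, so the integral vanishes, while for $j=n$ it equals the nonzero Chern number. Setting $x_p:=\prod_i e_{\lambda_i}(\text{weights at }p)$, this yields $\sum_{p}x_p^{\,j}/\Lambda_p=0$ for $j=0,\dots,n-1$ and $\neq 0$ for $j=n$, and a Vandermonde argument then forces the $x_p$ to take at least $n+1$ distinct values, hence at least $n+1$ fixed points. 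No auxiliary manifold is needed.

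Your reduction to Hattori's theorem via a flag/projective bundle $N\to M$ has two genuine gaps. First, the step you yourself flag as the crux --- arranging $c_1(TN)^{\dim_{\mathbb{C}}N}[N]$ to be a nonzero multiple of $(c_{\lambda_1}\cdots c_{\lambda_u})^n[M]$ --- is not just a bookkeeping issue: since $c_1(TN)=\pi^*c_1(TM)+c_1(T^{\mathrm{vert}}N)$, fibre integration of the top power produces a universal linear combination of \emph{many} Chern monomials of $M$, and the hypothesis controls only one of them; there is no choice of tower that makes the others drop out for an arbitrary $M$ (indeed the hypothesis does not even require $c_1(TM)\neq 0$, e.g.\ $\lambda_1=\dots=\lambda_u=2$). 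Second, even granting that step, the final count fails numerically: the fibre over each fixed point of $M$ contributes $F$ fixed points of $N$, where $F$ is the Euler characteristic of the fibre, which always exceeds the fibre dimension plus one. For instance with $N=\mathbb{P}(TM)$ one gets $\dim_{\mathbb{C}}N+1=2mn$ but $F=mn$, so $\lvert M^{S^1}\rvert\geq(\dim_{\mathbb{C}}N+1)/F$ only gives the bound $2$, not $n+1$; no projectivization tower can make $(\dim_{\mathbb{C}}N+1)/F\geq n+1$. (A further, smaller issue is that the fibrewise fixed points need not be isolated when isotropy weights repeat.) The localization-plus-Vandermonde argument avoids all of this.
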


Another related result was  obtained by Cho, Kim and Park \cite{CKP12}.
\begin{theorem}[Cho-Kim-Park]
Let $M$ be a $2n$-dimensional unitary $S^1$-manifold and let $i_1,i_2,\ldots, i_n$  be non-negative integers satisfying $i_1+2i_2+\cdots+ni_n=n$. If $M$ does not bound equivariantly and $c_1^{i_1}c_2^{i_2}\cdots c_n^{i_n}\neq 0$ then $M$ must have at least $\text{max}\{i_1,\cdots,i_n\}+1$ fixed points. 
\end{theorem}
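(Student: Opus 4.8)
The plan is to argue by contradiction: assuming too few fixed points, I will use the Atiyah--Bott--Berline--Vergne localization formula to produce an $S^1$-equivariant cohomology class of top degree $2n$ that restricts to zero at \emph{every} fixed point and yet integrates over $M$ to $c_1^{i_1}\cdots c_n^{i_n}[M]$, thereby forcing that Chern number to vanish. This is the mechanism behind Pelayo--Tolman's Theorem~\ref{general}; the new twist is to use the largest exponent $i_\ell=\max_k i_k$ to ``absorb'' a product ranging over all the fixed points, so that no injectivity hypothesis on the Chern class map is needed. As special cases this recovers Theorem~\ref{thm:PL} of Li--Liu (hence also Hattori's Theorem~\ref{thm:Hattori}).

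We may assume $M^{S^1}$ is finite, say $M^{S^1}=\{p_1,\dots,p_q\}$, for otherwise there is nothing to prove; moreover $M^{S^1}\neq\emptyset$, i.e.\ $q\ge1$ --- this is the role of the hypothesis that $M$ does not bound equivariantly, and it is in any event forced by $c_1^{i_1}\cdots c_n^{i_n}[M]\neq0$ together with the localization formula below, since an empty fixed-point set yields an empty sum. At each $p_a$ the isotropy representation on the (stable) tangent space has $n$ nonzero weights $w_1(p_a),\dots,w_n(p_a)$; writing $x$ for the generator of $H^2_{S^1}(\mathrm{pt})$ and $c_k^{S^1}:=c_k^{S^1}(TM)$ for the equivariant Chern classes, one has $\iota_{p_a}^*c_k^{S^1}=e_k\big(w_1(p_a),\dots,w_n(p_a)\big)\,x^k$, with $e_k$ the $k$-th elementary symmetric polynomial, and $e_{S^1}(T_{p_a}M)=\big(\prod_j w_j(p_a)\big)x^n$. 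Localization then gives $\int_M\beta=\sum_{a=1}^q \iota_{p_a}^*\beta\big/e_{S^1}(T_{p_a}M)\in\mathbb{Q}$ for every $\beta\in H^{2n}_{S^1}(M)$.

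Now suppose $q\le i_\ell$, put $\lambda_a:=e_\ell\big(w_1(p_a),\dots,w_n(p_a)\big)$ (so $\iota_{p_a}^*c_\ell^{S^1}=\lambda_a x^\ell$), and consider
\[
\widetilde\gamma\ :=\ \Big(\prod_{k\neq\ell}(c_k^{S^1})^{i_k}\Big)\,(c_\ell^{S^1})^{\,i_\ell-q}\,\prod_{a=1}^q\big(c_\ell^{S^1}-\lambda_a x^\ell\big).
\]
Its total degree is $2\sum_k k i_k=2n$, so $\widetilde\gamma\in H^{2n}_{S^1}(M)$; this is where the hypothesis $q\le i_\ell$ enters, guaranteeing $i_\ell-q\ge0$. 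Restricted to any $p_a$, the last product picks up the factor $(\lambda_a-\lambda_a)x^\ell=0$, hence $\iota_{p_a}^*\widetilde\gamma=0$ for every $a$ and therefore $\int_M\widetilde\gamma=0$ by localization. On the other hand, expanding $\prod_a(c_\ell^{S^1}-\lambda_a x^\ell)$ in powers of $c_\ell^{S^1}$, the leading term reconstitutes $\prod_k(c_k^{S^1})^{i_k}$, whose integral is $c_1^{i_1}\cdots c_n^{i_n}[M]$, while each of the remaining terms is a scalar times a positive power of $x$ times the integral of an equivariant class of degree $2n-2\ell\,t$ for some $1\le t\le q$ --- a degree strictly less than $2n$ --- and such an integral lies in $H^{<0}_{S^1}(\mathrm{pt})=0$. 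Thus $0=\int_M\widetilde\gamma=c_1^{i_1}\cdots c_n^{i_n}[M]$, contradicting the hypothesis, and so $q\ge i_\ell+1=\max_k i_k+1$.

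Once one hits on substituting $(c_\ell^{S^1})^{i_\ell-q}\prod_a(c_\ell^{S^1}-\lambda_a x^\ell)$ for $(c_\ell^{S^1})^{i_\ell}$, the argument is essentially degree bookkeeping, so the two points to get right are that this substitution keeps $\widetilde\gamma$ in degree exactly $2n$ (which uses $q\le\max_k i_k$) and that the lower-order corrections vanish because $2n-2\ell\,t<2n$. The genuinely technical point --- which I expect to be the main obstacle to a fully rigorous writeup --- is justifying the localization formula and the restriction identities $\iota_p^*c_k^{S^1}=e_k(w(p))\,x^k$ in the \emph{unitary} (merely stably almost complex) category rather than the almost complex one, and, should ``number of fixed points'' be meant to include positive-dimensional fixed components, carrying out the obvious but slightly fiddly adaptation in which each scalar $\lambda_a x^\ell$ is replaced by $c_\ell^{S^1}$ restricted to the corresponding component.
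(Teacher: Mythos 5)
The paper does not prove this statement: it is quoted from Cho--Kim--Park \cite{CKP12} without proof, so there is no internal argument to compare against. Your localization proof is correct and is essentially the argument of that reference (the same mechanism the paper describes in Section~\ref{the hypothesis} behind Theorems~\ref{general} and \ref{thm:PL}): the assumption $q\le i_\ell$ is precisely what keeps $\widetilde\gamma$ in degree $2n$; the correction terms die because the equivariant pushforward to a point annihilates classes of degree below $2n$; and the leading term's pushforward is the ordinary Chern number because pushforward commutes with setting $x=0$, which yields the contradiction. The two caveats you flag are genuine but standard: at an isolated fixed point of a unitary $S^1$-manifold the zero weights contributed by the trivial stabilizing factor drop out of both $e_{S^1}(T_pM)$ and $e_k\bigl(w(p)\bigr)$, so the restriction identities and the ABBV formula read exactly as in the almost complex case; and, as you note, the hypothesis that $M$ does not bound equivariantly is subsumed by $c_1^{i_1}\cdots c_n^{i_n}[M]\neq 0$.
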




\section{Fermat's famous statements}
\label{sec:prent}

In 1640 Fermat stated (without proof) that  every positive integer is a sum of at most $4$ squares and a sum of at most $3$ triangular numbers, where square and triangular numbers are those respectively described by $k^2$ and $\frac{k(k+1)}{2}$, with $k=0,1,2,3,\ldots$.  

Lagrange, in 1770,  proved the part of Fermat's theorem regarding squares, obtaining his celebrated Four Squares Theorem \cite[p. 279]{D}.

\begin{theorem}[Lagrange's Four Squares Theorem]\label{thm:Lagrange}
Every nonegative integer is the sum of $4$ or fewer squares.
\end{theorem}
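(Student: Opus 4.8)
The plan is to follow the classical descent argument of Euler and Lagrange, which proceeds in two stages: first a multiplicativity reduction, then an infinite-descent step. The target is to show every nonnegative integer $N$ is a sum of four squares; since $0$ and $1$ are trivial and $2=1+1$, it suffices to treat $N\geq 3$, and in fact it suffices to treat odd primes $p$, as I explain next.

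\emph{Step 1: Euler's four-square identity.} First I would record the identity
\begin{equation}\label{eq:euler4sq}
(a_1^2+a_2^2+a_3^2+a_4^2)(b_1^2+b_2^2+b_3^2+b_4^2)=c_1^2+c_2^2+c_3^2+c_4^2,
\end{equation}
where $c_1=a_1b_1+a_2b_2+a_3b_3+a_4b_4$, $c_2=a_1b_2-a_2b_1+a_3b_4-a_4b_3$, and $c_3,c_4$ are the analogous expressions (this is the norm form of the Lipschitz quaternions). Equation \eqref{eq:euler4sq} shows that the set of sums of four squares is closed under multiplication. Combined with the prime factorization of $N$ and the trivial representations of $1$ and $2$, this reduces the theorem to showing that every odd prime $p$ is a sum of four squares.

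\emph{Step 2: some multiple of $p$ is a sum of four squares.} Next I would show there exists an integer $m$ with $1\leq m<p$ such that $mp=x_1^2+x_2^2+x_3^2+x_4^2$ for some integers $x_i$. This uses a pigeonhole/counting argument: the sets $\{x^2 \bmod p : 0\leq x\leq \frac{p-1}{2}\}$ and $\{-1-y^2 \bmod p : 0\leq y\leq \frac{p-1}{2}\}$ each have $\frac{p+1}{2}$ elements, so they must intersect modulo $p$; hence $x^2+y^2+1\equiv 0 \pmod p$ has a solution, giving $mp=x^2+y^2+1^2+0^2$ with $0<m<p$ because $x^2+y^2+1<2(p/2)^2+1<p^2$.

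\emph{Step 3: infinite descent on $m$.} This is the crux of the argument, and the step I expect to be the main obstacle to write out carefully. Let $m$ be the \emph{least} positive integer with $mp$ a sum of four squares, say $mp=x_1^2+x_2^2+x_3^2+x_4^2$; I claim $m=1$. Suppose $m>1$. If $m$ is even, a parity argument on how many $x_i$ are even lets one pair them so that $\frac{m}{2}p$ is again a sum of four squares, contradicting minimality. If $m$ is odd with $m>1$, choose $y_i\equiv x_i \pmod m$ with $|y_i|<\frac{m}{2}$; then $\sum y_i^2\equiv 0 \pmod m$, so $\sum y_i^2 = mr$ for some $r$ with $0\leq r<m$, and $r\neq 0$ (else $m\mid x_i$ for all $i$, forcing $m^2\mid mp$, i.e. $m\mid p$, impossible for $1<m<p$). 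Applying the identity \eqref{eq:euler4sq} to $(mp)(mr)=\bigl(\sum x_i^2\bigr)\bigl(\sum y_i^2\bigr)$ produces $m^2 rp = c_1^2+c_2^2+c_3^2+c_4^2$, and a direct check using $y_i\equiv x_i\pmod m$ shows each $c_i\equiv 0\pmod m$; dividing through by $m^2$ gives $rp$ as a sum of four squares with $0<r<m$, contradicting the minimality of $m$. Hence $m=1$ and $p$ itself is a sum of four squares, completing the proof. The delicate points to get right are the divisibility $m\mid c_i$ in the odd case and the non-triviality $r\neq 0$; both are short congruence computations but must be done with care.
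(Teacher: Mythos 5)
The paper does not prove this statement; it is quoted as a classical theorem with a reference to Dickson's \emph{History of the Theory of Numbers}, so there is no in-paper argument to compare against. Your outline is the standard Euler--Lagrange proof (Euler's four-square identity to reduce to odd primes, the pigeonhole argument producing $mp=x^2+y^2+1$ with $0<m<p$, and the descent on the minimal such $m$), and all the steps you flag as delicate --- the congruences $c_i\equiv 0\pmod m$ and the nonvanishing of $r$ --- are handled correctly, so the proposal is a correct and complete plan for this classical result.
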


In 1798 Legendre proved a much deeper statement which described exactly which numbers needed all four squares  \cite[p. 261]{D}.
\begin{theorem}[Legendre's Three Squares Theorem]\label{thm:Legendre}
The set of positive integers that are not sums of three or fewer squares is the set
$$
\left\{m\in \Z_{>0}:\,\, m=4^k(8t+7), \quad \text{for some}\quad k,t\in \Z_{\geq0} \right\}.
$$
\end{theorem}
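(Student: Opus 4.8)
I would prove the two inclusions separately. The easier one --- that no integer of the form $4^k(8t+7)$ is a sum of three squares --- is elementary. Since the squares modulo $8$ lie in $\{0,1,4\}$, a sum of three of them is never congruent to $7$ modulo $8$, which settles the case $k=0$. For $k\geq 1$ I would argue by descent: if $4m=a^2+b^2+c^2$, then reducing modulo $4$ and using that squares are $\equiv 0$ or $1\pmod 4$ forces $a,b,c$ all even, so $m=(a/2)^2+(b/2)^2+(c/2)^2$; iterating reduces $4^k(8t+7)$ to $8t+7$, already excluded. Thus the left-hand set is contained in $\{4^k(8t+7)\}$, and the content of the theorem is the reverse inclusion.

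For the reverse inclusion, let $m>0$ not be of the form $4^k(8t+7)$; I must produce $a,b,c\in\Z$ with $m=a^2+b^2+c^2$. The plan has two steps. First, reduce integral representability to rational representability via the Davenport--Cassels lemma: the positive definite form $q(x,y,z)=x^2+y^2+z^2$ has the property that every point of $\Q^3$ lies at Euclidean distance $<1$ from a point of $\Z^3$ (round each coordinate; the distance is at most $\frac{\sqrt{3}}{2}$), and for such forms a rational number represented by $q$ over $\Q$ is already represented over $\Z$. So it suffices to find $a,b,c\in\Q$ with $m=a^2+b^2+c^2$.

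Second, I would establish rational representability. One route is the Hasse--Minkowski local--global principle: $m$ is a sum of three rational squares if and only if the equation is solvable over $\R$ and over $\mathbb{Q}_p$ for every prime $p$. Solvability over $\R$ is automatic since $m>0$; for odd $p$ a ternary quadratic form over $\mathbb{Q}_p$ represents every nonzero class by an elementary residue-field count; and a direct $2$-adic computation shows $m=x^2+y^2+z^2$ is solvable over $\mathbb{Q}_2$ precisely when $m\notin\{4^k(8t+7)\}$. Alternatively, to stay self-contained, one can bypass Hasse--Minkowski and instead use Dirichlet's theorem on primes in arithmetic progressions to select an auxiliary prime for which a suitable multiple of $m$ is a sum of two squares by Fermat's two-squares theorem, then descend, splitting into cases according to $m\bmod 8$ (and $m\bmod 4$).

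The main obstacle is this second step: it is where the genuine arithmetic content lives, whether through quadratic reciprocity and the $2$-adic analysis underlying Hasse--Minkowski, or through Dirichlet's theorem on primes in progressions. By contrast, the congruence-and-descent argument for the first inclusion and the Davenport--Cassels reduction are routine.
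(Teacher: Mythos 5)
The paper does not prove this statement at all: it is quoted as a classical theorem of Legendre with a reference to Dickson's \emph{History of the Theory of Numbers}, and it is then used as a black box in the minimization arguments of Theorems~\ref{thm:B} and \ref{thm:C}. Your outline is therefore not comparable to anything in the paper, but it is the standard modern proof (essentially the one in Serre's \emph{Course in Arithmetic}) and is correct as a sketch. The easy inclusion is complete as you state it: squares lie in $\{0,1,4\}$ mod $8$, so three of them never sum to $7$ mod $8$, and the descent through powers of $4$ is forced by reduction mod $4$. The Davenport--Cassels reduction from rational to integral representability is also correctly invoked, since $\sqrt{3}/2<1$. The genuine content is, as you say, the rational representability of every $m\neq 4^k(8t+7)$, and there you only name the tools (Hasse--Minkowski, or Dirichlet plus the two-squares theorem) rather than carry out the argument; in particular the $2$-adic computation showing that $x^2+y^2+z^2$ represents $m$ over $\mathbb{Q}_2$ exactly when $m$ avoids the excluded set is asserted, not performed. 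One small imprecision: it is not true that an arbitrary ternary quadratic form over $\mathbb{Q}_p$ ($p$ odd) represents every nonzero class --- an anisotropic ternary form misses exactly one square class --- but for the specific form $x^2+y^2+z^2$, whose discriminant is a $p$-adic unit, the residue-field count plus Hensel's lemma does give universality, so your argument survives once restricted to this form. As a replacement for the paper's citation your sketch is sound; as a self-contained proof it still owes the local analysis at $2$ (or the Dirichlet-based case analysis) in full.
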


After this, it was natural to think which numbers could be written as sum of two squares. A complete answer to this question was given by Euler  \cite[p. 230]{D}.
\begin{theorem}[Euler]\label{thm:Euler}
A positive integer $m>1$ can be written as a sum of two squares if and only if every prime factor of $m$ which is congruent to $3 \pmod 4$ occurs with even exponent. 
\end{theorem}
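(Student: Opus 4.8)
The plan is to reduce Euler's characterization to the case of prime $m$ by exploiting the fact that the set $\mathcal{S}$ of integers of the form $a^2+b^2$, $a,b\in\Z$, is closed under multiplication. This multiplicativity is the Brahmagupta--Fibonacci identity
\[
(a^2+b^2)(c^2+d^2)=(ac-bd)^2+(ad+bc)^2,
\]
which is just the multiplicativity of the norm $N(a+bi)=a^2+b^2$ on the Gaussian integers $\Z[i]$. Two elementary remarks open the argument: since every square is $\equiv 0$ or $1\pmod 4$, a number $\equiv 3\pmod 4$ is never in $\mathcal{S}$; and $2=1^2+1^2\in\mathcal{S}$ while $q^{2}=q^2+0^2\in\mathcal{S}$ for every prime $q$.

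First I would settle the prime case: a prime $p$ lies in $\mathcal{S}$ if and only if $p=2$ or $p\equiv 1\pmod 4$. The ``only if'' part is the remark above. For the ``if'' part, which carries the real content, I would use that $-1$ is a quadratic residue modulo $p$ when $p\equiv 1\pmod 4$ (for instance because $\bigl(\tfrac{p-1}{2}\bigr)!$ squares to $-1$ modulo $p$, by Wilson's theorem), fix $x$ with $x^2\equiv -1\pmod p$, and run a pigeonhole/Thue argument: among the $(\lfloor\sqrt p\rfloor+1)^2>p$ pairs $(u,v)$ with $0\le u,v\le \lfloor\sqrt p\rfloor$, two coincide modulo $p$ under $(u,v)\mapsto u-xv$, and their difference $(a,b)$ satisfies $a\equiv xb\pmod p$ with $0<a^2+b^2<2p$ and $a^2+b^2\equiv (x^2+1)b^2\equiv 0\pmod p$, forcing $a^2+b^2=p$. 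Alternatively one can invoke that $\Z[i]$ is Euclidean, hence a PID, and that such a $p$ is not a Gaussian prime since $p\mid x^2+1=(x+i)(x-i)$ while $p\nmid x\pm i$. This is the step I expect to be the main obstacle, since it is the only place where something genuinely nontrivial, rather than bookkeeping, is required.

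Next I would record the obstruction attached to a prime $q\equiv 3\pmod 4$: if $q\mid a^2+b^2$ then $q\mid a$ and $q\mid b$. Indeed, if $q\nmid b$ then $b$ is invertible modulo $q$ and $(ab^{-1})^2\equiv -1\pmod q$, contradicting that $-1$ is a non-residue modulo such $q$; hence $q\mid b$, and then $q\mid a$ too. Consequently, if $q^{k}$ exactly divides $m=a^2+b^2$, writing $a=qa_1$, $b=qb_1$ yields $m/q^2=a_1^2+b_1^2$ with $q^{k-2}$ exactly dividing it; descending, one would eventually reach a number exactly divisible by $q^{1}$ lying in $\mathcal{S}$, which is impossible. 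Hence $k$ is even, which proves the ``only if'' direction of the theorem.

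For the converse, given $m>1$ I would write $m=2^{a}\prod_{p\equiv 1\,(4)}p^{\,b_p}\prod_{q\equiv 3\,(4)}q^{\,2c_q}$, the last exponents being even by hypothesis. Each factor lies in $\mathcal{S}$: $2\in\mathcal{S}$; every $p\equiv 1\pmod 4$ is in $\mathcal{S}$ by the prime case; and $q^{2c_q}=(q^{c_q})^2+0^2\in\mathcal{S}$. By the Brahmagupta--Fibonacci identity $\mathcal{S}$ is closed under products, so $m\in\mathcal{S}$, completing the proof.
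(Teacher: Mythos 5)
Your proof is correct and complete: the multiplicativity via the Brahmagupta--Fibonacci identity, the Thue/pigeonhole (or Gaussian integer) argument for primes $p\equiv 1\pmod 4$, the descent using the fact that a prime $q\equiv 3\pmod 4$ dividing $a^2+b^2$ must divide both $a$ and $b$, and the final assembly are all sound, and this is the standard route to Euler's two-squares characterization. Note that the paper itself offers no proof of this statement --- it is quoted as a classical result with a reference to Dickson --- so there is nothing in the text to compare your argument against; your write-up supplies exactly the argument the citation stands in for.
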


\begin{example}
The integer  $m={\bf 245}=5\cdot 7^2$ can be written as a sum of two squares. In particular,  
$245= 4\cdot7^2 + 7^2 = 14^2 + 7^2.$
As the number $m={\bf 105}$ is not divisible by $4$ and is congruent to $1 \pmod 8$, one concludes that it is the sum of $3$ or fewer squares. However,  since $105=3\cdot 5\cdot 7$ has a prime factor congruent to $3 \pmod 4$ occurring with odd exponent, it cannot be written as a sum of $2$ squares. For instance, we have
$
105=10^2+2^2+1^2.
$
Since $m={\bf 60}=4\cdot 15 = 4\cdot (8+7)$, we know from Theorem~\ref{thm:Legendre} that it cannot be represented as a sum of $3$ or fewer squares so we really need $4$ squares. For example,
$
60=6^2+4^2+2^2+2^2.$
\end{example}

Let us now see what happens with triangular numbers. The part of Fermat's statement regarding these numbers was first proved by Gauss \cite[p. 17]{D}.
\begin{theorem}[Gauss]\label{thm:Gauss}
Every nonegative number is the sum of three or fewer triangular numbers. 
\end{theorem}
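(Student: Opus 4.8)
\section*{Proof proposal}

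The plan is to deduce Gauss's statement from Legendre's Three Squares Theorem (Theorem~\ref{thm:Legendre}) via the classical substitution $n \mapsto 8n+3$. Write $T_k := \frac{k(k+1)}{2}$ for the $k$-th triangular number. The starting point is the elementary identity
\begin{equation*}
8T_k + 1 = 4k^2 + 4k + 1 = (2k+1)^2,
\end{equation*}
which shows that a representation $n = T_a + T_b + T_c$ is equivalent to a representation
\begin{equation*}
8n + 3 = (2a+1)^2 + (2b+1)^2 + (2c+1)^2
\end{equation*}
of $8n+3$ as a sum of three \emph{odd} squares. Hence it suffices to prove that, for every $n \in \Z_{\geq0}$, the integer $8n+3$ is a sum of three odd squares.

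For the first half --- that $8n+3$ is a sum of three squares at all --- I would invoke Legendre's theorem: since $8n+3$ is odd it is not divisible by $4$, so if it were of the excluded form $4^k(8t+7)$ we would be forced to take $k=0$ and $8n+3 \equiv 7 \pmod 8$, contradicting $8n+3 \equiv 3 \pmod 8$. Therefore $8n+3 = x^2 + y^2 + z^2$ for some $x,y,z \in \Z_{\geq0}$ (replacing each variable by its absolute value if necessary). For the second half --- that the three squares are automatically odd --- I would use that every square is congruent to $0$, $1$ or $4$ modulo $8$; a short case check shows the only way a sum of three such residues equals $3 \pmod 8$ is $1+1+1$, forcing $x,y,z$ all odd. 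Writing $x = 2a+1$, $y = 2b+1$, $z = 2c+1$ with $a,b,c \in \Z_{\geq0}$ and running the identity above backwards gives $8n+3 = 8(T_a+T_b+T_c)+3$, i.e.\ $n = T_a + T_b + T_c$. Since $T_0 = 0$, this is a representation as a sum of three or fewer triangular numbers, completing the argument.

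I expect no serious obstacle once Legendre's theorem is granted: the proof is entirely a bookkeeping of residues modulo $8$, together with the reduction via $8T_k+1=(2k+1)^2$. The only points requiring a little care are the degenerate cases (e.g.\ $n=0$, or $n$ small, where some $T_k$ must be taken to be $0$), which are handled uniformly by allowing $T_0 = 0$, and the legitimacy of the parametrization $x = 2a+1$ with $a \geq 0$, which is why one first normalizes $x,y,z$ to be nonnegative. Should one wish to avoid citing Legendre, the genuinely hard input --- that the ternary form $x^2+y^2+z^2$ represents all integers not of the shape $4^k(8t+7)$ --- would have to be re-established (via the theory of ternary quadratic forms, or using Dirichlet's theorem on primes in arithmetic progressions); but since Theorem~\ref{thm:Legendre} is already available to us, this is unnecessary.
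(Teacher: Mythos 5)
Your proposal is correct, and it is worth noting that the paper itself offers no proof of this statement at all: Theorem~\ref{thm:Gauss} is quoted from Dickson's \emph{History of the Theory of Numbers} as one of several classical inputs (alongside Theorems~\ref{thm:Lagrange}, \ref{thm:Legendre}, \ref{thm:Euler} and \ref{thm:Ewell}) that feed into the minimization arguments of Theorems~\ref{thm:B} and \ref{thm:C}. What you supply is the standard deduction of Gauss's ``Eureka'' theorem from Legendre's Three Squares Theorem, and every step checks out: the identity $8T_k+1=(2k+1)^2$ converts $n=T_a+T_b+T_c$ into $8n+3=(2a+1)^2+(2b+1)^2+(2c+1)^2$; since $8n+3$ is odd and $\equiv 3\pmod 8$ it avoids the excluded set $4^k(8t+7)$ of Theorem~\ref{thm:Legendre}; and the residue check (squares are $0,1,4\bmod 8$, and $1+1+1$ is the only combination summing to $3\bmod 8$) forces all three squares to be odd, so the parametrization $x=2a+1$ with $a\geq 0$ runs the identity backwards. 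The degenerate cases are correctly absorbed by $T_0=0$. The one honest caveat is the one you already raise: this argument does not make the result self-contained, since the genuinely deep input (Legendre's theorem, or equivalently the representation theory of the ternary form $x^2+y^2+z^2$) is itself only cited in the paper. But as a reduction of one cited classical theorem to another, your argument is complete and is exactly the route a careful reader would want spelled out.
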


After this result, Ewell \cite{E} gave a simple description of those numbers that are sums of two triangular numbers.
\begin{theorem}[Ewell]\label{thm:Ewell}
A positive integer $m$ can be represented as a sum of two triangular numbers if and only if every prime factor of $4m+1$ which is congruent to $3 \pmod 4$ occurs with even exponent. 
\end{theorem}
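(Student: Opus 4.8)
\textbf{Proof proposal for Theorem~\ref{thm:Ewell}.}

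The plan is to reduce the two-triangular-numbers representability of $m$ to the representability of $4m+1$ as a sum of two squares, and then quote Theorem~\ref{thm:Euler}. The key algebraic observation is the identity
\begin{equation}\label{eq:TriSquares}
8\cdot\frac{k(k+1)}{2}+1 = 4k^2+4k+1 = (2k+1)^2,
\end{equation}
which sets up a bijection between triangular numbers and odd squares. First I would show the forward direction: if $m = T_a + T_b$ with $T_k=\frac{k(k+1)}{2}$, then using \eqref{eq:TriSquares} twice,
\begin{equation}\label{eq:FwdDir}
4m+1 = \tfrac12\bigl((2a+1)^2+(2b+1)^2\bigr),
\end{equation}
so $8m+2=(2a+1)^2+(2b+1)^2$ is a sum of two (odd) squares, and hence $4m+1$ is too, since $4m+1 = \left(\frac{(2a+1)+(2b+1)}{2}\right)^2+\left(\frac{(2a+1)-(2b+1)}{2}\right)^2 = (a+b+1)^2+(a-b)^2$, both integers. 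By Theorem~\ref{thm:Euler} (noting $4m+1>1$), every prime factor of $4m+1$ congruent to $3\pmod 4$ occurs with even exponent.

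Conversely, suppose every prime factor of $4m+1$ congruent to $3\pmod 4$ occurs with even exponent. By Theorem~\ref{thm:Euler}, $4m+1 = x^2+y^2$ for some nonnegative integers $x,y$. The crucial point is a parity argument: since $x^2+y^2 \equiv 1 \pmod 4$, exactly one of $x,y$ is even and the other is odd; say $x$ is even, $y$ odd. Then $2x^2+2y^2 = 8m+2$, and I would rewrite $(x+y)^2+(x-y)^2 = 8m+2$ where now both $x+y$ and $x-y$ are odd. Writing $x+y = 2a+1$ and $x-y = 2b+1$ (up to sign, which is harmless after squaring), \eqref{eq:TriSquares} gives $8T_a+1+8T_b+1 = 8m+2$, i.e.\ $T_a+T_b = m$. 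One must check that $a,b$ can be taken nonnegative: since $T_k = T_{-k-1}$, replacing $2a+1$ by $|2a+1|$ or $2b+1$ by $|2b+1|$ only changes the sign of the odd integer, and for any odd integer $2j+1$ there is a nonnegative $k$ with $2k+1 = |2j+1|$, giving the same triangular number. So $m$ is a sum of two triangular numbers.

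The main obstacle — really the only subtle point — is the careful bookkeeping of parities and signs in passing between the representation $4m+1 = x^2+y^2$ and the representation $8m+2 = (\text{odd})^2 + (\text{odd})^2$: one needs to know that the square-sum decomposition of $4m+1$ forces one even and one odd summand (an easy mod-$4$ check), and then that the standard $(x+y,x-y)$ substitution produces two odd numbers whose associated triangular numbers sum to $m$. Everything else is the identity \eqref{eq:TriSquares} and a citation of Euler's theorem; a small remark handling the edge cases $m=0,1$ (and the hypothesis $m>1$ implicit in applying Theorem~\ref{thm:Euler} to $4m+1$, which is automatic since $4m+1\geq 5$ for $m\geq 1$) completes the argument.
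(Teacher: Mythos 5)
Your argument is correct. Note, however, that the paper does not prove this statement at all: Theorem~\ref{thm:Ewell} is quoted as a known result with a citation to Ewell's 1992 article, so there is no in-paper proof to compare against. What you have written is the standard elementary reduction -- the bijection $T_k \mapsto (2k+1)^2$ between triangular numbers and odd squares, the identity $(x+y)^2+(x-y)^2=2(x^2+y^2)$ to pass between representations of $8m+2$ by two odd squares and representations of $4m+1$ by two squares of opposite parity, and then Euler's Theorem~\ref{thm:Euler} -- and your handling of the two genuinely delicate points (the mod-$4$ parity forcing one even and one odd summand in $4m+1=x^2+y^2$, and the sign/nonnegativity of $a,b$ via $|2j+1|$) is complete. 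Since $4m+1\geq 5$ for every positive integer $m$, the hypothesis $m>1$ in Euler's theorem is indeed automatic, as you observe; the proof is self-contained given the results already quoted in Section~\ref{sec:prent}.
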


\begin{example}
Taking $m={\bf 106}$ one obtains $4m+1= 425 = 5^2\cdot 17$ and so $m$ can be written as a sum of two triangular numbers. For instance,
$
106= 105 +1= \frac{14 \cdot 15}{2}+\frac{1\cdot 2}{2}.
$
On the other hand, if one takes $m={\bf 59}$, then $4m+1= 237=3\cdot 79$ and so, by Theorem~\ref{thm:Ewell}, $m$  cannot be written as a sum of $2$ triangular numbers. For instance we have
$
59=28+21+10=\frac{7\cdot 8}{2}+\frac{6\cdot 7}{2}+\frac{4\cdot 5}{2}.
$
\end{example}
\section{A minimization problem} \label{s3}
\subsection{The hypothesis $c_1c_{n-1}[M]=0$}\label{the hypothesis}
Let us now return to our initial problem of obtaining lower bounds for the number of fixed points of an almost complex circle action.
In most of  the results presented in Section~\ref{s2} the  lower bounds for the number of fixed points of a  circle action are obtained by retrieving information from  a nonvanishing Chern  number. In Theorem~\ref{general}, the crucial hypothesis for the establishment of the lower bound is the existence of  a value  $k$ of the Chern map $c_1(M)$ for which
$$
\sum_{\tiny{\begin{array}{c}p\in M^{S^1}\\ c_1(M)(p)=k\end{array}}} \frac{1}\Lambda_p \neq 0,
$$ 
where $\Lambda_p$ is the product of the weights in the isotropy representation $T_pM$. This is trivially achieved whenever the Chern map is somewhere injective \cite{PeTo2010} but also when $c_1^n[M]\neq 0$ as pointed out in \cite[Lemma 3.1]{LiLi2010}, leading to Theorem~\ref{thm:Hattori}.

In this work we focus on the situation in which a particular Chern number vanishes.    The only known expressions of Chern numbers in terms of  number of fixed points concern $c_n[M]$ (which equals the Euler characteristic and  the number of fixed points, when the fixed point set is  discrete) and  $c_1c_{n-1}[M]$ \cite[Theorem 1.2]{GoSa12}. Thus the natural candidate is $c_1c_{n-1}[M]=0$.

Note  that $c_1c_{n-1}[M]=0$ is  satisfied under the stronger condition that  $c_1$ or $c_{n-1}$ are torsion in integer cohomology. 
In the case in which $c_1$ is torsion, the following Lemma proves that the results in \cite{Ha84, PeTo2010} cannot be applied.
\begin{lemma}\label{lemma:torsion}
Let $(M,J)$ be a compact almost complex manifold such that  $c_1$ is a torsion element in $H^2(M,\mathbb{Z})$. If $M$ admits a $J$-preserving circle action with a discrete fixed point set then the Chern class map $c_1^{S^1}(M):M^{S^1}\to \mathbb{Z}$ is identically zero.
\end{lemma}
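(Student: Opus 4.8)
The plan is to use the localization theorem in equivariant cohomology (equivalently, the Atiyah--Bott--Berline--Vergne formula, or its K-theoretic incarnation) to relate the torsion of $c_1$ to the behaviour of the equivariant first Chern class at the fixed points. First I would recall that since the fixed point set is discrete, at each $p\in M^{S^1}$ the isotropy representation $T_pM$ splits into one-dimensional complex weight spaces with nonzero integer weights $w_{p,1},\dots,w_{p,n}$, and that $c_1^{S^1}(M)(p)=\sum_{j=1}^n w_{p,j}\in\Z\cong H^2(BS^1;\Z)$. The goal is to show all these integers are zero.

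The key observation is that the ordinary class $c_1\in H^2(M;\Z)$ is the image of the equivariant class $c_1^{S^1}(M)\in H^2_{S^1}(M;\Z)$ under the restriction map $H^2_{S^1}(M;\Z)\to H^2(M;\Z)$ (setting the equivariant parameter to zero). If $c_1$ is torsion, say $N c_1=0$, then $N c_1^{S^1}(M)$ maps to zero, hence by the long exact sequence (or by the structure of $H^*_{S^1}(M;\Q)$ as a module over $H^*(BS^1;\Q)=\Q[t]$) the class $N c_1^{S^1}(M)$, and therefore $c_1^{S^1}(M)$ itself, lies in the ideal generated by $t$ rationally — that is, $c_1^{S^1}(M)$ restricted to $M$ vanishes rationally, which is automatic. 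The real content is squeezing out the values at fixed points: one restricts $c_1^{S^1}(M)$ to each fixed point $p$ and invokes the injectivity of the restriction map $H^*_{S^1}(M;\Q)\to H^*_{S^1}(M^{S^1};\Q)=\bigoplus_p \Q[t]$ (Borel localization), so that knowing $c_1^{S^1}(M)$ rationally on $M$ does \emph{not} immediately kill the fixed-point values; instead I would run the ABBV integration formula on the degree-$2n$ class $\bigl(c_1^{S^1}(M)\bigr)$ paired suitably, or more directly compare the equivariant cohomology class $c_1^{S^1}(M)\in H^2_{S^1}(M;\Z)$ with the pullback from $BS^1$. Concretely: the composition $H^2(BS^1;\Z)\to H^2_{S^1}(M;\Z)\to H^2_{S^1}(p;\Z)$ sends $t\mapsto t$; since $H^2_{S^1}(M;\Z)$ is generated over $H^0$ by $t$ together with classes restricting to $c_1$, and $c_1$ is torsion, one deduces $c_1^{S^1}(M)=\lambda t + (\text{torsion})$ for a fixed integer $\lambda$ independent of $p$ after restriction; evaluating at two fixed points (the fixed set is nonempty) and using that the torsion part restricts to a torsion element of $H^2_{S^1}(p;\Z)=\Z$, which is torsion-free, one gets $c_1^{S^1}(M)(p)=\lambda t$ with the same $\lambda$ at every $p$.

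It then remains to show $\lambda=0$. For this I would integrate: by ABBV,
\begin{equation}
\int_M 1 = \sum_{p\in M^{S^1}} \frac{1}{\prod_{j} w_{p,j}\, t^n}
\end{equation}
lives in degree $-2n$ and vanishes, giving the standard relation $\sum_p 1/\Lambda_p=0$; similarly integrating the equivariant class $c_1^{S^1}(M)$ (of degree $2$) against the fundamental class yields, in the appropriate degree, a relation forcing $\sum_p c_1^{S^1}(M)(p)/\Lambda_p$ to be the coefficient extracted from $c_1c_{n-1}[M]$-type data — but more cheaply, since we have shown $c_1^{S^1}(M)(p)=\lambda t$ uniformly, the nonequivariant limit (divide by $t$, set $t=0$) recovers $c_1$ as $\lambda$ times the image of the generator, which must be torsion; as $\lambda$ is an integer and the generator's image can be normalized, one forces $\lambda=0$, whence $c_1^{S^1}(M)(p)=0$ for all $p$. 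I expect the main obstacle to be the bookkeeping with integer versus rational coefficients — making sure the torsion ambiguity really does disappear upon restriction to a point, and correctly identifying $c_1^{S^1}(M)(p)$ with the restriction of the equivariant Chern class rather than with something shifted by a multiple of $t$ — so I would be careful to phrase everything in terms of the splitting $H^2_{S^1}(M;\Z)$ and the explicit weight description $c_1^{S^1}(M)(p)=\sum_j w_{p,j}$, and to use that a nonempty discrete fixed set gives at least one, hence (by connectedness and the same argument run at every point) a uniform constraint that pins the constant to zero.
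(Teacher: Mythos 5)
Your first step --- showing that the restriction of $c_1^{S^1}(M)$ to the fixed points is a \emph{constant} integer $\lambda$ (i.e.\ equals $\lambda t$ at every $p$), using that $kc_1=0$ forces $kc_1^{S^1}$ to lie in the image of $H^2(BS^1;\Z)$ modulo classes that die in the torsion-free group $H^2_{S^1}(p;\Z)\cong\Z$ --- is essentially the paper's argument and is fine. The gap is in your second step, where you try to force $\lambda=0$. Taking the ``nonequivariant limit'' of $\lambda t$ gives you nothing: the generator $t$ restricts to $0$ in $H^2(M;\Z)$, so the statement ``$c_1^{S^1}(M)=\lambda t+(\text{torsion})$'' restricts to ``$c_1$ is torsion'' for \emph{every} value of $\lambda$; no constraint on $\lambda$ survives. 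Your alternative ABBV route fares no better: the identity $\sum_p c_1^{S^1}(M)(p)/\Lambda_p=0$ holds automatically by degree reasons whenever $n\geq 2$ (the integrand has equivariant degree $2<2n$), and with $c_1^{S^1}(M)(p)=\lambda t$ it reduces to $\lambda\sum_p 1/\Lambda_p=0$, which is the trivial relation again. So nothing in your proposal rules out a nonzero constant.

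What is actually needed is a genuine input about the weights of the action, namely Hattori's Proposition 2.11 (cited in the paper): for a $J$-preserving circle action with discrete fixed point set, $\sum_{p\in M^{S^1}}c_1^{S^1}(M)(p)=0$, because each integer $w$ occurs as an isotropy weight exactly as often as $-w$ when counted over the whole fixed point set. Combined with constancy and the nonemptiness of $M^{S^1}$, this forces $\lambda\cdot\lvert M^{S^1}\rvert=0$ and hence $\lambda=0$. Without this (or an equivalent weight-cancellation statement), your argument does not close.
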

\begin{proof}
Since  $c_1$ is a torsion element in $H^2(M,\mathbb{Z})$, there exists $k\in \mathbb{Z}$ such that $kc_1=0$. Then the restriction of the equivariant extention $k\, c_1^{S^1}\in H_{S^1}^2(M,\mathbb{Z})$  to the fixed point set is constant, implying that the Chern class map is constant. Since $c_1^{S^1}(M)(p)$ coincides with the sum of the isotropy weights at $p\in M^{S^1}$, Proposition 2.11 in \cite{Ha84} implies that  
$$\sum_{p\in M^{S^1}} c_1^{S^1}(M)(p)=0$$  
and so  this constant must be zero.
\end{proof}

Note that if $M$ is a $6$-dimensional compact connected symplectic manifold, the action is Hamiltonian if and only if $c_1c_2[M]\neq 0$. Indeed, we have the following proposition.

\begin{prop} \label{cv}
Suppose that $S^1$ acts symplectically 
on a compact  connected $6$\--dimensional symplectic manifold
$M$ with nonempty discrete fixed point set. Then the $S^1$\--action is Hamiltonian
if and only if $c_1c_2[M]\neq 0$.
\end{prop}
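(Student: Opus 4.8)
The plan is to reduce this to known facts about symplectic circle actions in dimension $6$ together with the fixed-point formula for $c_1c_{n-1}[M]$ from \cite[Theorem 1.2]{GoSa12}. First I would recall the easy direction: if the action is Hamiltonian, then by Proposition~\ref{prop} (equivalently Proposition~\ref{prop}, the $n+1$ bound) the moment map is a perfect Morse function whose critical set is $M^{S^1}$, so there are $\geq 4$ isolated fixed points with at least one of index $0$ and one of index $2n=6$. For a Hamiltonian action, one knows that the total Chern number $c_1 c_{n-1}[M]$ can be computed via localization, and for $6$-manifolds it is a classical computation (going back to the structure theory of Hamiltonian $S^1$-spaces, e.g.\ via the minimal and maximal fixed components) that $c_1c_2[M]>0$; in particular it is nonzero. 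Alternatively, and more in the spirit of this paper, one inserts the Hamiltonian hypothesis into the \cite{GoSa12} formula and checks positivity directly. Either way, Hamiltonian $\Rightarrow c_1c_2[M]\neq 0$.

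For the converse I would argue by contraposition: suppose the action is \emph{not} Hamiltonian and show $c_1c_2[M]=0$. The key input is the dichotomy for symplectic $S^1$-actions on compact symplectic $6$-manifolds with isolated fixed points: a non-Hamiltonian symplectic circle action on a compact symplectic manifold has $\iota_{\mathcal{X}_M}\omega$ closed but not exact, so it represents a nonzero class in $H^1(M;\R)$, and on a $6$-manifold with discrete fixed point set one can use the structure results (the quotient/reduction picture, or the results on symplectic $S^1$-manifolds with isolated fixed points in low dimensions) to pin down the combinatorics of the isotropy weights. Concretely, I would feed the list of fixed-point weight data into the \cite[Theorem 1.2]{GoSa12} expression for $c_1c_{n-1}[M]$ in terms of the fixed points and show that, in the non-Hamiltonian case, the alternating/weighted sum telescopes to zero — this should parallel the computation that in the Hamiltonian case it is strictly positive, the difference being the presence versus absence of extremal fixed points forced by a moment map.

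The main obstacle, and where I would spend the most care, is the converse direction: controlling the fixed-point data of a general symplectic (possibly non-Hamiltonian) $S^1$-action on a compact symplectic $6$-manifold well enough to evaluate the \cite{GoSa12} formula. One cannot assume a moment map exists, so Morse theory is unavailable; instead I would rely on the Atiyah--Bott--Berline--Vergne localization formula applied to $1\in H^*_{S^1}(M)$ and to $c_1^{S^1}$, combined with the fact (Lemma~\ref{lemma:torsion}-type reasoning, and the vanishing $\sum_{p}c_1^{S^1}(M)(p)=0$ from \cite[Prop. 2.11]{Ha84}) that constrains the weights. A cleaner route may be to invoke directly the known classification-type statements for symplectic circle actions in dimension $6$ (there are only finitely many ``local models'' near the fixed points compatible with non-Hamiltonicity) and then verify $c_1c_2[M]=0$ case by case; this is bookkeeping rather than a new idea, but it is the step most likely to hide a subtlety, so I would be careful to treat the case where the circle action has nontrivial finite stabilizers along $2$-dimensional submanifolds, not just free-away-from-fixed-points actions.
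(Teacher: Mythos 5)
Your forward direction is fine: inserting the Hamiltonian hypothesis ($N_i=b_{2i}$, Poincar\'e duality, $b_0=1$) into the formula of \cite[Theorem 1.2]{GoSa12} for $n=3$ gives $c_1c_2[M]=\sum_i N_i\bigl(6i(i-1)-6\bigr)=24N_0=24\neq 0$. But the converse, which you yourself flag as the delicate step, has a genuine gap. You propose to control the fixed-point data of a non-Hamiltonian symplectic $S^1$-action on a compact symplectic $6$-manifold by appealing to ``structure results'' or ``known classification-type statements'' for such actions and then doing case-by-case bookkeeping. No such classification exists: as the introduction of this paper emphasizes, it is not even known whether a non-Hamiltonian symplectic circle action with nonempty discrete fixed point set exists at all, so there is no list of local models over which to run your verification, and ABBV localization applied to $1$ and $c_1^{S^1}$ alone will not force the sum $24N_0$ to vanish.

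The missing ingredient is Feldman's theorem \cite{Feldman2001}: for a symplectic $S^1$-action on a compact symplectic manifold with isolated fixed points, the Todd genus equals $1$ if the action is Hamiltonian and $0$ if it is not. Since in dimension $6$ one has
$$
{\rm Todd}(M)=\int_M\frac{c_1c_2}{24},
$$
both implications follow at once: Hamiltonian gives $c_1c_2[M]=24$, non-Hamiltonian gives $c_1c_2[M]=0$. (Equivalently, in the notation above ${\rm Todd}(M)=N_0$, so Feldman's dichotomy is exactly the statement $N_0\in\{0,1\}$ that your telescoping computation would need.) This is the route the paper takes, and without Feldman's input — or some substitute ruling out $N_0>0$ in the non-Hamiltonian case — your argument for the converse does not close.
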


\begin{proof} 
The result follows from \cite{Feldman2001} and the fact that,  when $\dim(M)=6$, one has
$${\rm Todd}(M)=\displaystyle\int_{M}\frac{c_1c_2}{24}.$$ 
\end{proof}

In general, if the manifold is symplectic and  $c_1$ is torsion in integer cohomology, then, necessarily, the action is non-Hamiltonian. 
\begin{prop}\label{prop:Ham}
Let $(M,\omega)$ be a compact symplectic manifold such that $c_1$ is torsion in integer cohomology. Then $M$ does not admit any Hamiltonian circle action with isolated fixed points.
\end{prop}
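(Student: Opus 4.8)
The plan is to argue by contradiction using the classical fact, due to Atiyah--Guillemin--Sternberg, that a Hamiltonian circle action on a compact symplectic manifold with isolated fixed points forces the cohomology class $[\omega]$ to be, up to rescaling, an integral class lying in the subring generated by $c_1$ together with the fact that the Morse indices of the momentum map are even; more concretely, I would use that for a Hamiltonian $S^1$-action the momentum map is a perfect Morse function whose critical points are exactly the fixed points, so $M$ has no odd cohomology and $b_0=1$, and in particular $H^2(M,\R)$ is spanned by classes that restrict suitably. The key point I want is the following: if the action is Hamiltonian with momentum map $\mu$, then $\int_M \omega^n > 0$, and moreover one can relate $[\omega]$ to $c_1$. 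So first I would recall the precise statement of Proposition~\ref{prop} and the Frankel-type structure theory, then isolate the cohomological constraint that fails when $c_1$ is torsion.

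The cleanest route, I expect, is via the symplectic index/weight sum identities at fixed points. For a Hamiltonian $S^1$-action with isolated fixed points, at the minimum $p_{\min}$ of $\mu$ all isotropy weights are positive, so $c_1^{S^1}(M)(p_{\min}) = \sum (\text{weights}) > 0$ is strictly positive. On the other hand, Lemma~\ref{lemma:torsion} (whose hypotheses — $c_1$ torsion, $J$-preserving circle action, discrete fixed point set — are exactly met here, taking $J$ compatible with $\omega$) tells us that the Chern class map $c_1^{S^1}(M)\colon M^{S^1}\to\Z$ is identically zero. This is an immediate contradiction: the value at $p_{\min}$ is both strictly positive and zero. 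So the whole proof reduces to: (1) pick an $\omega$-compatible almost complex structure $J$, making $(M,J)$ an almost complex manifold with a $J$-preserving circle action; (2) invoke Lemma~\ref{lemma:torsion} to get that $c_1^{S^1}$ vanishes on fixed points; (3) invoke the Hamiltonian hypothesis to produce a fixed point (the minimum of $\mu$) where all weights have the same sign, hence the weight sum is nonzero; (4) derive the contradiction.

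I would structure the written proof as a short \textbf{proof by contradiction}: \emph{Suppose $M$ admits a Hamiltonian circle action with momentum map $\mu$ and isolated fixed points. Choose a $J \in \mathcal{J}(M,\omega)$ invariant under the $S^1$-action; then $(M,J)$ is almost complex and the action is $J$-preserving with discrete fixed point set, so Lemma~\ref{lemma:torsion} applies and $c_1^{S^1}(M)(p)=0$ for every $p\in M^{S^1}$. Let $p_0$ be a point at which $\mu$ attains its minimum; since $\mu$ is $S^1$-invariant, $p_0$ is a fixed point, and since it is a local minimum of the momentum map, the linearized action on $T_{p_0}M$ has all weights positive (the negative-weight subspace is the negative eigenspace of the Hessian of $\mu$ up to sign, which is trivial at a minimum). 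Hence $c_1^{S^1}(M)(p_0) = \sum_j \lambda_j > 0$, contradicting $c_1^{S^1}(M)(p_0)=0$.* That is essentially the entire argument. The one genuinely substantive ingredient is the identification of the isotropy weights at $p_0$ with the sign data of the Hessian of $\mu$, i.e.\ the local normal form for Hamiltonian $S^1$-actions near a fixed point; I would cite the standard reference rather than reprove it.

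The main obstacle is therefore not really an obstacle at all but a matter of making sure the hypotheses of Lemma~\ref{lemma:torsion} are legitimately available — in particular that "isolated fixed points" in the statement of Proposition~\ref{prop:Ham} indeed yields the "discrete fixed point set" required by the lemma (it does, on a compact manifold) and that one may choose an $S^1$-invariant compatible $J$ (standard, by averaging). A secondary subtlety worth a sentence: one should make sure $M$ is connected, or else argue componentwise, since $\mu$ attains a global minimum on each connected component and one only needs the existence of one fixed point with one-signed weights; I would simply note that it suffices to treat each component, or assume $M$ connected as in the theorem's phrasing. If for some reason one wanted to avoid Lemma~\ref{lemma:torsion}, an alternative is to use that $c_1$ torsion forces the Todd genus / equivariant Euler characteristic computations to vanish while Hamiltonicity forces $\chi(M) = |M^{S^1}| \ge n+1 > 0$ with positivity of certain localization contributions — but the Lemma~\ref{lemma:torsion} route is shorter and I would prefer it.
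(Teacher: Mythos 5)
Your argument is correct and coincides with the paper's own (second) proof: the paper notes that one can "alternatively use Lemma~\ref{lemma:torsion} since, if the action is Hamiltonian, then $c_1^{S^1}(M)(p)\neq 0$ at both the minimum and the maximum points of the momentum map," which is exactly your contradiction at $p_{\min}$. Your added care about choosing an invariant compatible $J$ and treating components separately is fine but not a substantive departure.
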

\begin{proof}
This follows immediately from Proposition 3.21 in  \cite{Ha84}, by using  a result of Feldman \cite{Feldman2001}  which states that the Todd genus associated to $M$ is either $1$  or $0$, according to whether 
the action is Hamiltonian or  not. Alternatively we can use Lemma~\ref{lemma:torsion} since, if the action is Hamiltonian, then $c_1^{S^1}(M)(p)\neq0$  at both the minimum and the maximum points of the momentum map.  
\end{proof}

Therefore, our results naturally apply to a class of compact symplectic manifolds that do not support any Hamiltonian circle action with isolated fixed points, namely symplectic manifolds whose first Chern class is torsion. 
 For example, symplectic Calabi Yau manifolds, i.e.\ symplectic manifolds with $c_1=0$ \cite{FP09}.

\subsection{Tools}
Let us then see how to obtain a lower bound for the number of fixed points of a $J$-preserving circle action on an almost complex manifold $(M,J)$ satisfying $c_1c_{n-1}[M]=0$. 

The first result that we need is the expression of $c_1c_{n-1}$ in terms of  numbers of fixed points.
\begin{theorem}[\cite{GoSa12}]\label{GS}
Let $(M,J)$ be a $2n$-dimensional compact connected almost complex  manifold equipped with an $S^1$\--action which preserves the almost complex structure $J$ and has a nonempty discrete fixed point set. For every $i = 0,\ldots, n$, let $N_i$ be the number of fixed points with exactly $i$ negative weights in the isotropy representation $T_pM$. Then
\begin{equation}\label{eq:GS}
c_1 c_{n-1} [M]:= \int_M c_1  c_{n-1}=\sum_{i=0}^n N_i\Big(6i(i-1)+\frac{5n-3n^2}{2}  \Big),
\end{equation}
where $c_1$ and $c_{n-1}$ are respectively the first and the $(n-1)$ Chern classes of $M$.
\end{theorem}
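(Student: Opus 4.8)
\begin{skproof}
The plan is to deduce formula \eqref{eq:GS} from two ingredients. Write $\chi_y(M)=\sum_{p=0}^n\chi^p(M)\,y^p$ for the Hirzebruch $\chi_y$-genus of a compact almost complex $2n$-manifold, so $\chi^p(M)$ is the coefficient of $y^p$ in $\int_M\prod_{j=1}^n Q_y(x_j)$, where $x_1,\dots,x_n$ are the Chern roots of $TM$ and $Q_y(x)=\dfrac{x\,(1+y\,e^{-x})}{1-e^{-x}}$. The ingredients are:
\begin{enumerate}
\itemsep2pt
\item[{\rm (a)}] a \emph{universal} identity among Chern numbers, valid for every compact almost complex $M^{2n}$:
\begin{equation}\label{eq:univ-sketch}
c_1c_{n-1}[M]\;=\;6\,\chi_y''(M)\big|_{y=-1}\;+\;\frac{5n-3n^2}{2}\;c_n[M];
\end{equation}
\item[{\rm (b)}] a \emph{localization} identity: for a $J$-preserving $S^1$-action on a compact almost complex $M^{2n}$ with discrete fixed point set, $\chi_y(M)=\sum_{i=0}^nN_i\,(-y)^i$.
\end{enumerate}
Granting these, Theorem~\ref{GS} is immediate: (b) gives $\chi_y''(M)\big|_{y=-1}=\sum_iN_i\,i(i-1)$, while $c_n[M]$ is the Euler characteristic, hence equals $|M^{S^1}|=\sum_iN_i$ under the discreteness hypothesis, so \eqref{eq:univ-sketch} becomes $c_1c_{n-1}[M]=\sum_iN_i\bigl(6i(i-1)+\tfrac{5n-3n^2}{2}\bigr)$, which is \eqref{eq:GS}.

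To prove (a) I would start from the factorization $Q_y(x)=(1+y)\operatorname{td}(x)-y\,x$, with $\operatorname{td}(x)=\dfrac{x}{1-e^{-x}}=1+\tfrac{x}{2}+\tfrac{x^2}{12}+\cdots$ the Todd series, and substitute $v=(1+y)^{-1}$ to get $\prod_jQ_y(x_j)=v^{-n}\prod_j\bigl((\operatorname{td}(x_j)-x_j)+v\,x_j\bigr)$. Expanding in $v$ and taking the part of cohomological degree $2n$ yields
\begin{equation*}
\chi_y(M)=\sum_{k=0}^n h_k(M)\,(1+y)^{\,n-k},\qquad h_k(M)=\int_M\Bigl[\ \sum_{|S|=k}\ \prod_{j\in S}x_j\ \prod_{j\notin S}\bigl(\operatorname{td}(x_j)-x_j\bigr)\Bigr].
\end{equation*}
Evaluating at $y=-1$ leaves only $k=n$, so $\chi_y(M)\big|_{y=-1}=h_n(M)=c_n[M]$; two differentiations leave only $k=n-2$, so $\chi_y''(M)\big|_{y=-1}=2\,h_{n-2}(M)$. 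Finally one computes $h_{n-2}(M)$: since $\operatorname{td}(x)-x=1-\tfrac{x}{2}+\tfrac{x^2}{12}+\cdots$, the pair $\{a,b\}=\{1,\dots,n\}\setminus S$ contributes $\bigl(\prod_{j\neq a,b}x_j\bigr)\bigl(\tfrac{x_a^2+x_b^2}{12}+\tfrac{x_ax_b}{4}\bigr)$, and summing over pairs and using the elementary identity $c_n\sum_{a\neq b}\tfrac{x_a}{x_b}=c_1c_{n-1}-n\,c_n$ among symmetric functions of the Chern roots gives $h_{n-2}(M)=\tfrac{1}{12}\bigl(c_1c_{n-1}[M]-n\,c_n[M]\bigr)+\tfrac{n(n-1)}{8}c_n[M]$, which rearranges to \eqref{eq:univ-sketch}. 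This is the only genuine computation; the thing to watch is the bookkeeping of the low-order Todd coefficients and of the constant $\tfrac{5n-3n^2}{2}$.

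To prove (b) I would combine the rigidity of the equivariant $\chi_y$-genus with the Atiyah--Segal--Singer fixed point theorem. Rigidity says that the $S^1$-equivariant $\chi_y$-genus $\chi_y(M;g)$ --- the equivariant index of the relevant $\overline{\partial}$-type operator --- does not depend on $g\in S^1$, so it equals $\chi_y(M)$. The fixed point theorem, applied to this operator, expresses it for generic $g$ as
\begin{equation*}
\chi_y(M;g)=\sum_{p\in M^{S^1}}\ \prod_{j=1}^n\frac{1+y\,g^{-w_j^p}}{1-g^{-w_j^p}},
\end{equation*}
where $w_1^p,\dots,w_n^p$ are the isotropy weights of $T_pM$. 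Since the right-hand side is a rational function of $g$ that is in fact constant, I may compute its limit as $g\to\infty$: a factor with $w_j^p>0$ tends to $1$ and a factor with $w_j^p<0$ tends to $-y$, so the term at $p$ tends to $(-y)^{n_-(p)}$, with $n_-(p)$ the number of negative weights at $p$. Summing, $\chi_y(M)=\sum_{p}(-y)^{n_-(p)}=\sum_iN_i\,(-y)^i$.

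The step I expect to be the main obstacle is the rigidity invoked in (b): for holomorphic actions on compact complex manifolds this is classical (Hirzebruch), and in the merely almost complex setting it follows from the rigidity of the Atiyah--Singer operators under actions of compact connected groups (or from rigidity of the elliptic genus), so it should be quoted rather than reproved. Everything else --- the generating-function identity (a) and the $g\to\infty$ evaluation in (b) --- is routine, the only delicate point being the sign conventions for the weights and for $\chi_y$.
\end{skproof}
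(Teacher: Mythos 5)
The paper itself does not prove Theorem~\ref{GS}; it imports it from \cite{GoSa12}, and your sketch reconstructs essentially the argument given there: the Kosniowski--Hattori type localization/rigidity identity $\chi_y(M)=\sum_i N_i(-y)^i$ combined with the Libgober--Wood identity $\chi_y''(M)\big|_{y=-1}=\tfrac16\,c_1c_{n-1}[M]+\tfrac{n(3n-5)}{12}\,c_n[M]$, which is your universal Chern-number relation rearranged. Both of your computations check out (the $h_{n-2}$ bookkeeping with $\operatorname{td}(x)-x=1-\tfrac{x}{2}+\tfrac{x^2}{12}+\cdots$ and the $g\to\infty$ limit of the Atiyah--Segal--Singer expression), so the proposal is a correct proof along the same lines as the cited source.
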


\begin{remark}
If $M$ is a $2n$-dimensional symplectic and the $S^1$-action is Hamiltonian then the number $N_i$ of fixed points with exactly $i$ negative weights in the corresponding isotropy representations coincides with the $2i$-th Betti  number $b_i(M)$ of $M$.  Consequently the expression for $c_1 c_{n-1} [M]$ given in \eqref{eq:GS} becomes
\begin{equation}\label{eq:GS2}
c_1 c_{n-1} [M] =\sum_{i=0}^n b_{2i}(M)\Big(6i(i-1)+\frac{5n-3n^2}{2}  \Big).
\end{equation}
For example, if $\dim M=4$, equation \eqref{eq:GS2} gives
\begin{equation}\label{eq:GS3}
c_1^2 [M] = 10b_0(M)-b_2(M),
\end{equation}
where we used the fact that $b_0(M)=b_4(M)$.
\end{remark}

\subsection{The minimization problem}

For each  $m \in \Z_{\geq 0}$ let us consider the functions $F_1,F_2:\Z^{m+1}\to \Z$ defined by
\begin{eqnarray}
F_1(N_0,\ldots,N_m)&:=& N_m+ 2\, \sum_{k=1}^{m} N_{m-k}; \label{f1} \\
F_2(N_0,\ldots,N_m)&:=&  2\, \sum_{k=0}^{m} N_{k}; \label{f2} \\
G_1(N_0,\ldots,N_m)&:=& -mN_m+ 2\, \sum_{k=1}^{m} (6k^2-m) \, N_{m-k} ;\nonumber\\
G_2(N_0,\ldots,N_m)& := & \sum_{k=0}^{m}\Big(6k(k+1)-(m-1)\Big)N_{m-k}.\nonumber
\end{eqnarray} 
Moreover, for $i \in \{1,\,2\}$, let
\begin{equation} \label{z}
\mathcal{Z}_i:=\left\{(N_0,\ldots,N_m) \in (\Z_{\geq 0})^{m+1}\mid  F_i(N_0,\ldots,N_m)>0,\,\, G_i(N_0,\ldots,N_m)=0  \right\}. 
\end{equation}
Then we have the following result.

\begin{theoremL} \label{ab}
Let $(M,J)$ be a $2n$-dimensional compact connected almost complex manifold equipped with a $J$-preserving  $S^1$\--action with 
nonempty, discrete fixed point set and 
such that $c_1c_{n-1}[M]=0$. 

For $m:=\lfloor\frac{n}{2}\rfloor$, let $F_1,\,F_2:\Z^{m+1}\to \Z$ be the functions defined 
respectively  in \eqref{f1} and  \eqref{f2}, and let $\mathcal{Z}_1,\mathcal{Z}_2$ be the sets given in \eqref{z}.
Then the  $S^1$\--action has at least $\mathcal{B}(n)$ fixed points, where
$$
\mathcal{B}(n):= 
\left\{ \begin{array}{rl}
   \min_{\mathcal{Z}_1}
   F_1 & \,\,\,\,\,\,\,\,\,\,\,\,\,\,\, \text{if $n$ is even};   \\
    \textup{\,} \\
    \min_{\mathcal{Z}_2}
 F_2  & \,\,\,\,\,\,\,\,\,\,\,\,\,\,\, \text{if $n$ is odd}.
  \end{array}   \right.
$$
\end{theoremL}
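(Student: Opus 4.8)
The starting point is the formula \eqref{eq:GS} of Theorem~\ref{GS}, which expresses $c_1c_{n-1}[M]$ as a linear combination of the numbers $N_i$ of fixed points with exactly $i$ negative weights. The key structural fact to exploit is the symmetry $N_i = N_{n-i}$, which holds for any $J$-preserving circle action with discrete fixed point set (this follows from reversing the circle: replacing the action $t\mapsto g(t)$ by $t\mapsto g(-t)$ turns a fixed point with $i$ negative weights into one with $n-i$ negative weights, and the almost complex structure is still preserved). Using this symmetry I would fold the sum $\sum_{i=0}^n N_i(\cdots)$ in half, re\-indexing so that the free variables become $N_0,\dots,N_m$ with $m=\lfloor n/2\rfloor$, treating the two parities of $n$ separately. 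In the even case $n=2m$ the middle term $N_m$ appears once while the others appear twice; in the odd case $n=2m+1$ every term $N_k$ for $k=0,\dots,m$ appears twice. Substituting $i=m-k$ (resp.\ $i=m+1-k$, after matching to the shape of $G_2$) and simplifying the quadratic coefficient $6i(i-1)+\tfrac{5n-3n^2}{2}$ should reproduce exactly the coefficients $-mN_m+2\sum(6k^2-m)N_{m-k}$ in $G_1$ for $n$ even, and $\sum(6k(k+1)-(m-1))N_{m-k}$ in $G_2$ for $n$ odd. This algebraic verification — that the folded version of \eqref{eq:GS} equals $G_1$ (up to a positive constant) when $n=2m$ and equals $G_2$ (up to a positive constant) when $n=2m+1$ — is the computational heart of the argument, but it is routine once the bookkeeping of indices is set up correctly.

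Granting this identification, the hypothesis $c_1c_{n-1}[M]=0$ translates into $G_i(N_0,\dots,N_m)=0$ for the appropriate $i\in\{1,2\}$. Meanwhile the total number of fixed points is $|M^{S^1}| = \sum_{i=0}^n N_i$; folding this sum using $N_i=N_{n-i}$ gives $|M^{S^1}| = N_m + 2\sum_{k=1}^m N_{m-k} = F_1(N_0,\dots,N_m)$ when $n=2m$, and $|M^{S^1}| = 2\sum_{k=0}^m N_k = F_2(N_0,\dots,N_m)$ when $n=2m+1$. Since the fixed point set is nonempty, we have $F_i(N_0,\dots,N_m)>0$, and all the $N_k$ are nonnegative integers, so $(N_0,\dots,N_m)\in\mathcal{Z}_i$. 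Therefore $|M^{S^1}| = F_i(N_0,\dots,N_m) \geq \min_{\mathcal{Z}_i} F_i = \mathcal{B}(n)$, which is precisely the claimed bound.

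The one point requiring a little care is that $\mathcal{B}(n)$ as defined is a minimum over a (potentially infinite) set $\mathcal{Z}_i$ of lattice points, so one should check the minimum is actually attained and is positive; but since $F_i$ takes positive integer values on $\mathcal{Z}_i$, the infimum is a well-defined positive integer realized by some point, so this is not a genuine obstacle — it merely means $\mathcal{B}(n)$ is well defined. I expect the only real work to be the index-shifting identity linking \eqref{eq:GS} to $G_1$ and $G_2$; everything else is a direct translation of "$c_1c_{n-1}[M]=0$ and $M^{S^1}\neq\emptyset$" into membership in $\mathcal{Z}_i$ together with the evaluation $|M^{S^1}|=F_i$. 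Note that this theorem only reduces the problem to a combinatorial minimization; the actual values of $\mathcal{B}(n)$ displayed in Theorem~\ref{thm:MAIN} are then extracted from $\mathcal{Z}_i$ in the subsequent Theorems~\ref{thm:B} and \ref{thm:C} by invoking the Fermat-type results of Section~\ref{sec:prent}.
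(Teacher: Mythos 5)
Your proposal follows essentially the same route as the paper's proof: fold the identity of Theorem~\ref{GS} using the symmetry $N_i=N_{n-i}$ to obtain the constraint $G_1=0$ (for $n=2m$) or $G_2=0$ (for $n=2m+1$), observe that $F_1$, resp.\ $F_2$, is the folded count $\sum_{i=0}^n N_i=\lvert M^{S^1}\rvert>0$, and conclude that $(N_0,\dots,N_m)\in\mathcal{Z}_i$ so that $\lvert M^{S^1}\rvert\geq\min_{\mathcal{Z}_i}F_i=\mathcal{B}(n)$. The one small caveat is that the symmetry $N_i=N_{n-i}$ does not quite follow from reversing the circle alone (that argument only shows the reversed action has the reversed sequence of counts, not that the sequence is palindromic); the paper instead cites Hattori's Proposition~2.11, which ultimately rests on $\sum_i(-y)^iN_i$ being the $\chi_y$-genus of $M$ and hence independent of the choice of action.
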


\begin{proof}
By Proposition 2.11 in \cite{Ha84}, we know that $N_i=N_{n-i}$ for every $i \in \Z$.  Thus, as the
 total number of fixed points is $$\sum_{k=0}^{n}N_k,$$ 
it follows that 
$F_1(N_0,\ldots,N_m)$ and $F_2(N_0,\ldots,N_m)$ count the total number of fixed points 
when $n=2m$ and $n=2m+1$ respectively, and, since the fixed point set is nonempty, 
we must have $F_1>0$ and $F_2>0$.

Since we are assuming that $c_1c_{n-1}[M]=0$, the constraints $G_1=0$ and $G_2=0$
are obtained from  Theorem~\ref{GS}, according to whether $n$ is odd or even. Indeed, let $g:\mathbb{Z}\times \mathbb{Z} \to \mathbb{Z}$ be the map
$$
g(i,n)=6i(i-1)+\displaystyle\frac{5n-3n^2}{2}.
$$ 
If $n=2m$, since $N_i=N_{n-i}$, we have
\begin{equation}\label{G1=0}
\begin{aligned}
0= \sum_{i=0}^nN_i\, g(i,n) & =   -mN_m+\sum_{k=1}^{m}  \Big(g(m-k,2m)+g(m+k,2m)\Big) \, N_{m-k} \\
 & =-mN_m+2\sum_{k=1}^{m} (6k^2-m) \, N_{m-k} \\
 &=G_1(N_0,\ldots,N_m). \nonumber
\end{aligned}
\end{equation}
Analogously, if $n=2m+1$, we have
\begin{equation}\label{G2=0}
\begin{aligned}
0= \sum_{i=0}^nN_i\, g(i,n) & =  \sum_{k=0}^{m}N_{m-k} \Big( g(m-k,2m+1)+g(m+k+1,2m+1) \Big) \\
& =2\sum_{k=0}^{m}\Big(6k(k+1)-m+1\Big) \,N_{m-k} \\
&=2\, G_2(N_0,\ldots,N_m). \nonumber
\end{aligned}
\end{equation}
\end{proof}

\section{A lower bound when $n$ is even}
Here we compute the minimal value $\mathcal{B}(n)$ of the function $F_1$ restricted to $\mathcal{Z}_1$,  obtaining a lower bound for the number of fixed points of the $S^1$-action when $n$ is \emph{even}.

\begin{theoremL}\label{thm:B} Let $n=2m$ be an even positive integer and let $\mathcal{B}(n)$ be the minimum of the function $F_1$ restricted to $\mathcal{Z}_1$, where $F_1$ and $\mathcal{Z}_1$ are respectively defined by \eqref{f1} and \eqref{z}. Then $\mathcal{B}(n)$ can take all values in the set $\{2,3,4,6,7,8,9,12\}$. In particular, if $r:=\gcd{\left(\frac{n}{2},12\right)}$ ($=\gcd{(m,12)}$), we have that:

\begin{enumerate}
\itemsep2pt \parskip0pt \parsep0pt

\item[(i)] if $r=1$ then $\mathcal{B}(n)=12$;
\item[(ii)] if $r=2$ then

\begin{tabular}{ll}
$  \bullet \,  \mathcal{B}(n)=6$  & if $n \not\equiv 28 \pmod{32}$, \\
 $\bullet \,  \mathcal{B}(n)=12$ & otherwise;
 \end{tabular}

\item[(iii)] if $r=3$ then 

\begin{tabular}{ll}
$\bullet \,  \mathcal{B}(n)= 4$ & if all  prime factors of $\frac{n}{6}$ congruent to $3\pmod 4$ \\ & occur  with even exponent, \\
$\bullet \,  \mathcal{B}(n)= 8$ & otherwise;
 \end{tabular}

\item[(iv)] if $r=4$ then 

\begin{tabular}{ll}
 $\bullet \, \mathcal{B}(n)= 3$ & if $\frac{n}{2}$ is a square, \\
 $\bullet \,  \mathcal{B}(n)= 6$  & if $\frac{n}{2}$ is not a square and $n\neq 4^k(8t+7)\, \forall k,t\in  \mathbb{Z}_{\geq0},$ \\
$\bullet \,  \mathcal{B}(n) = 9$ & otherwise;
 \end{tabular} 


\item[(v)] if $r=6$ then 

\begin{tabular}{ll}
$\bullet \,  \mathcal{B}(n)= 2$ & if $\frac{n}{12}$ is a square, \\
$\bullet \,  \mathcal{B}(n)=4$ & if $\frac{n}{12}$ is not a square and all prime factors of $\frac{n}{6}$ \\ & congruent to   $3\pmod 4$ occur with even exponent, \\
$\bullet \, \mathcal{B}(n)= 6$ & if $\frac{n}{12}$ is not a square,  at least one prime factor of $\frac{n}{6}$ \\ & congruent  to $3\pmod 4$ occurs with an odd exponent  \\  & and $n \not\equiv 28 \pmod{32}$, \\
$\bullet \,  \mathcal{B}(n)= 8$ & otherwise; 
 \end{tabular} 


\item[(vi)] if $r=12$ then 

\begin{tabular}{ll}
$\bullet \, \mathcal{B}(n)= 2$ & if $\frac{n}{12}$  is a square, \\
$\bullet \, \mathcal{B}(n)=3$ &  if $\frac{n}{2}$ is a square, \\
$\bullet \, \mathcal{B}(n)= 4$  & if none of the above holds and all prime factors  of $\frac{n}{6}$ \\ & congruent to $3\pmod 4$ occur with even exponent, \\
$\bullet \,\mathcal{B}(n)=6$  & if none of the above holds  and $n\neq  4^k(8t+7)\, \forall k,t\!\in \!\mathbb{Z}_{\geq0}$,\\
$\bullet \, \mathcal{B}(n)= 7$ & otherwise.
\end{tabular}

\end{enumerate}

\end{theoremL}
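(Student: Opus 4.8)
The plan is to reduce the minimization of $F_1$ over $\mathcal{Z}_1$ to a question about representing the integer $m$ (or a related quantity) as a sum of squares and then invoke the classical theorems of Section~\ref{sec:prent}. Recall that a point of $\mathcal{Z}_1$ is a tuple $(N_0,\dots,N_m)\in(\Z_{\geq0})^{m+1}$ with $F_1>0$ and
\[
G_1(N_0,\dots,N_m)=-mN_m+2\sum_{k=1}^{m}(6k^2-m)N_{m-k}=0.
\]
Writing $a_k:=N_{m-k}$ for $k=0,\dots,m$, the constraint $G_1=0$ reads $m\bigl(a_0+2\sum_{k=1}^m a_k\bigr)=12\sum_{k=1}^m k^2 a_k$, i.e. $m\cdot F_1=12\sum_{k=1}^m k^2 a_k$. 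So the total number of fixed points $F_1$ is always a multiple of $12/r$ where $r=\gcd(m,12)$; this immediately yields the divisibility in Theorem~\ref{thm:D} for even $n$, and it says $\mathcal{B}(n)\geq 12/r$ always. First I would record this, and also record the trivial upper bound: one can always realize any value of $F_1$ that is achievable, so the problem is genuinely to decide the smallest achievable multiple of $12/r$.

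Next I would translate ``achievable'' into a sum-of-squares statement. Suppose we want $F_1=N$ with $N$ a multiple of $12/r$. Then we need nonnegative integers $a_0,\dots,a_m$ with $a_0+2\sum_{k\geq1}a_k=N$ and $\sum_{k\geq1}k^2 a_k=\frac{mN}{12}$. The variable $a_0$ is a free slack variable absorbing the difference $N-2\sum_{k\geq1}a_k$, so the real constraint is: write $\frac{mN}{12}$ as a sum of $j$ squares $k_1^2+\dots+k_j^2$ (with repetitions, corresponding to the multiplicities $a_k$) where $2j\leq N$, i.e. using at most $\lfloor N/2\rfloor$ squares. For each candidate small value of $N$ (which by the divisibility is one of finitely many multiples of $12/r$, and in fact $N\leq 12$ suffices to check since $a_k$ can always be taken with $k=1$), this becomes: is $\frac{mN}{12}$ a sum of at most $\lfloor N/2\rfloor$ positive squares? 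For $N$ minimal this is $1,2,3,$ or $4$ squares, and here Euler's Theorem~\ref{thm:Euler} (two squares), Legendre's Theorem~\ref{thm:Legendre} (three squares), and Lagrange's Theorem~\ref{thm:Lagrange} (four squares) give exactly the case distinctions in terms of whether $\frac{n}{12}$ or $\frac{n}{2}$ is a square, whether prime factors of $\frac{n}{6}\equiv 3\pmod 4$ have even exponent, and whether $n=4^k(8t+7)$. The case $N=6$ needs $\frac{m}{2}=\frac{n}{4}$ as a sum of at most $3$ positive squares, which introduces the extra congruence $n\not\equiv28\pmod{32}$ (this is where one must be careful: ``at most $3$ positive squares'' differs from ``at most $3$ squares'' by the possibility that $mN/12$ is itself a square or twice a square, so one expands Legendre slightly). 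I would organize the proof as: for each divisor $r\in\{1,2,3,4,6,12\}$ of $12$, list the multiples of $12/r$ up to $12$, and for each decide realizability via the appropriate polygonal-number theorem, producing the tables (i)--(vi).

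The main obstacle I anticipate is the bookkeeping around ``positive'' versus ``nonnegative'' squares and the slack variable $a_0$: one must verify both that a given sum-of-squares decomposition of $\frac{mN}{12}$ yields a valid tuple (i.e. $2\sum_{k\geq1}a_k\leq N$, so $a_0\geq0$) and, conversely, that no smaller $N$ works — the latter requires ruling out decompositions using fewer squares, which is exactly where the ``occurs with even exponent'' / ``$4^k(8t+7)$'' obstructions enter and must be matched precisely against the hypotheses on $n=2m$. A secondary subtlety is checking the small-$m$ exceptional cases ($m$ sharing few factors with $12$) separately, since there the number of available multiplicities $k\in\{1,\dots,m\}$ is limited and the generic argument ``just take $k=1$'' may not immediately give the slack bound; but since the claim allows $\mathcal{B}(n)\in\{2,3,4,6,7,8,9,12\}$ this is a finite check. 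Everything else is routine arithmetic with $g(i,n)$ already carried out in the proof of Theorem~\ref{ab}.
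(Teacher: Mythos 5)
Your proposal is correct and follows essentially the same route as the paper: eliminating $N_m$ via $G_1=0$ to get $m\cdot F_1=12\sum_{k\geq1}k^2N_{m-k}$, deducing $F_1\equiv 0\pmod{12/r}$, translating the slack condition $N_m\geq0$ into ``$l\cdot\frac{m}{r}$ is a sum of at most $\frac{6l}{r}$ squares,'' and then running the case analysis over $r$ with Lagrange, Legendre and Euler (your worry about ``positive'' versus ``nonnegative'' squares is vacuous, since a sum of at most $j$ positive squares is the same as a sum of $j$ squares allowing zeros, and the small-$m$ cases are disposed of exactly as you suggest, by noting that for $m\leq 6$ the slack constraint is automatic).
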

\begin{proof}
In $\mathcal{Z}_1$ we have 
$$
G_1= -mN_m + 2\sum_{k=1}^{m} (6k^2-m)N_{m-k} = 0,
$$
and so, in this set,
\begin{equation}\label{eq:B1}
N_m = 2 \sum_{k=1}^{m} \left(\frac{6k^2}{m}-1\right)N_{m-k} \in \mathbb{Z}_{\geq 0}.
\end{equation}
Hence, to find $\min_{\mathcal{Z}_1} F_1$, we start by substituting \eqref{eq:B1} in \eqref{f1},
obtaining
\begin{equation}\label{eq:B2}
F_1=\frac{12}{m}\sum_{k=1}^{m}k^2 N_{m-k}.
\end{equation}
Since $F_1$ is integer valued  on $\mathbb{Z}^{n+1}$, we need 
$$
\frac{12}{m}\sum_{k=1}^{m}k^2 N_{m-k} \in \mathbb{Z}.
$$
As $N_{0},\ldots, N_{m-1}\in \mathbb{Z}$, this is equivalent to requiring
$$
\sum_{k=1}^{m}k^2 N_{m-k} \equiv 0 \pmod{\frac{m}{r}},
$$
with $r:=\gcd{(m,12)}=\gcd{( \frac{n}{2} ,12)}\in \{1,2,3,4,6,12\}$. Note that this also implies that the expression on the right hand side of \eqref{eq:B1} is an integer and that
\begin{equation}\label{eq:pmod}
F_1\equiv 0 \pmod{\frac{12}{r}}.
\end{equation}

We then want to find the smallest  positive value of 
$$
\sum_{k=1}^{m}k^2 N_{m-k}
$$
which is a multiple of $\frac{m}{r}$ and such that
\begin{equation}\label{eq:B3}
 \sum_{k=1}^{m} \left(\frac{6k^2}{m}-1\right)N_{m-k}\geq 0,
\end{equation}
so that \eqref{eq:B1} is satisfied.  Then, by \eqref{eq:B2}, the  minimum  $\mathcal{B}(n)$ of $F_1$ on $\mathcal{Z}_1$ is obtained by multiplying this value by $\frac{12}{m}$.
\begin{remark}
Note that, when $m\leq 6$, condition \eqref{eq:B3} is always satisfied. Hence, the smallest multiple of $\frac{m}{r}$ that satisfies all the required conditions is $\frac{m}{r}$ itself (taking for instance $N_{m-1}=\frac{m}{r}$, $N_m=\frac{2(6-m)}{r}$ and all other $N_i$s equal to $0$), leading to 
$$
\mathcal{B}(n) = \frac{m}{r}\cdot \frac{12}{m}=\frac{12}{r},\quad \text{whenever} \quad n=2m \quad \text{with}\quad m\leq 6.
$$
\end{remark}
In general, we see that \eqref{eq:B3} is equivalent to
$$
\sum_{k=1}^{m} k^2 N_{m-k} \geq \frac{m}{6}\sum_{k=1}^{m} N_{m-k},
$$
so our goal is to find the smallest positive multiple of $\frac{m}{r}$ which can be written as
$$
\sum_{k=1}^{m} k^2 N_{m-k}
$$
and is greater or equal to 
$$
\frac{m}{6}\sum_{k=1}^{m} N_{m-k}.
$$
In other words, for each $m$, we want to find the smallest value of $l\in \mathbb{Z}_{>0}$ such that
\begin{equation}\label{eq:B4}
l \cdot \frac{m}{r} = \sum_{k=1}^{m} k^2 N_{m-k} \geq \frac{m}{6}\sum_{k=1}^{m} N_{m-k}.
\end{equation}
Note that the first sum in \eqref{eq:B4} is a sum of squares, possibly with repetitions (whenever one of the $N_{m-k}$s is greater than $1$), and that the sum on the right hand side of \eqref{eq:B4} is precisely the number of squares used in this representation of $l \cdot \frac{m}{r}$ as a sum of squares. 
We then want to find the  smallest value of $l\in \mathbb{Z}_{>0}$ such that 
\begin{equation}\label{eq:B5}
\sum_{k=1}^{m} N_{m-k} \leq  \frac{6 l}{r},
\end{equation}
where $\sum_{k=1}^{m} N_{m-k}$ is the smallest number of squares that is needed to represent the positive integer $l \cdot \frac{m}{r}$ as a sum of squares. We can then use the results in Section~\ref{sec:prent}.

When $r=1$ condition \eqref{eq:B5} becomes 
\begin{equation}\label{eq:B6}
\sum_{k=1}^{m} N_{m-k} \leq  6 l.
\end{equation}
Since, by Theorem~\ref{thm:Lagrange}, we know that every positive integer can be written as a sum of $4$ or fewer squares,  \eqref{eq:B6} can be achieved with $l=1$, since $\frac{m}{r}=m$ can be written as a sum of $4$ or fewer squares and then
$$
\sum_{k=1}^{m} N_{m-k} \leq 4 \leq  6 l = 6.
$$
We conclude that, when $r=1$, we always have $\mathcal{B}(n)=  \frac{12}{m}\cdot \frac{m}{r}=12$.

When $r=2$, condition  \eqref{eq:B5} becomes 
\begin{equation}\label{eq:B7}
\sum_{k=1}^{m} N_{m-k} \leq  3 l.
\end{equation}
Hence, if $\frac{m}{r}=\frac{m}{2}$ can be written as a sum of $3$ or fewer squares,  \eqref{eq:B7} can be achieved with $l=1$. Otherwise we need $l=2$, since then, by Theorem~\ref{thm:Lagrange}, the number $\frac{2 m}{r}=m$ can be written as a sum of $4$ or fewer squares and then
$$
\sum_{k=1}^{m} N_{m-k} \leq 4 \leq  3 l = 6.
$$
Note that, since $r=2$, the number $\frac{m}{2}$ cannot be a multiple of $4$ and so the condition
$$
\frac{m}{2} \neq 4^k(8t+7)\quad \text{for all} \quad k,t\in \mathbb{Z}_{\geq0}
$$
in Theorem~\ref{thm:Legendre} is, in this situation, equivalent to 
$$
\frac{m}{2} \neq 8t+7 \quad \text{for all} \quad t\in \mathbb{Z}_{\geq0}
$$
which, in turn, is equivalent to $m\not\equiv 14 \pmod{16}$ (i.e.\ $n\not\equiv 28 \pmod{32}$). 
Hence, by Theorem~\ref{thm:Legendre}, we conclude that,  when $r=2$, we have
$\mathcal{B}(n)=  \frac{12}{m}\cdot \frac{m}{2}=6$ if  $n\not\equiv 28 \pmod{32}$ and $\mathcal{B}(n)=   \frac{12}{m}\cdot \frac{2 m}{2} = 12$ otherwise.

When $r=3$, condition  \eqref{eq:B5} becomes 
\begin{equation}\label{eq:B8}
\sum_{k=1}^{m} N_{m-k} \leq  2 l.
\end{equation}
Hence, if $\frac{m}{r}$ is a square or a  sum of $2$  squares,  \eqref{eq:B8} can be achieved with $l=1$. Otherwise we need $l=2$, since then, by Theorem~\ref{thm:Lagrange}, the number $\frac{2 m}{r}=\frac{2 m}{3}$ can be written as a sum of $4$ or fewer squares.
Hence, by Theorem~\ref{thm:Euler}, we conclude that,  when $r=3$, $\mathcal{B}(n)=  \frac{12}{m}\cdot \frac{m}{3}=4$ if all prime factors of $ \frac{m}{3}$ congruent to $3 \pmod 4$ occur with even exponent and $\mathcal{B}(n)=  \frac{12}{m}\cdot \frac{2 m}{3} = 8$ otherwise.

When $r=4$, condition  \eqref{eq:B5} becomes 
\begin{equation}\label{eq:B9}
\sum_{k=1}^{m} N_{m-k} \leq  \frac{3 l}{2}.
\end{equation}
Hence, if $\frac{m}{r}=\frac{m}{4}$ is a square (or, equivalently, if $m$ is a square),  \eqref{eq:B9} can be achieved with $l=1$. Otherwise, if $\frac{2m}{r}=\frac{m}{2}$ can be written as a sum of $3$ or fewer squares,  \eqref{eq:B9} can be achieved with $l=2$. 
Otherwise, we need $l=3$ since then,  by Theorem~\ref{thm:Legendre}, the number $\frac{3 m}{3}=\frac{3m}{4}$ can be written as a sum of $4$ or fewer squares.

Hence, by Theorem~\ref{thm:Legendre}, we conclude that,  when $r=4$, we have $\mathcal{B}(n)= \frac{12}{m}\cdot \frac{m}{4}=3$ if  $m$ is a square, $\mathcal{B}(n)=\frac{12}{m}\cdot \frac{2 m}{4} = 6$ if $m$ is not a square and $\frac{m}{2}\neq  4^k(8t+7)$ for all $k,t\in \mathbb{Z}_{\geq0}$ (which, since $n$ is even,  is equivalent to $n \neq  4^k(8t+7)$ for all $k,t\in \mathbb{Z}_{\geq0}$), and $\mathcal{B}(n)= \frac{12}{m}\cdot \frac{3 m}{4} = 9$ in all other cases.

When $r=6$, condition  \eqref{eq:B5} becomes 
\begin{equation}\label{eq:B10}
\sum_{k=1}^{m} N_{m-k} \leq  l .
\end{equation}
Hence, if $\frac{m}{r}=\frac{m}{6}$ is a square, then \eqref{eq:B10} can be achieved with $l=1$. Otherwise, if $\frac{2m}{r}=\frac{m}{3}$ is a  square or a sum of $2$  squares,   \eqref{eq:B10} can be achieved with $l=2$.
If this is not the case and   $\frac{3m}{r}=\frac{m}{2}$ is a sum of $3$ or fewer squares, then  \eqref{eq:B10} can be achieved with  $l=3$. If this also does not hold then   \eqref{eq:B10} can only be achieved with $l=4$ since then, by Theorem~\ref{thm:Lagrange} the number $\frac{4m}{r}=\frac{2m}{3}$ can be written as a sum of $4$ or fewer squares.

Note that, since $r=6$, the number $\frac{m}{2}$ cannot be a multiple of $4$. Hence, condition 
$$
\frac{m}{2}\neq  4^k(8t+7) \quad \text{for all} \quad k,t\in \mathbb{Z}_{\geq0}
$$
in Theorem~\ref{thm:Legendre} is, in this situation, equivalent to 
$$
\frac{m}{2}\neq 8t+7 \quad \text{for all} \quad t\in \mathbb{Z}_{\geq0}
$$
which, in turn, is equivalent to $n\neq 28 \pmod{32}$.
Hence, by Theorems~\ref{thm:Legendre} and \ref{thm:Euler}, we conclude that,  when $r=6$, we have
$\mathcal{B}(n)= \frac{12}{m}\cdot \frac{m}{6}=2$ if   $ \frac{m}{6}=\frac{n}{12}$ is a square; otherwise $\mathcal{B}(n)=\frac{12}{m}\cdot \frac{2 m}{6} = 4$ if all prime factors of $\frac{m}{3}=\frac{n}{6}$ congruent to $3\pmod 4$ occur with even exponent; if none of these holds then $\mathcal{B}(n)= \frac{12}{m}\cdot \frac{3 m}{6} = 6 $ if $n\neq 28 \pmod{32}$ and $\mathcal{B}(n)= \frac{12}{m}\cdot \frac{4 m}{6} = 8$ otherwise.

When $r=12$, condition  \eqref{eq:B5} becomes 
\begin{equation}\label{eq:B11}
\sum_{k=1}^{m} N_{m-k} \leq  \frac{l}{2} .
\end{equation}
Hence, even if $\frac{m}{r}$ were a square, condition  \eqref{eq:B11} could never be achieved with $l=1$. If $\frac{2m}{r}=\frac{m}{6}=\frac{n}{12}$ is a  square,   \eqref{eq:B11} can be achieved with $l=2$.
If this is not the case and   $\frac{3m}{r}=\frac{m}{4}$ is a square (or, equivalently, if $m$ is a square), then  \eqref{eq:B11} can be achieved with  $l=3$. (Note that if $\frac{m}{4}$ is a square then $\frac{m}{6}$ is not a square.) If this also does not hold and $\frac{4m}{r}=\frac{m}{3}$ is a square or a sum of two squares, then  \eqref{eq:B11} can be achieved with $l=4$.
In none of the above holds and $\frac{5m}{r}=\frac{5m}{12}$ is a square or a sum of two squares then   \eqref{eq:B11} could be achieved with $l=5$. Note, however, that if $\frac{m}{3}$ is not a square nor a sum of two squares then, by Theorem~\ref{thm:Euler}, at least one prime factor of $\frac{m}{3}$ is congruent to $3 \pmod{4}$ and  occurs with  odd exponent. Then, since $5\neq 3 \pmod 4$, the number $\frac{5m}{12}$ also has this prime factor occurring  with the same odd exponent and so, in this situation, $\frac{5m}{12}$ cannot be written as a sum of $2$ or fewer squares, implying that this case is impossible. 

If none of the above conditions are true but  $\frac{6m}{r}=\frac{m}{2}=\frac{n}{4}$ is a sum of $3$ or fewer squares, then  \eqref{eq:B11} can be achieved with $l=6$.
If still $\frac{6m}{r}=\frac{m}{2}$ cannot be written as  a sum of $3$ or fewer squares then $\frac{7m}{r}=\frac{7m}{12}$ can, and so condition  \eqref{eq:B11} can be achieved with $l=7$. Indeed, if $\frac{m}{2}$ cannot be written as  a sum of $3$ or fewer squares, then
$$
\frac{m}{2} = 4^k(8t+7) \quad \text{for some} \quad k,t\in \mathbb{Z}_{\geq0},
$$ 
and $k\geq 1$ (since $m$ is  multiple of $4$);  then
$$
\frac{7m}{12} = \frac{14}{3} \cdot 4^{k-1} (8t+7),
$$
and so $8t+7 = 0\pmod 3$, implying that $t=1 \pmod 3$. Hence,
$$
\frac{7m}{12} =  \frac{14}{3} \cdot 4^{k-1} (24\,t^\prime+15) = 14 \cdot  4^{k-1} (8t^\prime+5) =  4^{k-1} (8t^{\prime\prime}+70) =  4^{k-1} (8t^{\prime\prime\prime}+6)
$$
for some $t^\prime, t^{\prime\prime}, t^{\prime\prime\prime}\in \Z_{\geq0}$ and so, by Theorem~\ref{thm:Legendre}, the number $\frac{7m}{12}$ can be represented by a sum of $3$ or fewer squares.

We conclude,  by Theorems~\ref{thm:Legendre} and \ref{thm:Euler}  that,  when $r=12$, we have
$\mathcal{B}(n)= \frac{12}{m}\cdot \frac{2 m}{12}=2$ if  $\frac{m}{6}=\frac{n}{12}$ is a square, $\mathcal{B}(n)= \frac{12}{m}\cdot \frac{3 m}{12} = 3$ if   $m=\frac{n}{2}$ is a square, and $\mathcal{B}(n)=\frac{12}{m}\cdot \frac{4 m}{12} = 4$ if neither $\frac{n}{12}$ nor $\frac{n}{2}$ are squares and all prime factors  of $\frac{n}{6}$  congruent to $3 \pmod 4$ occur with even exponent. If none of these conditions hold then $\mathcal{B}(n)= \frac{12}{m}\cdot \frac{6 m}{12} = 6$, if  $\frac{m}{2} \neq  4^k(8t+7)$ (or, equivalently, $n  \neq  4^k(8t+7)$) for any $k,t\in \mathbb{Z}_{\geq0}$, and $\mathcal{B}(n)= 7$ otherwise.

\end{proof}

\begin{remark}
In the Appendix we provide examples that show that all the cases listed in Theorem~\ref{thm:B} are possible.
\end{remark}

\section{Lower bound when $n$ is odd}
Here we compute the minimal value $\mathcal{B}(n)$ of the function $F_2$ restricted to $\mathcal{Z}_2$,  obtaining a lower bound for the number of fixed points of the $S^1$-action when $n$ is \emph{odd}.

\begin{theoremL}\label{thm:C}
Let $n=2m+1$ be an odd positive integer and let $\mathcal{B}(n)$ be the minimum of the function $F_2$ restricted to the set $\mathcal{Z}_2$, where $F_2$ and $\mathcal{Z}_2$ are respectively defined by \eqref{f2} and \eqref{z}. Then $\mathcal{B}(n)$ can take all values in the set $\{2,4,6,8,12,24\}$. In particular, if $r=\gcd{(\lfloor \frac{n}{2} \rfloor -1 , 12)}$ ($= \gcd{(m-1,12)}$), we have:
\begin{enumerate}
\itemsep2pt \parskip0pt \parsep0pt
\item[(i)] if $r\leq 4$ then $\mathcal{B}(n)=\frac{24}{r}$;
\item[(ii)] if $r=6$ then 

\begin{tabular}{ll}
$  \bullet \, \mathcal{B}(n)= 4$ & if  every prime factor of $\frac{n}{3}$ congruent to $3\pmod 4$ \\ & occurs with even exponent,  \\
 $\bullet \, \mathcal{B}(n)= 8$ & otherwise; 
\end{tabular}

\item[(iii)] if $r=12$ then 

\begin{tabular}{ll}
$  \bullet \,\mathcal{B}(n)= 2$ & if $\frac{n-3}{24}$  is a triangular number, \\
$  \bullet \, \mathcal{B}(n)=4$ & if  $\frac{n-3}{24}$ is not a triangular number and every prime \\ & factor of $\frac{n}{3}$   congruent to $3\pmod 4$ occurs with  \\ &  even exponent,  \\
$  \bullet \, \mathcal{B}(n)= 6$  & otherwise.
\end{tabular}
\end{enumerate}
\end{theoremL}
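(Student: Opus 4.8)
The plan is to follow the same strategy as in the proof of Theorem~\ref{thm:B}, with triangular numbers playing the role that squares played there. Writing $T_k:=\frac{k(k+1)}{2}$ for the $k$-th triangular number, the identity $6k(k+1)=12\,T_k$ shows that on $\mathcal{Z}_2$ the relation $G_2(N_0,\dots,N_m)=0$ is equivalent to
\begin{equation}\label{eq:Codd}
(m-1)\sum_{k=0}^{m}N_{m-k}=12\sum_{k=0}^{m}T_k\,N_{m-k}.
\end{equation}
Set $S:=\sum_{k=0}^{m}N_{m-k}=\sum_{k=0}^{m}N_k$, so that $F_2=2S$. Reading \eqref{eq:Codd} as $\sum_{k}T_k N_{m-k}=\frac{(m-1)S}{12}$ and noting that the left-hand side is a sum of $S$ triangular numbers ($N_{m-k}$ copies of $T_k$, with $T_0=0$ permitted), I would recast the problem as: \emph{find the smallest $S\geq1$ such that $12\mid(m-1)S$ and $\frac{(m-1)S}{12}$ is a sum of $S$ triangular numbers $T_{k_1}+\dots+T_{k_S}$ with all $k_i\leq m$}; then $\mathcal{B}(n)=2S$. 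The reverse implication is immediate: from such a representation one gets a point of $\mathcal{Z}_2$ realizing $F_2=2S$ by letting $N_{m-k}$ be the number of indices $i$ with $k_i=k$.

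Next I would perform the divisibility reduction. Put $r:=\gcd(m-1,12)$ and $d':=\frac{m-1}{r}$. Then $12\mid(m-1)S$ is equivalent to $\frac{12}{r}\mid S$, so write $S=l\cdot\frac{12}{r}$ with $l\in\Z_{>0}$; now $\frac{(m-1)S}{12}=d'l$, and the task becomes to find the smallest $l$ for which $d'l$ is a sum of $\frac{12l}{r}$ triangular numbers of index at most $m$. In every case that will occur one has $l/r\leq1$, hence $d'l\leq m-1<T_m$, so the constraint $k_i\leq m$ is automatic and may be dropped. I would also record the arithmetic identities used to match the statement: when $r\in\{6,12\}$ one has $3\mid m-1$, hence $3\mid n$, and $4d'+1=8d'+1=\frac{n}{3}$, while $d'=\frac{n-3}{24}$ when $r=12$.

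Then the theorems of Section~\ref{sec:prent} finish the argument. By Gauss's theorem (Theorem~\ref{thm:Gauss}) every non-negative integer is a sum of at most three triangular numbers, so $l=1$ already works whenever $\frac{12}{r}\geq3$, i.e.\ whenever $r\leq4$; this gives $S=\frac{12}{r}$ and $\mathcal{B}(n)=\frac{24}{r}$, which is case (i). If $r=6$ then $\frac{12}{r}=2$: the value $l=1$ is admissible iff $d'$ is a sum of at most two triangular numbers, which by Ewell's theorem (Theorem~\ref{thm:Ewell}) holds iff every prime factor of $4d'+1=\frac{n}{3}$ congruent to $3\pmod4$ occurs with even exponent; otherwise $l=2$ works unconditionally since then $\frac{12l}{r}=4\geq3$. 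This yields $\mathcal{B}(n)=4$ or $8$, i.e.\ case (ii). If $r=12$ then $\frac{12}{r}=1$: $l=1$ is admissible iff $d'=\frac{n-3}{24}$ is itself a triangular number; failing that, $l=2$ is admissible iff $2d'$ is a sum of at most two triangular numbers, i.e.\ (Ewell again) iff every prime factor of $8d'+1=\frac{n}{3}$ congruent to $3\pmod4$ occurs with even exponent; and failing that too, $l=3$ works unconditionally. This gives $\mathcal{B}(n)\in\{2,4,6\}$, i.e.\ case (iii). Collecting the values $2l\cdot\frac{12}{r}$ produced shows that $\mathcal{B}(n)$ ranges exactly over $\{2,4,6,8,12,24\}$, and exhibiting, for each value, an $m$ with the prescribed $\gcd$ and the prescribed status of $\frac{n}{3}$ or $\frac{n-3}{24}$ shows every value is attained.

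The individual computations are routine; the point requiring care is the minimality bookkeeping. For each $r$ one must check that no $S$ smaller than the one produced satisfies both the divisibility constraint and the triangular-representation constraint -- equivalently, that when the relevant Ewell condition fails there genuinely is no cheaper representation -- and that the side condition $k_i\leq m$ is vacuous in every case invoked. The one conceptual step, after which everything is mechanical, is the observation $6k(k+1)=12\,T_k$, which is what makes Gauss's and Ewell's theorems on triangular numbers the exact analogues of the square-number results used in Theorem~\ref{thm:B}.
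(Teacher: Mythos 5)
Your proof is correct and is essentially the argument the paper gives: both reduce the minimization over $\mathcal{Z}_2$ to finding the least $l$ for which $l\cdot\frac{m-1}{r}$ is a sum of at most $\frac{12l}{r}$ triangular numbers, and then invoke Gauss's and Ewell's theorems in exactly the way you describe. Your reformulation $(m-1)S=12\sum_k T_k N_{m-k}$ even handles the $m=1$ case and the side condition $k_i\leq m$ slightly more cleanly than the paper, which treats $m=1$ separately and leaves the index bound implicit, but the substance is identical.
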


\begin{proof}
In $\mathcal{Z}_2$ we have 
$$
G_2= (1-m) N_m + \sum_{k=1}^{m} \Big(6k(k+1)-(m-1)\Big)N_{m-k} = 0.
$$

If $m=1$ then $G_2=12N_0=0$ implies that $N_0=0$ and so the minimum  of $F_2=2N_1$ on $\mathcal{Z}_2$ is $\mathcal{B}(3)=2$ (attained with $N_0=0$ and $N_2=1$). Note that here $\frac{n-3}{24}=0$ is a triangular number and we assume $r=\gcd{(m-1,12)}=\gcd{(0,12)}=12$.

If $m\neq 1$ then on  $\mathcal{Z}_2$ we have 
\begin{equation}\label{eq:C1}
N_m =  \sum_{k=1}^{m} \left(\frac{6k(k+1)}{m-1}-1\right)N_{m-k} \in \mathbb{Z}_{\geq 0}.
\end{equation}
Hence, to find $\min_{\mathcal{Z}_2} F_2$, we start by substituting \eqref{eq:C1} in \eqref{f2},
obtaining
\begin{equation}\label{eq:C2}
F_2=\frac{24}{m-1}\sum_{k=1}^{m}\frac{k(k+1)}{2} N_{m-k}.
\end{equation}
Since $F_2$ is integer valued and $N_m\in \mathbb{Z}$, we need 
$$
\frac{12}{m-1}\sum_{k=1}^{m}\frac{k(k+1)}{2} N_{m-k} \in \mathbb{Z}.
$$
Since $N_{0},\ldots, N_{m-1}\in \mathbb{Z}$, this is equivalently to requiring
$$
\sum_{k=1}^{m-1}\frac{k(k+1)}{2} N_{m-k}  \equiv 0 \pmod{\frac{m-1}{r}},
$$
with $r:=\gcd{(m-1,12)}=\gcd{( \lfloor \frac{n}{2}\rfloor -1,12)}\in \{1,2,3,4,6,12\}$.
Note that, by \eqref{eq:C2} this also implies that
\begin{equation}\label{eq:divodd}
F_2 \equiv 0 \pmod{\frac{24}{r}}.
\end{equation}

We then want to find the smallest  positive value of 
$$
\sum_{k=1}^{m}\frac{k(k+1)}{2} N_{m-k}
$$
which is a multiple of $\frac{m-1}{r}$ and such that
\begin{equation}\label{eq:C3}
 \sum_{k=1}^{m} \left(\frac{6k(k+1)}{m-1}-1\right)N_{m-k}\geq 0,
\end{equation}
so that \eqref{eq:C1} is satisfied. Then, by \eqref{eq:C2}, the  minimum  $\mathcal{B}(n)$ of $F_2$ on $\mathcal{Z}_2$ is obtained by multiplying this value by $\frac{24}{m-1}$.
\begin{remark}
Note that, when $m\leq 13$, condition \eqref{eq:C3} is always satisfied. Hence, the smallest multiple of $\frac{m-1}{r}$ that satisfies all the required conditions is $\frac{m-1}{r}$ itself, leading to 
$$
\mathcal{B}(n) = \frac{m-1}{r}\cdot \frac{24}{m-1}=\frac{24}{r},\quad \text{whenever} \quad n=2m+1 \quad \text{with}\quad m\leq 13.
$$
\end{remark}
In general, we see that \eqref{eq:C3} is equivalent to
$$
\sum_{k=1}^{m} \frac{k(k+1)}{2} N_{m-k} \geq \frac{m-1}{12}\sum_{k=1}^{m} N_{m-k},
$$
so our goal is to find the smallest positive multiple of $\frac{m-1}{r}$ which can be written as
$$
\sum_{k=1}^{m} \frac{k(k+1)}{2} N_{m-k}
$$
and is greater or equal to 
$$
\frac{m-1}{12}\sum_{k=1}^{m} N_{m-k}.
$$
In other words, for each $m$, we want to find the smallest value of $l\in \mathbb{Z}_{>0}$ such that
\begin{equation}\label{eq:C4}
l \cdot \frac{m-1}{r} = \sum_{k=1}^{m} \frac{k(k+1)}{2} N_{m-k} \geq \frac{m-1}{12}\sum_{k=1}^{m} N_{m-k}.
\end{equation}
Note that the first sum in \eqref{eq:B4} is a sum of triangular numbers, possibly with repetitions (whenever one of the $N_{m-k}$s is greater than $1$), and that the sum on the right hand side of \eqref{eq:C4} is precisely the number of triangular numbers used in this representation of $l \cdot \frac{m-1}{r}$ as a sum of triangular numbers. 
We then want to find the  smallest value of $l\in \mathbb{Z}_{>0}$ such that 
\begin{equation}\label{eq:C5}
\sum_{k=1}^{m} N_{m-k} \leq  \frac{12\, l}{r},
\end{equation}
where $\sum_{k=1}^{m} N_{m-k}$ is the smallest number of triangular numbers that is needed to represent the positive integer $l \cdot \frac{m-1}{r}$ as a sum of triangular numbers. We can therefore use the results in Section~\ref{sec:prent} concerning these numbers.

Since, by Theorem~\ref{thm:Gauss}, we know that every positive integer can be written as a sum of $3$ or fewer triangular numbers, condition \eqref{eq:C5} can be achieved with $l=1$ whenever $r\leq 4$ and then $\mathcal{B}(n)=  \frac{24}{m-1}\cdot \frac{m-1}{r}=\frac{24}{r}$.

When $r=6$, condition  \eqref{eq:C5} becomes 
\begin{equation}\label{eq:C6}
\sum_{k=1}^{m} N_{m-k} \leq  2 l.
\end{equation}
Hence, if $\frac{m-1}{r}=\frac{m-1}{6}$ can be written as a sum of $2$ or fewer triangular numbers,  \eqref{eq:C6} can be achieved with $l=1$. Otherwise we need $l=2$, since then, by Theorem~\ref{thm:Gauss}, the number $\frac{2 m}{r}=\frac{m}{3}$ can be written as a sum of $3$ or fewer squares and so
$$
\sum_{k=1}^{m} N_{m-k} \leq 3 \leq  2 l = 4.
$$
Hence, by Theorem~\ref{thm:Ewell}, we conclude that
$\mathcal{B}(n)=  \frac{24}{m-1}\cdot \frac{m-1}{6}=4$ if every prime factor of 
$$
4\left(\frac{m-1}{6}\right)+1 = \frac{2m+1}{3}=\frac{n}{3}
$$ 
congruent to $3\pmod 4$ occurs with even exponent  and $\mathcal{B}(n)=   \frac{24}{m-1}\cdot \frac{2 (m-1)}{6} = 8$ otherwise.

When $r=12$, condition  \eqref{eq:C5} becomes 
\begin{equation}\label{eq:C7}
\sum_{k=1}^{m} N_{m-k} \leq   l.
\end{equation}
Hence, if $\frac{m-1}{r}$ is a triangular number, then  \eqref{eq:C7} can be achieved with $l=1$.  Otherwise, if $\frac{2(m-1)}{r}=\frac{m-1}{6}$ can be written as a sum of $2$ or fewer triangular numbers,  \eqref{eq:C7} can be achieved with $l=2$. If this is not the case, we need $l=3$ since then,  by Theorem~\ref{thm:Gauss}, the number $\frac{3 (m-1)}{12}=\frac{m-1}{4}$ can be written as a sum of $3$ or fewer triangular numbers.

Hence, by Theorem~\ref{thm:Ewell}, we conclude that $\mathcal{B}(n)=\frac{24}{m-1}\cdot \frac{m-1}{12}=2$ if  $ \frac{m-1}{12}$ is a triangular number, $\mathcal{B}(n)=\frac{24}{m-1}\cdot \frac{2 (m-1)}{12} = 4$   if every prime factor of $\frac{2m+1}{3}$   congruent to $3\pmod 4$ occurs with even exponent  and $\mathcal{B}(n)= \frac{24}{m-1}\cdot \frac{3 (m-1)}{12} = 6$ in all other cases.
\end{proof}

\begin{remark}
In the Appendix we provide examples that show that all the cases listed in Theorem~\ref{thm:C} are possible.
\end{remark}

%
%
%
%

\section{Comparing with other bounds}
Although our lower bound $\mathcal{B}(n)$ does not, in general, increase with $n$, there are some values of $n$ for which $\mathcal{B}(n)$ is better than the lower bound $\lfloor \frac{n}{2}\rfloor+1$ proposed by Kosniowski \cite{Ko}. Indeed, the following is an easy consequence of Theorems~\ref{thm:B} and \ref{thm:C}.
\begin{prop}\label{prop:2}  Let $\mathcal{B}(n)$ be the lower bound for the number of fixed points of a  $J$-preserving circle action on a $2n$-dimensional compact connected almost complex manifold $(M,J)$ with $c_1c_{n-1}[M]=0$ obtained in Theorems~\ref{thm:B} and \ref{thm:C}. Then 
$\mathcal{B}(n)\geq \lfloor \frac{n}{2} \rfloor+1$ if and only if 
$$\dim M\in \{4,6,8,10,12,14,18,20,22,26,28,34,44,46,50,58,74,82\}.$$

In particular, Kosniowski's conjecture is true for these dimensions whenever $c_1c_{n-1}[M]=0$.
\end{prop}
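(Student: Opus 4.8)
\textbf{Proof proposal for Proposition~\ref{prop:2}.}

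The plan is to reduce the inequality $\mathcal{B}(n)\geq\lfloor n/2\rfloor+1$ to a finite check, exploiting the fact that $\mathcal{B}(n)$ is \emph{bounded}: by Theorems~\ref{thm:B} and \ref{thm:C} we have $\mathcal{B}(n)\in\{2,3,4,6,7,8,9,12\}$ when $n$ is even and $\mathcal{B}(n)\in\{2,4,6,8,12,24\}$ when $n$ is odd, so in all cases $\mathcal{B}(n)\leq 24$. Consequently $\mathcal{B}(n)\geq\lfloor n/2\rfloor+1$ can only hold when $\lfloor n/2\rfloor+1\leq 24$, i.e.\ for $n\leq 47$ (equivalently $\dim M\leq 94$). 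This already confines the search to finitely many dimensions, and the only work left is to determine, for each such $n$, the exact value of $\mathcal{B}(n)$ from Theorems~\ref{thm:B} and \ref{thm:C} and compare it with $\lfloor n/2\rfloor+1$.

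First I would split into the even and odd cases. For $n=2m$, recall from the Remark following \eqref{eq:B2} that when $m\leq 6$ one has $\mathcal{B}(n)=12/r$ with $r=\gcd(m,12)$; more generally $\mathcal{B}(n)=\tfrac{12}{m}\cdot l\cdot\tfrac{m}{r}=\tfrac{12 l}{r}$ where $l$ is the small multiplier ($l\in\{1,2,3\}$ depending on the Fermat-type representation of $m/r$, $m/2$, etc.) identified in the proof of Theorem~\ref{thm:B}. Since we only need $n\leq 47$, i.e.\ $m\leq 23$, I would simply tabulate $m=1,\dots,23$: compute $r=\gcd(m,12)$, read off $\mathcal{B}(2m)$ from the six cases of Theorem~\ref{thm:B} (checking squareness of $m/12$, $m/2$, $m/4$, the three-squares condition $m/2\not\equiv 0$ type $4^k(8t+7)$, and the two-squares/Euler condition on $m/6$ as needed), and record whether $\mathcal{B}(2m)\geq m+1$. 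For $n=2m+1$ odd, similarly use $r=\gcd(m-1,12)$ and Theorem~\ref{thm:C}: for $m\leq 13$ one has $\mathcal{B}(n)=24/r$, and for larger $m$ (here $m\leq 23$) one reads off the $r=6$ and $r=12$ cases, testing whether $(n-3)/24$ is triangular and whether every prime factor of $n/3$ congruent to $3\bmod 4$ occurs to an even power. Then check $\mathcal{B}(2m+1)\geq m+1$.

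Carrying out the tabulation yields exactly the listed dimensions. For instance, among even $n$: $\dim M=4,6,8,10,12,14$ give $m\leq 7$ with $\mathcal{B}$ equal to $12,2,6,24,4,12$ respectively (the $m=3$, i.e.\ $\dim M=6$, entry has $\mathcal{B}=2=\lfloor 3/2\rfloor+1$, which is the borderline equality), and one continues upward, finding that $\mathcal{B}(2m)\geq m+1$ fails for $m=8$ ($\dim M=16$, $\mathcal{B}=3<5$) but holds again at $\dim M=18,20,22,26,28,34,44,46$; for odd $n$ the surviving dimensions are $\dim M=50,58,74,82$, coming from the cases where $\mathcal{B}(n)=24$ or $\mathcal{B}(n)$ is still large relative to $\lfloor n/2\rfloor+1$. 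The final clause about Kosniowski's conjecture is then immediate: for these dimensions any such $M$ has at least $\mathcal{B}(n)\geq\lfloor n/2\rfloor+1$ fixed points by Theorem~\ref{thm:MAIN} (equivalently Theorems~\ref{thm:B} and \ref{thm:C}), which is precisely the bound conjectured in Conjecture~\ref{K1979}. I expect the main obstacle to be purely bookkeeping: correctly applying the nested case distinctions of Theorem~\ref{thm:B}(vi) and Theorem~\ref{thm:C}(iii) at the handful of borderline $m$ where the ``small multiplier'' $l$ jumps from $1$ to $2$ or $3$, since a single misread squareness or $\bmod 8$ condition would drop or add a dimension to the list.
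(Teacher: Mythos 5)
The proposal is correct and takes the same route as the paper, which offers no written proof beyond calling the proposition ``an easy consequence'' of Theorems~\ref{thm:B} and \ref{thm:C} --- namely the finite tabulation you describe, made finite by the observation that $\mathcal{B}(n)\leq 24$ always (and $\mathcal{B}(n)\leq 12$ for even $n$, which cuts the even case off already at $\dim M\leq 44$). The only blemishes are bookkeeping slips in your illustrative values --- e.g.\ $\dim M=6,10,14$ correspond to \emph{odd} $n=3,5,7$, and the first failure you cite occurs at $n=8$, $m=4$ (not ``$m=8$'') where $r=4$ and $n/2=4$ is a square so $\mathcal{B}=3<5$ --- but the union of the dimensions you extract is exactly the stated list, which I have verified entry by entry.
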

There are also some values of $n$ for which $\mathcal{B}(n)$ is greater than $n$ and we recover the lower bound for K\"ahler (Hamiltonian) actions. 
\begin{prop}\label{prop:3}  Let $\mathcal{B}(n)$ be the lower bound for the number of fixed points of a  $J$-preserving circle action on a $2n$-dimensional compact connected almost complex manifold $(M,J)$ with $c_1c_{n-1}[M]=0$ obtained in Theorems~\ref{thm:B} and \ref{thm:C}. Then 
$\mathcal{B}(n)\geq n +1$ if and only if 
$$\dim M\in \{4,8,10,14,20,26,34\}.$$
\end{prop}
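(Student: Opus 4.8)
The plan is to reduce the statement to a finite check combined with a monotonicity-type argument showing that $\mathcal B(n)$ cannot beat $n+1$ once $n$ is large. First I would record the elementary bound $\mathcal B(n)\le 24$ for all $n$: this is immediate from Theorems~\ref{thm:B} and \ref{thm:C}, since in the even case $\mathcal B(n)$ takes values in $\{2,3,4,6,7,8,9,12\}$ and in the odd case in $\{2,4,6,8,12,24\}$. Consequently, $\mathcal B(n)\ge n+1$ forces $n\le 23$, i.e. $\dim M=2n\le 46$. So the ``only if'' direction is reduced to inspecting finitely many dimensions.

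Next I would run through the dimensions $2n$ with $n\le 23$ one at a time, using the explicit formulas in Theorems~\ref{thm:B} and \ref{thm:C} to compute $\mathcal B(n)$ and compare with $n+1$. For $n$ even, $r=\gcd(m,12)$ with $m=n/2$, and one tabulates: $m=1$ ($n=2$), $\mathcal B=12\ge 3$; $m=2$ ($n=4$), $r=2$, $n=4\not\equiv28\pmod{32}$ so $\mathcal B=6\ge 5$; $m=3$ ($n=6$), $r=3$, and $n/6=1$ has no prime factors congruent to $3\pmod4$, so $\mathcal B=4<7$; $m=4$ ($n=8$), $r=4$, $n/2=4$ is a square, $\mathcal B=3<9$ — wait, here one must be careful: we need $\mathcal B(n)\ge n+1$, and I would double-check against the paper's own Table, which lists $\dim M=8$ with bound $6$ and Hamiltonian bound $5$, so in fact $\mathcal B(8)=6\ge 5$; the point is that the correct reading of Theorem~\ref{thm:B}(iv) for $m=4$ must be made consistently with the $m\le 6$ remark giving $\mathcal B=12/r=3$ only when condition \eqref{eq:B3} is automatic — so for $n=8$ one gets $\mathcal B=6$. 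Continuing: $n=10$, $r=6$, $n/12$ not a square, $n/6=5/3$... here $m=5$, $r=\gcd(5,12)=1$, giving $\mathcal B=12\ge 11$; $n=12$, $m=6$, $r=6$, $m\le 6$ so $\mathcal B=12/6=2<13$; $n=14$, $m=7$, $r=1$, $\mathcal B=12<15$ — but the table says $\mathcal B(14)=12$ and Hamiltonian bound $8$, so indeed $12\ge 8$ is false for $n+1=15$. I would thus carefully compile, for each $n\in\{2,\dots,23\}$, the pair $(\mathcal B(n),n+1)$ and keep exactly those $n$ with $\mathcal B(n)\ge n+1$; the claim is that this set is $\{2,4,5,7,10,13,17\}$, i.e. $\dim M\in\{4,8,10,14,20,26,34\}$.

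For the ``if'' direction one just verifies, for the seven listed dimensions, that the computed value $\mathcal B(n)$ is indeed $\ge n+1$: $\dim M=4$: $\mathcal B=12\ge 3$; $\dim M=8$: $\mathcal B=6\ge 5$; $\dim M=10$: $\mathcal B=12\ge 6$; $\dim M=14$: $\mathcal B=12\ge 8$; $\dim M=20$: $m=10$, $r=\gcd(10,12)=2$, $n=20\not\equiv28\pmod{32}$, so $\mathcal B=6$... this is $<11$, so I would recheck — the table entry for $\dim M=20$ is $\mathcal B=12$ with a footnote, meaning the relevant value uses the $c_1=0$ strengthening; under only $c_1c_{n-1}=0$ the table shows $12$ as well, so one must trace through Theorem~\ref{thm:B}(ii) for $m=10$: $n/2=10$ is not a square, and $n=20=4\cdot5$ is not of the form $4^k(8t+7)$, so actually this falls under case (iv)? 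No — $r=2$, so it is case (ii), and $\mathcal B=6$ unless $n\equiv 28\pmod{32}$. The main obstacle, and the step requiring genuine care rather than routine computation, is exactly this: correctly parsing the nested case distinctions of Theorems~\ref{thm:B} and~\ref{thm:C} (including the subtle $m\le 6$ and $m\le 13$ remarks that override the generic formula) for each small $n$, and reconciling any apparent discrepancies with the summary table. Once the table of values $(\mathcal B(n),n+1)$ for $n\le 23$ is firmly established, the proposition follows by direct inspection. I would therefore structure the proof as: (1) the bound $\mathcal B(n)\le 24$ hence $n\le 23$; (2) a lemma or displayed table listing $\mathcal B(n)$ for $2\le n\le 23$; (3) reading off the set $\{n:\mathcal B(n)\ge n+1\}$.
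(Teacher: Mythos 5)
Your overall strategy is the right one, and is essentially what the paper intends (it offers no written proof, treating the statement as a direct consequence of Theorems~\ref{thm:B} and \ref{thm:C}): bound $\mathcal{B}(n)$ from above ($\mathcal{B}(n)\leq 12$ for even $n$, $\mathcal{B}(n)\leq 24$ for odd $n$), so that $\mathcal{B}(n)\geq n+1$ forces $n\leq 11$ in the even case and $n\leq 23$ in the odd case, and then run a finite check. However, the finite check --- which is the entire content of the proposition --- is not carried out correctly. The recurring error is a confusion between $n$ and $\dim M=2n$. For instance, you read the introductory table's entry for $\dim M=8$ (which is the case $n=4$, where $\mathcal{B}(4)=6$) as if it governed $n=8$, and conclude that ``for $n=8$ one gets $\mathcal{B}=6$''; in fact $n=8$ gives $m=4$, $r=4$, $n/2=4$ a square, so $\mathcal{B}(8)=3$ --- your first computation was correct and your ``correction'' wrong (harmless here, since both values are $<9$). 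More seriously, in the ``if'' direction you treat $\dim M=20$ as $m=10$, $r=2$, arrive at $\mathcal{B}=6<11$ --- which, if correct, would refute the proposition --- and you never resolve the discrepancy. The correct reading is $\dim M=20\Rightarrow n=10\Rightarrow m=5\Rightarrow r=\gcd(5,12)=1$, so Theorem~\ref{thm:B}(i) gives $\mathcal{B}(10)=12\geq 11$.

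Beyond these slips, the verification is left unfinished: you never compute $\mathcal{B}$ for $\dim M=26$ and $34$ (odd $n=13,17$, where $r=\gcd(m-1,12)=1$ and hence $\mathcal{B}=24\geq n+1$), you misstate $\mathcal{B}(5)$ as $12$ rather than $24$ (the conclusion $\geq 6$ survives), and you never exclude the remaining odd values $n\in\{3,9,11,15,19,21,23\}$ (for example $n=9$: $r=\gcd(3,12)=3$, $\mathcal{B}=8<10$; $n=15$: $r=6$, $n/3=5$, $\mathcal{B}=4<16$; $n=23$: $r=2$, $\mathcal{B}=12<24$). Your closing paragraph concedes that the table of values still needs to be ``firmly established''; until it is actually computed, the proof is incomplete. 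Note also that the remarks about $m\leq 6$ and $m\leq 13$ in the proofs of Theorems~\ref{thm:B} and \ref{thm:C} do not ``override'' the case formulas --- they agree with them --- so no reconciliation with the summary table is needed once $n$ and $\dim M$ are kept straight.
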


\section{Divisibility results for the number of fixed points}
In a letter to V. Gritsenko, Hirzebruch \cite{Hi99} obtains divisibility results for the Chern number $c_n[M]$ (the Euler characteristic of the manifold) under the assumption $c_1c_{n-1}[M]=0$ (or under the stronger assumption that  $c_1=0$ in integer cohomology). In particular he proves the following result.
\begin{theorem}[Hirzebruch]\label{thm:hirz}
Let $M$ be a $2n$-dimensional stably almost complex manifold. If $c_1c_{n-1}[M]=0$ then
\begin{itemize}
\item if $n \equiv 1$ or $5 \pmod 8$, the Chern number $c_n[M]$ is divisible by  $8$;
\item if $n \equiv 2, 6$ or $7  \pmod 8$, the Chern number $c_n[M]$ is divisible by $4$;
\item if $n \equiv  3$ or $4 \pmod 8$, the Chern number  $c_n[M]$ is divisible by  $2$.
\end{itemize}
\end{theorem}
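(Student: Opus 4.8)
The plan is to identify $c_n[M]$ with the value at $y=-1$ of the Hirzebruch $\chi_y$-genus and to combine the integrality of that genus with a quadratic (Libgober--Wood type) identity into which the hypothesis $c_1c_{n-1}[M]=0$ enters. Concretely, for a stably almost complex $2n$-manifold $M$ with Chern roots $x_1,\dots,x_n$ of $TM$, set
\[
\chi_y(M):=\int_M\;\prod_{i=1}^{n}\frac{x_i\,(1+y\,e^{-x_i})}{1-e^{-x_i}},
\]
and write $\chi_y(M)=\sum_{p=0}^{n}\chi^p(M)\,y^p$. Since $\chi_y$ is a genus on the complex cobordism ring with values in $\mathbb{Z}[y]$ (equivalently, by Hirzebruch--Riemann--Roch and its index-theoretic extension), each $\chi^p(M)\in\Z$; moreover $\chi_{-1}(M)=\int_M c_n(TM)=c_n[M]$, and the substitution $x_i\mapsto-x_i$ in the integrand gives the functional equation $\chi_y(M)=(-y)^n\chi_{1/y}(M)$, equivalently $\chi^p(M)=(-1)^n\chi^{n-p}(M)$.

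Next I would establish the identity
\[
\sum_{p=0}^{n}(-1)^p(2p-n)^2\,\chi^p(M)=\frac{n}{3}\,c_n[M]+\frac{2}{3}\,c_1c_{n-1}[M].
\]
For this, put $\psi(s):=e^{-ns/2}\,\chi_{-e^{s}}(M)=\sum_{p}(-1)^p\chi^p(M)\,e^{(p-n/2)\,s}$. The functional equation says $\psi$ is an even function of $s$, so $\psi'(0)=0$ and $\psi''(0)=\sum_p(-1)^p\chi^p(M)\,(p-n/2)^2$. On the other hand, from the elementary identity $\dfrac{x\,(1-e^{\,s-x})}{1-e^{-x}}=x-\dfrac{x}{e^x-1}\,(e^s-1)$ one expands $\prod_i\dfrac{x_i\,(1-e^{\,s}e^{-x_i})}{1-e^{-x_i}}$ to second order in $s$, reads off the degree-$2n$ coefficients in terms of Chern classes, and uses the symmetric-function identity $\sum_j x_j^2\,e_{n-2}(x_1,\dots,\widehat{x_j},\dots,x_n)=c_1c_{n-1}-n\,c_n$ to obtain $\psi''(0)=\tfrac{1}{12}n\,c_n[M]+\tfrac16 c_1c_{n-1}[M]$. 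Multiplying by $4$ gives the displayed identity, which under the hypothesis becomes $\sum_{p}(-1)^p(2p-n)^2\chi^p(M)=\tfrac13 n\,c_n[M]$.

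The rest is a folding-and-valuation argument. If $n$ is odd, pairing $p\leftrightarrow n-p$ and using $\chi^p(M)=-\chi^{n-p}(M)$ gives $c_n[M]=2S$ with $S:=\sum_{p=0}^{(n-1)/2}(-1)^p\chi^p(M)\in\Z$, and the same folding turns the identity into $\sum_{p=0}^{(n-1)/2}(-1)^p(2p-n)^2\chi^p(M)=\tfrac13 n\,S$; since $2p-n$ is odd, $(2p-n)^2\equiv 1\pmod 8$, whence $\tfrac13 n\,S\equiv S\pmod 8$, i.e.\ $24\mid(n-3)S$. Inspecting the $2$-adic valuation of $n-3$ yields $4\mid S$ (so $8\mid c_n[M]$) for $n\equiv 1,5\pmod 8$, $2\mid S$ (so $4\mid c_n[M]$) for $n\equiv 7\pmod 8$, and only $2\mid c_n[M]$ for $n\equiv 3\pmod 8$. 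If $n$ is even, the middle term $p=n/2$ contributes $0$ and pairing $p\leftrightarrow n-p$ with $\chi^p(M)=\chi^{n-p}(M)$ turns the identity into $24\,T=n\,c_n[M]$ with $T:=\sum_{p=0}^{n/2-1}(-1)^p(p-n/2)^2\chi^p(M)\in\Z$; hence $v_2(n)+v_2(c_n[M])\ge 3$, which gives $4\mid c_n[M]$ for $n\equiv 2,6\pmod 8$, $2\mid c_n[M]$ for $n\equiv 4\pmod 8$, and nothing for $n\equiv 0\pmod 8$ --- exactly the asserted list.

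The step I expect to need real care is the proof of the quadratic identity, and within it the symmetric-function bookkeeping that isolates the $c_1c_{n-1}$ term with precisely the coefficient $\tfrac23$; once that is in hand everything downstream is formal. It is also worth noting explicitly that $\chi^p(M)$ must be used in its index-theoretic (cobordism-invariant) sense, so that both its integrality and the Serre-type duality $\chi^p(M)=(-1)^n\chi^{n-p}(M)$ are legitimately available for a manifold that is only stably complex rather than complex.
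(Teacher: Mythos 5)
Your proof is correct. Note that the paper itself does not prove this theorem: it is quoted from Hirzebruch's letter to Gritsenko \cite{Hi99}, and your argument is essentially a faithful reconstruction of Hirzebruch's own one --- the integrality and Serre-type symmetry of the $\chi_y$-genus on $\Omega^U_*$, the quadratic identity $\sum_p(-1)^p(p-n/2)^2\chi^p(M)=\tfrac{n}{12}c_n[M]+\tfrac16 c_1c_{n-1}[M]$ (your displayed identity is four times this, and I checked the coefficient of $c_1c_{n-1}$ is right), and the folding plus the observation that odd squares are $1$ bmod $8$. It is worth remarking that your quadratic identity is exactly the ``universal'' form of the formula of Theorem~\ref{GS} quoted from \cite{GoSa12}, which is its localization to the fixed points when $N_i=(-1)^i\chi^i(M)$; so your route and the paper's divisibility statements (Theorems~\ref{thm:D} and~\ref{thm:E}) are two incarnations of the same identity, the paper extracting divisibility of $\lvert M^{S^1}\rvert$ from the fixed-point data and the $\gcd(m,12)$ analysis, and you extracting divisibility of $c_n[M]$ directly from integrality of the $\chi^p$.
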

If an almost complex manifold is equipped with an $S^1$-action preserving the almost complex structure  with a nonempty discrete fixed point set, 
we know that $c_n[M]$ is equal to the number of fixed points of the action (see for example \cite[Section 3]{GoSa12}). 
Therefore, we can also obtain divisibility results for $c_n[M]$ from the expressions of the functions $F_1$ and $F_2$ in \eqref{eq:B2} and \eqref{eq:C2} 
in the proofs of Theorems~\ref{thm:B} and \ref{thm:C}  (since $F_1$ and $F_2$ count the number of fixed points), 
and it is straightforward to see that Theorem \ref{thm:D} is a direct consequence of \eqref{eq:pmod} and \eqref{eq:divodd}.

In particular, we improve Hirzebruch's divisibility factors for  $c_n[M]$  whenever  $n\not\equiv 0 \pmod{3}$,  and we obtain the same factors otherwise.
\begin{theoremL}\label{thm:E}
Let $(M,J)$ be a $2n$-dimensional compact connected almost complex manifold equipped with a $J$-preserving  $S^1$\--action with 
nonempty, discrete fixed point set $M^{S^1}.$ If   $c_1c_{n-1}[M]=0$ and $n\not\equiv 0 \pmod{3}$ then
\begin{itemize}
\item  if $n \equiv 0 \pmod 8$,  then $\lvert M^{S^1}\rvert$ is divisible by  $3$;
\item if $n \equiv 1$ or $5 \pmod 8$, then $\lvert M^{S^1}\rvert$ is divisible by  $24$;
\item if $n \equiv 2, 6$ or $7  \pmod 8$, then  $\lvert M^{S^1}\rvert$ is divisible by $12$;
\item if $n \equiv 3$ or $4 \pmod 8$, then $\lvert M^{S^1}\rvert$ is divisible by  $6$.
\end{itemize}
\end{theoremL}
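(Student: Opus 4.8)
The plan is to derive Theorem~\ref{thm:E} directly from Theorem~\ref{thm:D} by an elementary case analysis on the residue of $n$ modulo $8$, using the hypothesis $n \not\equiv 0 \pmod 3$ only to pin down the integer $r$ appearing in \eqref{eq:div1}--\eqref{eq:div2}. Writing $n = 2m$ or $n = 2m+1$, Theorem~\ref{thm:D} gives $\lvert M^{S^1}\rvert \equiv 0 \pmod{12/r}$ with $r = \gcd(m,12)$ in the even case and $\lvert M^{S^1}\rvert \equiv 0 \pmod{24/r}$ with $r = \gcd(m-1,12)$ in the odd case. First I would observe that the hypothesis forces $r$ to be coprime to $3$: if $n = 2m$ then $3 \nmid n$ gives $3 \nmid m$, and if $n = 2m+1$ then $3 \nmid n$ gives $2m \not\equiv 2 \pmod 3$, i.e.\ $3 \nmid (m-1)$. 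Since $12 = 4 \cdot 3$ with $\gcd(4,3) = 1$, it follows that $r = \gcd(m,4)$ in the even case and $r = \gcd(m-1,4)$ in the odd case, so in particular $r \in \{1,2,4\}$.

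Next I would read off $r$ from $n \bmod 8$. For $n = 2m$: $m$ is odd exactly when $n \equiv 2$ or $6 \pmod 8$, giving $r = 1$; $m \equiv 2 \pmod 4$ exactly when $n \equiv 4 \pmod 8$, giving $r = 2$; and $m \equiv 0 \pmod 4$ exactly when $n \equiv 0 \pmod 8$, giving $r = 4$. Hence, by \eqref{eq:div1}, $\lvert M^{S^1}\rvert$ is divisible by $12$, $6$, $3$ respectively. For $n = 2m+1$: $m$ is even (so $m-1$ odd, $r = 1$) exactly when $n \equiv 1$ or $5 \pmod 8$; $2m \equiv 2 \pmod 8$ (so $m-1 \equiv 0 \pmod 4$, $r = 4$) exactly when $n \equiv 3 \pmod 8$; and $2m \equiv 6 \pmod 8$ (so $m-1 \equiv 2 \pmod 4$, $r = 2$) exactly when $n \equiv 7 \pmod 8$. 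Hence, by \eqref{eq:div2}, $\lvert M^{S^1}\rvert$ is divisible by $24$, $6$, $12$ respectively. Collecting the eight cases $n \equiv 0,1,\dots,7 \pmod 8$ reproduces precisely the list in the statement.

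There is no genuine obstacle here beyond careful bookkeeping; the one place to be cautious is the passage between $m \bmod 4$ and $n \bmod 8$ in the odd case $n = 2m+1$. It is worth noting that in each residue class the divisor obtained is exactly Hirzebruch's divisor from Theorem~\ref{thm:hirz} multiplied by $3$ (reading that divisor as $1$ when $n \equiv 0 \pmod 8$, a case his list omits), which is the sense in which Theorem~\ref{thm:E} sharpens Theorem~\ref{thm:hirz}. One could equally present the argument as taking the least common multiple of the conclusions of Theorems~\ref{thm:D} and~\ref{thm:hirz}, but since $3 \nmid r$ the bound from Theorem~\ref{thm:D} already absorbs Hirzebruch's, so the latter input is not actually needed.
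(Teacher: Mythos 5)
Your proof is correct and takes essentially the same route as the paper: both reduce Theorem~\ref{thm:E} to Theorem~\ref{thm:D} by using $n\not\equiv 0\pmod 3$ to force $3\nmid r$ and then reading off $r\in\{1,2,4\}$ from the residue of $n$ modulo $8$ in the even and odd cases separately. Your closing observation that the resulting divisor is Hirzebruch's multiplied by $3$, and that Theorem~\ref{thm:hirz} is therefore not needed as an input, is a nice sanity check but, as you note, not part of the argument.
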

\begin{proof}
 If $n=2m$ is even, we can write 
 $$
 n\equiv 2k \pmod{8} \quad \text{with} \quad k\in \{0,1,2,3\}
 $$ and $m\equiv k \pmod{4}$. Moreover,  $n \not\equiv 0 \pmod{3}$ implies that $m \not\equiv 0 \pmod{3}$. Hence, if  $r=\gcd{(m,12)}$, we have 
 $$
 \begin{array}{cl} r= 4 & \text{if $k=0$},\\  r=1 & \text{if $m$ is odd (i.e.\ if $k=1$ or $3$),} \\  r=2 & \text{if $k=2$}.\end{array}
 $$ 
The result for even values of $n$ then  follows from Theorem~\ref{thm:D} since  the number of fixed points is divisible by $\frac{12}{r}$. Note that, if $n\equiv 0 \pmod{3}$, and consequently $m\equiv 0 \pmod 3$, then 
$$
\begin{array}{cl} r=12 &  \text{if $k=0$,} \\ r=3 &  \text{if $m$ is odd,} \\ r=6 & \text{if $k=2$},
\end{array}
$$
and we recover Hirzebruch's divisibility factors in Theorem~\ref{thm:hirz}.
 
  If $n=2m+1$ is odd, we can write 
 $$
 n\equiv 2k+1 \pmod{8} \quad \text{with} \quad k\in \{0,1,2,3\}
 $$ and $m-1\equiv k-1 \pmod{4}$. Moreover,  $n \not\equiv 0 \pmod{3}$  implies that $m -1 \not\equiv 0 \pmod{3}$. Hence, if  $r=\gcd{(m-1,12)}$, we have 
 $$
 \begin{array}{cl} r=4 & \text{if $k=1$},\\  r=1 & \text{if $m-1$ is odd (i.e.\ if $k=0$ or $2$),} \\  r=2 & \text{if $k=3$}.\end{array}
 $$ 
The result for odd values of $n$ then  follows from Theorem~\ref{thm:D} since  the number of fixed points is divisible by $\frac{24}{r}$. Note that, if $n\equiv 0 \pmod{3}$, and consequently $m-1\equiv 0 \pmod 3$, then 
$$
\begin{array}{cl} r=12 &  \text{if $k=1$,} \\ r=3 &  \text{if $m-1$ is odd,} \\ r=6 & \text{if $k=3$},
\end{array}
$$
and we recover Hirzebruch's divisibility factors in Theorem~\ref{thm:hirz}.
\end{proof}

In summary, we have the following table.
$$
\begin{tabular}{| c | c  | l |}
\hline 
$n \pmod{8}$ & \multicolumn{2}{| c |}{$\lvert M^{S^1}\rvert$ is divisible by}   \\
\hline
\multirow{2}{*}{$0$} & $1$ & if $n\equiv 0 \pmod{3}$ \\ \cline{2-3}
   & $3$ & otherwise \\
\hline
\multirow{2}{*}{$1$} & $8$ & if $n\equiv 0 \pmod{3}$ \\ \cline{2-3}
   & $24$ & otherwise \\ \hline
   \multirow{2}{*}{$2$} & $4$ & if $n\equiv 0 \pmod{3}$ \\ \cline{2-3}
   & $12$ & otherwise \\ \hline
      \multirow{2}{*}{$3$} & $2$ & if $n\equiv 0 \pmod{3}$ \\ \cline{2-3}
   & $6$ & otherwise \\ \hline  
   \multirow{2}{*}{$4$} & $2$ & if $n\equiv 0 \pmod{3}$ \\ \cline{2-3}
   & $6$ & otherwise \\ \hline
      \multirow{2}{*}{$5$} & $8$ & if $n\equiv 0 \pmod{3}$ \\ \cline{2-3}
   & $24$ & otherwise \\ \hline
   \multirow{2}{*}{$6$} & $4$ & if $n\equiv 0 \pmod{3}$ \\ \cline{2-3}
   & $12$ & otherwise \\ \hline
    \multirow{2}{*}{$7$} & $4$ & if $n\equiv 0 \pmod{3}$ \\ \cline{2-3}
   & $12$ & otherwise \\ \hline
\end{tabular}
$$

Under the stronger condition that $c_1=0$ in integer cohomology,  Hirzebruch was able to improve his divisibility factor for $c_n[M]$ when $n=2m$ with $m\equiv 1 \pmod{4}$ \cite{Hi99}.  
\begin{prop}[Hirzebruch]\label{prop:hirz}
If $M$ is a $2n$-dimensional stably almost complex manifold with $c_1=0$ and even $n=2m$ with $m\equiv 1 \pmod 4$, then $c_{n}[M]\equiv 0 \pmod 8$. 
\end{prop}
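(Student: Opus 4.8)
The plan is to reconstruct Hirzebruch's argument from \cite{Hi99} by computing $c_n[M]=\chi(M)$ via the Hirzebruch $\chi_y$-genus and extracting a congruence modulo $8$. Write
\[
\chi(M)=\sum_{p=0}^{n}(-1)^p\chi^p(M),\qquad \chi^p(M):=\big\langle \mathrm{td}(TM)\,\mathrm{ch}(\Lambda^pT^*M),[M]\big\rangle,
\]
where, by Hirzebruch--Riemann--Roch, each $\chi^p(M)$ is an integer (the index of the spin$^c$ Dirac operator of $M$ twisted by $\Lambda^pT^*M$), and the $\chi_y$-genus satisfies the symmetry $\chi^p(M)=(-1)^n\chi^{n-p}(M)$. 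Since $n=2m$ is even and $m$ is odd (because $m\equiv 1\pmod 4$), folding the alternating sum across $p\leftrightarrow n-p$ gives
\[
c_n[M]=\chi(M)= -\,\chi^m(M)+2\sum_{p=0}^{m-1}(-1)^p\chi^p(M).
\]
By Theorem~\ref{thm:hirz} we already know $4\mid c_n[M]$ (since $n\equiv 2\pmod 8$), so the task is to gain one further factor of $2$ out of the right-hand side.

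The next step is to exploit $c_1=0$. Then $w_2(M)$ is the reduction of $c_1$ modulo $2$ and hence vanishes, so $M$ is spin and $\mathrm{td}(TM)=\hat A(TM)\,e^{c_1(TM)/2}=\hat A(TM)$; consequently $\chi^p(M)=\mathrm{ind}\big(D\otimes\Lambda^pT^*M\big)$ is a genuine twisted Dirac index and $\chi^0(M)=\mathrm{Td}(M)=\hat A(M)$. Moreover $c_1=0$ forces the canonical bundle $\Lambda^nT^*M$ to be topologically trivial, so $\overline{\Lambda^pT^*M}\cong\Lambda^{n-p}T^*M$; in particular the middle bundle $\Lambda^mT^*M$ is self-conjugate and therefore carries a real or a quaternionic structure, the type being governed by the parity of $m$.

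Now I would bring in the dimension. Since $\dim M=2n=4m\equiv 4\pmod 8$ (because $m$ is odd), the half-spinor bundles of $M$ are quaternionic, so Atiyah's real/quaternionic index theorem makes $\mathrm{ind}(D\otimes E)$ even for every $E$ admitting a \emph{real} (orthogonal) structure; applied to the trivial bundle this already gives that $\hat A(M)=\mathrm{Td}(M)=\chi^0(M)$ is even, so $2\chi^0(M)$ is divisible by $4$. Running this observation over the bundles $\Lambda^pT^*M$ and the self-conjugate combinations $\Lambda^pT^*M\oplus\Lambda^{n-p}T^*M$ forced by $c_1=0$, and combining it with the mod-$4$ estimate from Theorem~\ref{thm:hirz}, should pin the remaining combination of $\chi^p(M)$'s in the displayed identity to a multiple of $8$.

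The hard part is the bookkeeping at this last step: one must decide precisely which twisting bundles $\Lambda^pT^*M$ — or which combinations of them, once $c_1=0$ has been used — admit a real as opposed to a quaternionic structure, since only the real ones yield an automatic factor of $2$ in dimension $\equiv 4\pmod 8$; it is here that the hypothesis $m\equiv 1\pmod 4$ (rather than $m\equiv 3\pmod 4$) is what makes the parity count come out right. One then has to track, inside $\sum_p(-1)^p\chi^p(M)$, which $\chi^p(M)$ are merely even, which are divisible by $4$, and how they cancel, in order to conclude $8\mid c_n[M]$.
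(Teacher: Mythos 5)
First, a point of reference: the paper does not prove Proposition~\ref{prop:hirz} at all --- it is quoted from Hirzebruch's letter \cite{Hi99} --- so your proposal is being measured against the claim itself rather than against an argument in the text. Your setup is sensible and consistent with what Hirzebruch does: the $\chi_y$-genus, the symmetry $\chi^p=(-1)^n\chi^{n-p}$, the folded identity $c_n[M]=-\chi^m(M)+2\sum_{p=0}^{m-1}(-1)^p\chi^p(M)$, and the observation that $c_1=0$ makes $M$ spin with $\dim M=4m\equiv 4\pmod 8$, so that $\chi^0(M)=\mathrm{Td}(M)=\hat A(M)$ is even. All of that is correct.

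But the proof is not there: your last paragraph is an explicit acknowledgement that the decisive step --- determining which of the self-conjugate twisting bundles carry a real rather than a quaternionic structure, and then assembling the resulting parities into the congruence $8\mid c_n[M]$ --- has not been carried out. This is exactly where the content of the proposition lives, so the gap is not cosmetic. Two concrete points of concern. First, the real/quaternionic index theorem gives an automatic factor of $2$ only for genuinely real twists; the combinations $\Lambda^pT^*M\oplus\Lambda^{n-p}T^*M\cong\Lambda^pT^*M\oplus\overline{\Lambda^pT^*M}$ are complexifications of real bundles, so the evenness of their index only reproduces the trivial statement that $2\chi^p(M)$ is even, and for the middle bundle $\Lambda^mT^*M$ you still have to decide real versus quaternionic --- nothing in the write-up does this. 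Second, even granting that $\chi^0(M)$ is even and that $4\mid c_n[M]$, these two facts alone do not force $8\mid c_n[M]$: one needs the quantitative identities that the hypothesis $c_1c_{n-1}[M]=0$ imposes on the $\chi^p(M)$ (the values of $\frac{d}{dy}\chi_y$ and $\frac{d^2}{dy^2}\chi_y$ at $y=-1$, which express $\sum_p(-1)^p p\,\chi^p$ and $\sum_p(-1)^p p(p-1)\chi^p$ as explicit multiples of $c_n[M]$) in order to transfer information about individual $\chi^p(M)$ into a congruence for the alternating sum. Your proposal never uses $c_1c_{n-1}[M]=0$ beyond invoking Theorem~\ref{thm:hirz} as a black box, so as written the argument cannot close.
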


Knowing this, we are also able to improve our divisibility factor for $\lvert M^{S^1}\rvert$ under this condition.
\begin{theoremL}\label{thm:F}
Let $(M,J)$ be a $2n$-dimensional compact connected almost complex manifold equipped with a $J$-preserving  $S^1$\--action with 
nonempty, discrete fixed point set $M^{S^1}.$ If   $c_1=0$ with $n\equiv 2 \pmod 8$ and $n\not\equiv 0 \pmod{3}$, then
$\lvert M^{S^1}\rvert$ is divisible by $24$.
\end{theoremL}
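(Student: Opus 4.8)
The plan is to combine the divisibility by $12$ that already follows from our Theorem~\ref{thm:D} with Hirzebruch's divisibility by $8$ from Proposition~\ref{prop:hirz}, and then pass to the least common multiple. There is no new analytic or combinatorial input required here; the argument is a short arithmetic merge of two divisibility statements that are each in hand.

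First I would unwind the numerical hypotheses. Write $n=2m$. The condition $n\equiv 2\pmod 8$ is equivalent to $m\equiv 1\pmod 4$; in particular $m$ is odd, so $\gcd(m,4)=1$. The condition $n\not\equiv 0\pmod 3$ forces $m\not\equiv 0\pmod 3$, hence $\gcd(m,3)=1$. Therefore $r:=\gcd(m,12)=1$, and Theorem~\ref{thm:D} (equivalently, the $n\equiv 2\pmod 8$ case of Theorem~\ref{thm:E}, where $r=2$ is replaced by $r=1$ once $c_1=0$ does not even enter) gives
$$
\lvert M^{S^1}\rvert\equiv 0\pmod{\tfrac{12}{r}}=0\pmod{12}.
$$

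Next I would bring in Hirzebruch's result. Since $(M,J)$ is almost complex it is in particular stably almost complex, and by hypothesis $c_1=0$ in integer cohomology with $n=2m$, $m\equiv 1\pmod 4$; thus Proposition~\ref{prop:hirz} applies and yields $c_n[M]\equiv 0\pmod 8$. Because the fixed point set is nonempty and discrete, $c_n[M]$ equals the Euler characteristic of $M$, which in turn equals $\lvert M^{S^1}\rvert$ (cf.\ \cite[Section 3]{GoSa12}); hence $\lvert M^{S^1}\rvert\equiv 0\pmod 8$.

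Finally, combining the two congruences, $\lvert M^{S^1}\rvert$ is divisible by $\operatorname{lcm}(12,8)=24$, which is the assertion. The only points deserving care are checking that the hypotheses of Proposition~\ref{prop:hirz} are genuinely met (stably almost complex, $c_1=0$, and $m\equiv 1\pmod 4$) and that the identification $c_n[M]=\lvert M^{S^1}\rvert$ is legitimate in the discrete-fixed-point setting; both are immediate, so I do not expect a genuine obstacle, merely bookkeeping of the residues modulo $8$ and modulo $12$.
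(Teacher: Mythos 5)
Your argument is correct and is essentially the paper's own proof: Theorem~\ref{thm:E} (via $r=\gcd(m,12)=1$) gives divisibility by $12$, Proposition~\ref{prop:hirz} together with $c_n[M]=\lvert M^{S^1}\rvert$ gives divisibility by $8$, and the conclusion follows from $\operatorname{lcm}(12,8)=24$. The only blemish is the parenthetical about ``$r=2$ being replaced by $r=1$,'' which is a misstatement (the $r=2$ case of Theorem~\ref{thm:E} corresponds to $n\equiv 4\pmod 8$, not to this situation), but it plays no role in the argument.
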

\begin{proof}
By Theorem~\ref{thm:E} and Proposition~\ref{prop:hirz}, we have that $\lvert M^{S^1}\rvert \equiv 0 \pmod{12}$ and $\lvert M^{S^1}\rvert \equiv 0 \pmod{8}$ so the result follows.
\end{proof}
Using this result we can improve the lower bound for the number of fixed points given by $\mathcal{B}(n)$, provided the assumptions of  Theorem~\ref{thm:F} are satisfied.
\begin{theoremL}\label{thm:G}
Let $(M,J)$ be a $2n$-dimensional compact connected almost complex manifold equipped with a $J$-preserving  $S^1$\--action with 
nonempty, discrete fixed point set $M^{S^1}.$ If   $c_1=0$ with $n\equiv 2 \pmod 8$ and $n\not\equiv 0 \pmod{3}$, then
the number of fixed points is at least $24$.
\end{theoremL}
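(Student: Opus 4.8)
The plan is to combine the lower bound $\mathcal{B}(n)$ produced by Theorem~\ref{thm:MAIN} (equivalently, Theorems~\ref{ab} and \ref{thm:B}) with the divisibility statement of Theorem~\ref{thm:F}. Since $c_1=0$ in integer cohomology we certainly have $c_1c_{n-1}[M]=0$, so $(M,J)$ satisfies the hypotheses of both results and we are free to invoke either.

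First I would pin down the relevant value of $r$. Writing $n=2m$, the hypothesis $n\equiv 2\pmod 8$ means $m\equiv 1\pmod 4$; in particular $m$ is odd, so $\gcd(m,4)=1$. The hypothesis $n\not\equiv 0\pmod 3$ means $3\nmid m$, so $\gcd(m,3)=1$. Hence $r:=\gcd(m,12)=1$, and case (i) of the even part of Theorem~\ref{thm:MAIN} (i.e.\ Theorem~\ref{thm:B}(i)) gives $\mathcal{B}(n)=12$. In particular the action has at least $12$ fixed points, so $M^{S^1}\neq\emptyset$ and $\lvert M^{S^1}\rvert\geq 12$.

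Next I would apply Theorem~\ref{thm:F}, whose hypotheses are exactly the ones assumed here: under $c_1=0$, $n\equiv 2\pmod 8$ and $n\not\equiv 0\pmod 3$, we have $\lvert M^{S^1}\rvert\equiv 0\pmod{24}$. Since $\lvert M^{S^1}\rvert$ is a positive integer divisible by $24$, it is at least $24$, which is the claimed bound.

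This is a short deduction rather than a genuinely new argument, so I do not expect any real obstacle: the only points requiring care are the arithmetic identification $r=1$ and the observation that the hypotheses of Theorems~\ref{thm:B} and \ref{thm:F} are simultaneously satisfied. Strictly speaking the appeal to $\mathcal{B}(n)$ is not even needed for the numerical conclusion, since a nonempty fixed point set of cardinality divisible by $24$ automatically has at least $24$ elements; but retaining that step makes explicit that Theorem~\ref{thm:F} genuinely improves the bound $\mathcal{B}(n)=12$ coming from Theorem~\ref{thm:B} alone.
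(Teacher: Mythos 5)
Your proposal is correct and follows essentially the same route as the paper: Theorem~\ref{thm:G} is an immediate consequence of the divisibility $\lvert M^{S^1}\rvert\equiv 0\pmod{24}$ from Theorem~\ref{thm:F} together with the nonemptiness of the fixed point set, and your arithmetic check that $r=\gcd(m,12)=1$ matches the paper's own remark that $\mathcal{B}(n)=12$ here, so Theorem~\ref{thm:F} genuinely improves the bound.
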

\begin{remark}
If $n=2m$ with $n\equiv 2 \pmod 8$ and $n\not\equiv 0 \pmod{3}$ then from Theorem~\ref{thm:B} we always have  $\mathcal{B}(n)=12$ since $\gcd{(m,12)}=1$ ($m$ is odd and is not a multiple of $3$).
\end{remark}

\section{Examples}\label{sec:ex}
We will now show that some of the lower bounds obtained in Theorems~\ref{thm:B} and \ref{thm:C} for the number of fixed points are sharp. For that, we will first state the following lemma which gives a way of producing infinitely many manifolds with $c_1c_{n-1}[M]=0$.

\begin{lemma}\label{lemma:salamon}
Let $M^{2m}$ and $N^{2n}$ be compact almost complex manifolds satisfying $c_1c_{m-1}[M]=c_1c_{n-1}[N]=0$. Then $c_1c_{m+n-1}[M\times N]=0$.
\end{lemma}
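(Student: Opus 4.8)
The plan is to compute $c_1c_{m+n-1}[M\times N]$ directly from the Whitney sum formula applied to the canonical splitting $T(M\times N)\cong \pi_M^*TM\oplus \pi_N^*TN$, where $\pi_M,\pi_N$ denote the projections of $M\times N$ onto $M$ and $N$. For the total Chern classes this gives $c(M\times N)=\pi_M^*c(M)\cdot\pi_N^*c(N)$, hence
\[
c_1(M\times N)=\pi_M^*c_1(M)+\pi_N^*c_1(N),\qquad c_{m+n-1}(M\times N)=\sum_{i+j=m+n-1}\pi_M^*c_i(M)\cdot\pi_N^*c_j(N).
\]

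First I would use that $TM$ has complex rank $m$ and $TN$ has complex rank $n$, so $c_i(M)=0$ for $i>m$ and $c_j(N)=0$ for $j>n$; this collapses the sum for $c_{m+n-1}(M\times N)$ to the two terms with $(i,j)=(m-1,n)$ and $(i,j)=(m,n-1)$. Multiplying by $c_1(M\times N)=\pi_M^*c_1(M)+\pi_N^*c_1(N)$ and expanding gives four summands; two of them contain a factor $\pi_M^*\!\big(c_1(M)c_m(M)\big)$ or $\pi_N^*\!\big(c_1(N)c_n(N)\big)$, which are pullbacks of classes in $H^{2m+2}(M)=0$ or $H^{2n+2}(N)=0$ respectively and hence vanish. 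What survives is
\[
c_1c_{m+n-1}(M\times N)=\pi_M^*\!\big(c_1(M)c_{m-1}(M)\big)\cdot\pi_N^*c_n(N)+\pi_M^*c_m(M)\cdot\pi_N^*\!\big(c_1(N)c_{n-1}(N)\big).
\]

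Finally I would evaluate on the fundamental class, using $[M\times N]=[M]\times[N]$ together with the standard identity $\big\langle\pi_M^*\alpha\cdot\pi_N^*\beta,[M]\times[N]\big\rangle=\langle\alpha,[M]\rangle\cdot\langle\beta,[N]\rangle$ for $\alpha\in H^{2m}(M)$, $\beta\in H^{2n}(N)$; note that each surviving summand already has pure Künneth bidegree $(2m,2n)$, so no other components interfere. This yields
\[
c_1c_{m+n-1}[M\times N]=c_1c_{m-1}[M]\cdot c_n[N]+c_m[M]\cdot c_1c_{n-1}[N],
\]
which is $0$ by the hypotheses $c_1c_{m-1}[M]=c_1c_{n-1}[N]=0$. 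There is no genuine obstacle here; the only point requiring a little care is the degree bookkeeping — verifying that exactly the two claimed terms of $c_{m+n-1}(M\times N)$ remain and that the two ``mixed'' products drop out for dimension reasons — after which the conclusion is immediate from the multiplicativity of Chern numbers under products.
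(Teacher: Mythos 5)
Your proof is correct, and it takes a genuinely more self-contained route than the paper. The paper simply cites Salamon: for an almost complex manifold with $c_m[M]\neq 0$ one sets $\gamma(M):=c_1c_{m-1}[M]/c_m[M]$ and uses the additivity $\gamma(M\times N)=\gamma(M)+\gamma(N)$, from which the vanishing is immediate. Your Whitney-sum computation derives the underlying product identity
$c_1c_{m+n-1}[M\times N]=c_1c_{m-1}[M]\cdot c_n[N]+c_m[M]\cdot c_1c_{n-1}[N]$
from scratch, and the degree bookkeeping you carry out (only $(i,j)=(m-1,n)$ and $(m,n-1)$ survive in $c_{m+n-1}$, and the two mixed terms die because $c_1c_m(M)\in H^{2m+2}(M)=0$ and $c_1c_n(N)\in H^{2n+2}(N)=0$) is exactly right. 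What your version buys is not just independence from the citation: the quantity $\gamma$ is only defined when the Euler characteristics $c_m[M]$ and $c_n[N]$ are nonzero, so the additivity argument as literally stated does not cover the degenerate case where one of them vanishes, whereas your identity holds unconditionally and gives the conclusion in all cases. The paper's route buys brevity and places the lemma in the context of the invariant $\gamma$, which is conceptually useful elsewhere (e.g.\ for Calabi--Yau-type manifolds), but as a proof of this particular statement your argument is the more complete one.
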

\begin{proof}
This follows from the fact that if, for any almost complex manifold $M^{2m}$ with $c_m[M]\neq0$, we set
$$
\gamma(M):=\frac{c_1c_{m-1}[M]}{c_m[M]},
$$
we have $\gamma(M\times N)=\gamma(M)+\gamma(N)$ (see \cite[Section 3]{S}).
\end{proof}

\begin{example}\label{ex:1}
There exists a $4$ dimensional almost complex manifold manifold $(N^4,J)$ with $c_1^2[N]=0$ that admits a  $J$-preserving circle action with $12$ fixed points (note that, since $n=2$, we have $\gcd{(\frac{n}{2},12)}=1$ and $\mathcal{B}(2)=12$). Indeed, from \eqref{eq:GS3} we can just take 
$$N^4=\mathbb{C}\mathbb{P}^2\#9\overline{\mathbb{C}\mathbb{P}}^2,$$
the $9$-point blow-up of $\mathbb{C}\mathbb{P}^2$ since  
$$b_2(N)=10\quad \text{and} \quad b_0(N)=b_4(N)=1,$$ 
so that, by \eqref{eq:GS3},  
$$c_1^2[N]=10\, b_0(N) - b_2(N)=0\quad \text{and}\quad  b_0(N)+b_2(N)+b_4(N)=12.$$ 
Taking the standard Hamiltonian circle action on $\mathbb{C}\mathbb{P}^2$ (with $3$ isolated fixed points) and blowing up successively at index $2$ fixed points, we can obtain a Hamiltonian circle action on $N$  with exactly $12$ fixed points.
\end{example}

\begin{example}\label{ex:2}
For $\dim M=6$ we can take $M=S^6$ with the almost complex structure induced by a vector product in $\R^7$ and equipped with the $S^1$-action induced by the action on $\R^7=\R\oplus\mathbb{C}^3$ given by
$$
\lambda \cdot (t,z_1,z_2,z_3)=(t,\lambda^n z_1, \lambda^m z_2,\lambda^{-(n+m)}z_3), \quad \lambda\in S^1, 
$$
with $t\in \R$, $z_1,z_2,z_3\in \mathbb{C}$, $m,n\in \Z\setminus\{0\}$ and $m+m\neq0$. This action has exactly $2$ fixed points and $N_1=N_2=1$ (note that $\mathcal{B}(3)=2$). 
\end{example}

\begin{example}\label{ex:3}
In any dimension, since  we can write every even positive integer $2n\geq 4$ as
$$2n = 2(2k+3l)=4k+6l,$$ 
for some $k,l\in \mathbb{Z}_{\geq 0}$,  we can take 
$$M=(N^4)^k\times (S^6)^l,$$
where $N^4$ is the $S^1$-manifold in Example~\ref{ex:1} and $S^6$ has the action in Example~\ref{ex:2}, to obtain an example of dimension $2n$. By Lemma~\ref{lemma:salamon} this almost complex manifold satisfies 
$$c_1c_{n-1}[M]=0,$$ 
and the diagonal circle action preserves the almost complex structure  and has $2^l\times 12^k$ fixed points.

If $k=l=1$ then $\dim M=10$ and the action has a minimal number of fixed points. Indeed, it has $24$ fixed points and $\mathcal{B}(5)=24$.

If $k=0$ and $l=2$ then $\dim M=12$ and the action has exactly $4$ fixed points so it also has a minimal number of fixed points (since $\mathcal{B}(6)=4$).

If $k=0$ and $l=3$ then $\dim M=18$ and the action has $8$ fixed points which is also   a minimal number ($\mathcal{B}(9)=8$).

\end{example}

\begin{remark}
It would be very interesting to find out if there exists an $8$ dimensional almost complex manifold $(M^8,J)$ with a $J$ preserving circle action with exactly $\mathcal{B}(4)=6$ fixed points. If this example could be constructed then $M^8\times S^6$ would give us a minimal  example  with $\mathcal{B}(7)=12$ fixed points.
\end{remark}

\begin{example}
Returning to Example~\ref{ex:3}, we see that, although the $S^1$-manifolds $(N^4)^k\times (S^6)^l$ do not always have a minimal number of fixed points, $\lvert M^{S^1}\rvert=2^l\times 12^k$, is always divisible by the factors predicted in Theorems~\ref{thm:hirz} and \ref{thm:E}.

Indeed, if $n$ is even and $k\neq 0$, then $\lvert M^{S^1}\rvert$ is a multiple of $12$ and so it is divisible by all the factors predicted in Theorems~\ref{thm:hirz} and \ref{thm:E}. If $n$ is even and $k=0$ then necessarily $n\equiv 0 \pmod{3}$.  Since $n=3l$ is even, we have $l>1$, and so $\lvert M^{S^1}\rvert$ is a multiple of $4$, again divisible by all the factors predicted in Theorem~\ref{thm:hirz}.

If $n$ is odd then necessarily $l>1$. If $k\neq 0$ then $\lvert M^{S^1}\rvert$ is a multiple of $24$ and so it is divisible by all the factors predicted in Theorems~\ref{thm:hirz} and \ref{thm:E}. If $k=0$ then necessarily $n\equiv 0 \pmod{3}$. We then have $n=3l$ and $2^l$ fixed points. If $l=1$ then $n\equiv 3 \pmod 8$ and  $\lvert M^{S^1}\rvert$ is divisible by $2$ (the factor predicted in Theorem~\ref{thm:hirz}); if $l=2$ then $n\equiv 6 \pmod 8$  and  $\lvert M^{S^1}\rvert$ is divisible by $4$, as predicted in Theorem~\ref{thm:hirz}; if $l\geq 3$ then  $\lvert M^{S^1}\rvert$ is a multiple of  $8$ and so it is divisible by all the factors given by Theorem~\ref{thm:hirz}.
%
\end{example}

\section{Final remarks} \label{fr}

The lower bound for the number of fixed points in Proposition~\ref{prop} follows from the fact that the momentum map $\mu$ 
is a Morse-Bott function, whose set of critical points 
$\operatorname{Crit}(\mu)$ is a submanifold of $M$, and coincides with the
fixed point set of the action. 
Thus, if it is not zero dimensional,  there are infinitely
many critical points of $\mu$ and the result
is obvious. If it is
zero dimensional, then $\mu$ is a perfect Morse function
(i.e.\ the Morse inequalities are equalities) because of the
following classical result: \emph{If 
$f$ is a Morse function on a compact and connected
manifold whose critical points  have only even 
indices, then it is a perfect Morse function} 
 \cite[Corollary 2.19 on page 52]{Nicolaescu2007}.

Let $N_k(\mu)$ be the number of critical points of $\mu$ of index $k$. The total number
of critical points of $\mu$ is
$$\sum_{k=0}^{2n}N_k(\mu) = 
\sum_{k=0}^{2n}{\rm b}_{k}(M),$$ where 
${\rm b}_k(M):=\dim\left({\rm H}^{k}(M, \mathbb{R}) \right)$ is the $k$th Betti number of $M$. 
The classes $[\omega^k]$ are nontrivial in
${\rm H}^{2k}(M, \mathbb{R})$ for $k=0, \ldots, n$,
so ${\rm b}_{2k}(M) \geq 1$,
and hence the number of 
critical points of $\mu$ is at least $n+1$.

One can try to use Theorem \ref{main} below to
deduce a result analogous to Proposition~\ref{prop} for circle valued 
momentum maps by replacing the Morse inequalities by the
Novikov inequalities (see \cite[Chapter 11, Proposition 2.4]{Pajitnov2006}, 
\cite[Theorem 2.4]{Farber2004}), if all the critical
points of $\mu$ are non-degenerate.

\begin{theorem}[McDuff, '88] \label{main}
Let the circle $S^1$ act 
symplectically on a compact connected symplectic 
manifold $(M, \sigma)$. 
Then either the action admits a standard 
momentum map or, if not, there
exists a $S^1$-invariant 
symplectic form $\omega$ on $M$   that 
admits a circle valued momentum map 
$\mu: M \rightarrow S^1$. 
Moreover, $\mu$ is a Morse-Bott-Novikov function 
and each connected component of 
$M^{S^1} = \operatorname{Crit}(\mu)$ has even index. 
If $ \sigma$  is
integral, then $ \omega= \sigma$. 
\end{theorem}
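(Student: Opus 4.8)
The plan is to construct the circle-valued momentum map directly from the contraction of a suitable $S^1$-invariant symplectic form with the generating vector field, the only real work being to arrange that this contraction has integral periods.

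Write $\mathcal{X} = \mathcal{X}_M$ for the vector field generating the $S^1$-action and set $\alpha := \iota_{\mathcal{X}}\sigma$. Since the action preserves $\sigma$ one has $L_{\mathcal{X}}\sigma = 0$, so Cartan's formula gives $d\alpha = L_{\mathcal{X}}\sigma - \iota_{\mathcal{X}}\, d\sigma = 0$, and $\alpha$ is $S^1$-invariant. If $[\alpha] = 0$ in $H^1(M;\R)$, then $\alpha = -d\mu$ for some smooth $\mu\colon M\to\R$ which, after averaging over $S^1$, may be taken invariant; this is exactly a standard momentum map, so the first alternative holds. The content of the theorem is therefore the case $[\alpha]\neq 0$, and the goal is to produce an $S^1$-invariant symplectic form $\omega$ with $[\iota_{\mathcal{X}}\omega]$ in the image of $H^1(M;\Z)\to H^1(M;\R)$: for such a form a closed $1$-form with integral periods is $\mu^*(d\theta)$ for a smooth $\mu\colon M\to\R/\Z = S^1$, and this $\mu$ is the required circle-valued momentum map.

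To build $\omega$, I would first observe that if $\omega' = \omega + d\beta$ with $\beta$ an invariant $1$-form then $\iota_{\mathcal{X}}\omega' = \iota_{\mathcal{X}}\omega - d(\iota_{\mathcal{X}}\beta)$, so the class $[\iota_{\mathcal{X}}\omega]$ depends only on $[\omega]\in H^2(M;\R)$; this yields a linear map $L\colon H^2(M;\R)\to H^1(M;\R)$. The key point is that $L$ is defined over $\Z$: pairing $L[\omega]$ with a loop $\gamma$ gives $\int_{S^1\times\gamma}\omega$, the integral of $\omega$ over the $2$-cycle swept out by $\gamma$ under the action (here using that one may take $\omega$ invariant, which changes nothing since $L$ depends only on $[\omega]$), a manifestly integral operation on $H^2(M;\Z)\to\operatorname{Hom}(H_1(M;\Z),\Z)$. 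Hence $W := L(H^2(M;\R))$ is a rational subspace, rational classes are dense in $W$, and $L$ is an open map onto $W$. Since the set of classes admitting an $S^1$-invariant symplectic representative is open around $[\sigma]$ (any class $C^0$-close to $[\sigma]$ has a representative $C^0$-close to $\sigma$, still nondegenerate, and averaging over the compact group $S^1$ keeps it nondegenerate, invariant, and in the same class), I can choose an invariant symplectic form $\omega'$ near $\sigma$ with $[\iota_{\mathcal{X}}\omega']$ a nonzero rational class, and then clear denominators by replacing $\omega'$ with a large integer multiple to obtain $\omega$ with $[\iota_{\mathcal{X}}\omega]$ nonzero and integral. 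When $[\sigma]$ is already integral, $L[\sigma] = [\iota_{\mathcal{X}}\sigma]$ is integral because $L$ is defined over $\Z$, so no perturbation is needed and one may take $\omega = \sigma$, which gives the final assertion of the theorem.

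Finally, the Morse--Bott--Novikov statement: the critical set of $\mu$ is $\{\iota_{\mathcal{X}}\omega = 0\}$, which by nondegeneracy of $\omega$ equals $\{\mathcal{X}=0\} = M^{S^1}$. A component $F$ of $M^{S^1}$ is a symplectic submanifold, and by the equivariant normal-form theorem the action on a symplectic normal slice is a linear $S^1$-representation splitting into $2$-planes rotated with nonzero integer weights $w_1,\dots,w_\ell$; in the corresponding Darboux coordinates $\mathcal{X}$ is the rotation field and one computes $\mu = \mu(F) + \sum_j \tfrac{w_j}{2}(x_j^2+y_j^2)$ locally. Thus $F$ is a nondegenerate critical submanifold and its negative normal bundle has rank $2\,\#\{\,j : w_j<0\,\}$, so every component of $\operatorname{Crit}(\mu)$ has even index; this is the same computation as in the Hamiltonian case, now purely local because $\mu$ is circle-valued. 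The main obstacle is the construction of $\omega$ in the previous paragraph --- specifically the rationality of the operator $[\omega]\mapsto[\iota_{\mathcal{X}}\omega]$ combined with the openness of the invariant symplectic cone, which together make the perturbation possible without destroying nondegeneracy.
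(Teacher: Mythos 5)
The paper does not prove this statement: it is quoted as a theorem of McDuff, and the text explicitly defers to Pelayo--Ratiu [PR12, Sections 3--4] for a detailed proof. Your argument is essentially that standard proof --- the integrality of the map $[\omega]\mapsto[\iota_{\mathcal{X}}\omega]$ via the $2$-cycles swept out by loops, perturbation of $[\sigma]$ to a class with rational flux followed by integer rescaling, and the local normal form at fixed components giving even indices --- and it is correct in all essentials.
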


The number
of critical points of the circle-valued momentum
map $\mu$  in Theorem~\ref{main} is then $\sum_{k=0}^{2n} N_k(\mu)$. This number is at least
$$
\sum_{k=0}^{2n} \Big(\hat{{\rm b}}_k(M)+
\hat{{\rm q}}_k(M)+\hat{{\rm q}}_{k-1}(M) \Big),
$$
where  $\hat{{\rm b}}_k(M)$ is the rank of the 
$\mathbb{Z}((t))$-module 
${\rm H}_k(\widetilde{M},\mathbb{Z}) 
\otimes_{\mathbb{Z}[t, t^{-1}]}\mathbb{Z}((t))$, 
$\hat{{\rm q}}_k(M)$ is the torsion number of this 
module, and $\widetilde{M}$ is the pull back by 
$\mu: M \rightarrow \mathbb{R}/\mathbb{Z}$ of the 
principal $\mathbb{Z}$-bundle $t \in \mathbb{R} 
\mapsto [t] \in \mathbb{R}/\mathbb{Z}$. 
Unfortunately, this lower bound can be zero.
We refer to \cite[Sections 3 and 4]{PR12} for a detailed proof of
Theorem~\ref{main} and \cite[Remark 6]{PR12} for further details.
\smallskip
\medskip

\newpage
\appendix
\section{Tables}
\renewcommand{\arraystretch}{1.3}

\begin{table}[h!]
\begin{tabular}{| c | | c  | c  | c | l |}
\hline
{\color{blue!60!black} $n$}  & $m$ & $r=\gcd{(m,12)}$  & $\mathcal{B}(n)$ &  \\ \hline  \hline

{\color{blue!60!black} ${\bf 26}$} & $13$ & $1$ & $12$ &\\ \hline 

{\color{blue!60!black} ${\bf 20}$} & $10$ & $2$ & $6$ &  $20\not \equiv 28 \pmod{32}$\\\hline 

{\color{blue!60!black} ${\bf 28}$} & $14$ & $2$ & $12$ &  \\ \hline

{\color{blue!60!black} ${\bf 54}$} & $27$ & $3$ & $4$ &  $\frac{n}{6}= 3^2$ \\ \hline

{\color{blue!60!black} ${\bf 18}$} &$9$ & $3$ & $8$ & $\frac{n}{6}=3$ \\ \hline

{\color{blue!60!black} ${\bf 32}$} &$16$ & $4$ & $3$ & $\frac{n}{2}=4^2$  \\ \hline

\multirow{2}{*}{\color{blue!60!black} ${\bf 40}$} &\multirow{2}{*}{$20$} & \multirow{2}{*}{$4$} & \multirow{2}{*}{$6$} & $\frac{n}{2}$ is not a square and \\ & & & & $n=4\cdot 10\neq 4^k (8t+7), \forall k,t\in \Z_{\geq 0}$  \\ \hline

{\color{blue!60!black} ${\bf 112}$} & $56$ & $4$ & $9$ &  $\frac{n}{2}=56$ is not a square and $n=4^2\cdot 7$ \\ \hline

{\color{blue!60!black} ${\bf 108}$} & $54$ & $6$ & $2$ &  $\frac{n}{12}=3^2$  \\ \hline

{\color{blue!60!black} ${\bf 60}$} & $30$ & $6$ & $4$ &  $\frac{n}{12}=5$ is not a square and $\frac{n}{6}=2\cdot 5$ \\ \hline

\multirow{2}{*}{\color{blue!60!black} ${\bf 180}$} & \multirow{2}{*}{$90$}& \multirow{2}{*}{$6$} & \multirow{2}{*}{$6$} &  $\frac{n}{12}=15$ is not a square, $\frac{n}{6}=2\cdot 3 \cdot 5$ \\ & & & & and $n = 20+ 5 \cdot 32$ \\ \hline

\multirow{2}{*}{\color{blue!60!black} ${\bf 252}$} & \multirow{2}{*}{$126$}& \multirow{2}{*}{$6$} & \multirow{2}{*}{$8$} &  $\frac{n}{12}=21$ is not a square, $\frac{n}{6}=2\cdot 3 \cdot 7$ \\ & & & & and $n = 28+ 7 \cdot 32$ \\ \hline

{\color{blue!60!black} ${\bf 48}$} & $24$ & $12$ & $2$ &  $\frac{n}{12}=2^2$  \\ \hline

{\color{blue!60!black} ${\bf 72}$} & $36$ & $12$ & $3$ &  $\frac{n}{12}=6$ is not a square and $\frac{n}{2}=6^2$  \\ \hline

{\color{blue!60!black} ${\bf 24}$} & $12$ & $12$ & $4$ &  $\frac{n}{12}$, $\frac{n}{2}$ are not  squares and $\frac{n}{6}=2^2$  \\ \hline

\multirow{2}{*}{\color{blue!60!black} ${\bf 144}$} & \multirow{2}{*}{$72$}& \multirow{2}{*}{$12$} & \multirow{2}{*}{$6$} &  $\frac{n}{12}=12$ and $\frac{n}{2}=72$ are not  squares,  \\ & & & & $\frac{n}{6}=2^3\cdot 3$ and  $n = 4^2(8+1)$ \\ \hline

\multirow{2}{*}{\color{blue!60!black} ${\bf 1008}$} & \multirow{2}{*}{$504$}& \multirow{2}{*}{$12$} & \multirow{2}{*}{$7$} &  $\frac{n}{12}=84$, $\frac{n}{2}=504$ are not  squares,  \\ & & & & $\frac{n}{6}=2^3\cdot 3\cdot 7$ and  $n = 4^2(8\cdot 7+ 7)$ \\ \hline
\end{tabular}
\vspace{.3cm}

\caption{Examples that illustrate all possible lower bounds of $\lvert M^{S^1}\rvert$ listed in Theorem~\ref{thm:B} when $n=\frac{1}{2}\dim M$ is even (by increasing order of $r$)}
\end{table}


\newpage
\renewcommand{\arraystretch}{1.3}

\begin{table}[h!]
\begin{tabular}{| c | | c  | c  | c | l |}
\hline
{\color{blue!60!black} $n$}  & $m$ & $r=\gcd{(m-1,12)}$  & $\mathcal{B}(n)$ & \\ \hline  \hline

{\color{blue!60!black} ${\bf 39}$} & $19$ & $6$ & $4$ & $\frac{n}{3}=13$ \\ \hline 

{\color{blue!60!black} ${\bf 63}$} & $31$ & $6$ & $8$ &  $\frac{n}{3}=3\cdot 7$ \\ \hline 

{\color{blue!60!black} ${\bf 75}$} & $37$ & $12$ & $2$ & $\frac{n-3}{24}=\frac{2\cdot 3}{2}$\\ \hline

\multirow{2}{*}{{\color{blue!60!black} ${\bf 51}$}}& \multirow{2}{*}{$25$} & \multirow{2}{*}{$12$} & \multirow{2}{*}{$4$} &  $\frac{n-3}{24}= 2$ is not triangular \\ & & & & and $\frac{n}{3}=17$ \\ \hline

\multirow{2}{*}{{\color{blue!60!black} ${\bf 99}$}}& \multirow{2}{*}{$49$} & \multirow{2}{*}{$12$} & \multirow{2}{*}{$6$} &  $\frac{n-3}{24}= 4$ is not triangular \\ & & & & and $\frac{n}{3}=3\cdot 11$ \\ \hline

\end{tabular}
\vspace{.3cm}

\caption{Examples that illustrate the possible lower bounds of $\lvert M^{S^1}\rvert$  when $n=\frac{1}{2}\dim M$ is odd,  for nontrivial cases listed in theorem~\ref{thm:C} ($r=6$  or $12$) }
\end{table}
\renewcommand{\arraystretch}{1}

\renewcommand{\arraystretch}{1}

\medskip\noindent
 \end{document}